\newcounter{case}
\newcounter{subcase}[case]
\DeclarePairedDelimiter\ceil{\lceil}{\rceil}
\DeclarePairedDelimiter\floor{\lfloor}{\rfloor}
\newtheorem{theorem}{Theorem}[section]
\newtheorem{lemma}[theorem]{Lemma}
\newtheorem{conjecture}[theorem]{Conjecture}
\newtheorem{corollary}[theorem]{Corollary}
\newtheorem{claim}{Claim}
\theoremstyle{remark}
\newtheorem{remark}[theorem]{Remark}
\newtheorem{definition}[theorem]{Definition}
\newtheorem{example}[theorem]{Example}
\renewcommand{\geq}{\geqslant}
\renewcommand{\leq}{\leqslant}
\renewcommand{\ge}{\geqslant}
\renewcommand{\le}{\leqslant}
\newcommand{\kn}{K_{n,n}}
\newcommand{\ekn}{E(K_{n,n})}
\newcommand{\sign}{\text{sign}}
\def\cref#1{Corollary~$\ref{#1}$}
\newcommand{\cc}{\mathcal{C}}
\newcommand{\ci}{\mathcal{I}}
\newcommand{\cf}{\mathcal F}
\newcommand{\cm}{\mathcal M}
\newcommand{\cn}{\mathcal N}
\newcommand{\cv}{\mathcal V}
\renewcommand{\geq}{\geqslant}
\renewcommand{\leq}{\leqslant}
\renewcommand{\ge}{\geqslant}
\renewcommand{\le}{\leqslant}
\def\cref#1{Corollary~$\ref{#1}$}
\title{Fair representation by independent sets}
\author{
Ron Aharoni \thanks{Research supported by  BSF grant no. 2006099, an ISF grant, and by the Discount Bank Chair at the Technion.}\\
\small Department of Mathematics, Technion, Haifa 32000, Israel\\
\small raharoni@gmail.com\\
\and Noga Alon \thanks{Research supported by a USA-Israeli
BSF grant, by an ISF grant, and by the Israeli I-Core program.}\\
\small Sackler School of Mathematics and Blavatnik School of
Computer Science, Tel Aviv University, Tel Aviv, Israel\\
\small nogaa@tau.ac.il\\
\and Eli Berger \thanks{Research supported by  BSF grant no. 2006099 and by an ISF grant.}\\
\small Department of Mathematics, Haifa University, Haifa 31999, Israel\\
\small berger@math.haifa.ac.il\\
\and Maria Chudnovsky \thanks{Research
partially supported by BSF Grant No. 2020345.}\\
\small Department of Mathematics, Princeton University, Princeton, NJ 08544, USA\\
\small mchudnov@math.princeton.edu\\
\and Dani Kotlar\\
\small Department of Computer Science, Tel-Hai College, Upper Galilee, Israel\\
\small dannykot@telhai.ac.il\\
\and Martin Loebl \thanks{Gratefully acknowledges support by the Czech Science Foundation GACR, contract number P202-13-21988S.}\\
\small Dept. of Applied Mathematics, Charles University, Praha, Malostranske n.25, 118 00 Praha 1 Czech Republic\\
\small loebl@kam.mff.cuni.cz\\
\and Ran Ziv\\
\small Department of Computer Science, Tel-Hai College, Upper Galilee, Israel\\
\small ranziv@telhai.ac.il\\
}
\date{}
\begin{document}

\maketitle

\begin{abstract}

For a hypergraph $H$ let $\beta(H)$ denote the minimal number of edges from $H$ covering $V(H)$. An edge $S$ of $H$  is said to represent {\em fairly} (resp. {\em almost fairly})  a partition  $(V_1,V_2, \ldots, V_m)$ of  $V(H)$
 if  $|S\cap V_i|\ge \floor*{\frac{|V_i|}{\beta(H)}}$ (resp. $|S\cap V_i|\ge \floor*{\frac{|V_i|}{\beta(H)}}-1$) for all $i \le m$.
 In matroids any partition of $V(H)$ can be represented  fairly by some independent set.
We look for classes of hypergraphs $H$ in which any partition of $V(H)$ can be represented almost fairly by some edge.
 We show that this is true when $H$ is the set of independent sets in a path, and conjecture that it is true when $H$ is the set of matchings in $K_{n,n}$. We  prove that partitions of $E(K_{n,n})$ into three sets can be represented almost fairly. The methods of proofs are topological.\\
\bigskip
Keywords: matchings, independent sets, fair representation, topological connectivity.
\end{abstract}

\section{Introduction}

\subsection{Terminology and main theme}
A hypergraph $\cc$ is called a {\em simplicial complex} (or just a ``complex'') if it is  closed
down, namely $e \in \cc$ and $f \subseteq e$ imply $f \in \cc$. We denote by $V(\cc)$ the vertex set of $\cc$, and by $E(\cc)$ its edge set. Let  $\beta(\cc)$ be the minimal number of edges (``simplices'') of $\cc$ whose union is $V(\cc)$.
For any hypergraph $H$ we denote by $\Delta(H)$ the maximal degree of a vertex in $H$.

We say  that $S\in \cc$ represents a set $A$ of vertices {\em fairly} if $|S \cap A| \ge \floor*{ \frac{|A|}{\beta(\cc)}}$, and that it represents $A$ {\em almost fairly} if $|S \cap A| \ge \floor*{ \frac{|A|}{\beta(\cc)}}-1$.
We say that $S$ represents fairly (almost fairly) a collection of sets if it does so to each set in the collection, reminiscent of the way a parliament represents fairly the voters of the different parties.

Clearly, every set $A$ is fairly represented by some edge $S \in \cc$.
The aim of this paper is to study complexes $\cc$ in which for every partition $V_1, \ldots ,V_m$ of $V(\cc)$ there is an edge $S \in \cc$ representing all $V_i$'s fairly, or almost fairly.

In matroids, fair representation is always possible. The following can be proved, for example, by the use of Edmonds' matroids intersection theorem.

\begin{theorem}\label{2delta}
If $\cm$ is a matroid then  for every partition $V_1, \ldots ,V_m$ of $V(\cm)$ there exists a set $S\in \cm$ satisfying $|S\cap V_i|\ge \floor* {\frac{|V_i|}{\beta(\cc)}}$ for all $i$.
\end{theorem}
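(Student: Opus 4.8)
The plan is to phrase the statement as an instance of Edmonds' matroid intersection theorem. Write $k=\beta(\cm)$ and $d_i=\floor*{\frac{|V_i|}{k}}$. If $\cm$ has a loop then no independent set can contain it, so $\beta(\cm)=\infty$, every $d_i=0$, and the empty set works; hence we may assume $\cm$ is loopless and let $r$ be its rank function. The first ingredient is the matroid covering theorem (a consequence of matroid union / Nash--Williams), which gives $\beta(\cm)=\max\{\lceil |A|/r(A)\rceil : \emptyset\neq A\subseteq V(\cm)\}$; in particular $|A|\le k\,r(A)$ for every $A\subseteq V(\cm)$. The second ingredient is an auxiliary partition matroid $\cn$ on $V=V(\cm)$ whose independent sets are the sets $T$ with $|T\cap V_i|\le d_i$ for all $i$; its rank function is $r_\cn(B)=\sum_i\min(|B\cap V_i|,d_i)$. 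Note that a common independent set $S$ of $\cm$ and $\cn$ with $|S|=\sum_i d_i$ is exactly what we need: it is independent in $\cm$, and the constraints $|S\cap V_i|\le d_i$ together with $\sum_i|S\cap V_i|=\sum_i d_i$ force $|S\cap V_i|=d_i$ for every $i$.

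Next I would apply Edmonds' theorem: the maximum size of a common independent set of $\cm$ and $\cn$ equals $\min_{A\subseteq V}\big(r(A)+r_\cn(V\setminus A)\big)$. Taking $A=\emptyset$ gives $r_\cn(V)=\sum_i\min(|V_i|,d_i)=\sum_i d_i$, so this minimum is at most $\sum_i d_i$; hence it suffices to show $r(A)+r_\cn(V\setminus A)\ge\sum_i d_i$ for every $A$, and then the minimum is exactly $\sum_i d_i$ and a maximiser $S$ is the required set.

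To verify the inequality, write $a_i=|V_i\cap A|$, so that $r_\cn(V\setminus A)=\sum_i\min(|V_i|-a_i,d_i)$ and the desired bound rearranges to $r(A)\ge\sum_i\max(0,\,a_i-|V_i|+d_i)$. Using $d_i\le |V_i|/k$ and $0\le a_i\le|V_i|$, an elementary estimate gives $\max(0,\,a_i-|V_i|+d_i)\le a_i/k$ for each $i$ (when the first argument is nonpositive this is trivial, and otherwise $a_i-|V_i|+d_i\le a_i-|V_i|(1-\tfrac1k)\le a_i/k$ since $a_i\le|V_i|$). Summing over $i$ and invoking the covering bound $\sum_i a_i=|A|\le k\,r(A)$ yields $\sum_i\max(0,\,a_i-|V_i|+d_i)\le |A|/k\le r(A)$, which completes the argument.

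I expect the main obstacle to be a conceptual rather than computational one: identifying that the correct auxiliary structure is the \emph{upper-bound} partition matroid with capacities $d_i$ (so that matroid intersection, not matroid union, is the right tool), and recognising that the combinatorial heart of the matter is precisely the inequality $\max(0,\,a_i-|V_i|+d_i)\le a_i/k$, which is where the floor function $\floor*{|V_i|/k}$ enters. A secondary point to get right is the bookkeeping at the boundary (loops, empty parts, the case $k=1$) so that the reduction to the covering theorem and to Edmonds' theorem is fully legitimate.
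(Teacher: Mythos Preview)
Your proof is correct and follows precisely the route the paper indicates: the paper does not actually spell out a proof of Theorem~\ref{2delta} but only remarks that it ``can be proved, for example, by the use of Edmonds' matroids intersection theorem,'' which is exactly what you do. Your use of the Nash--Williams covering formula to obtain $|A|\le k\,r(A)$, together with the partition matroid with capacities $d_i$ and the verification of the Edmonds min--max condition via the pointwise bound $\max(0,a_i-|V_i|+d_i)\le a_i/k$, is the natural way to flesh out the paper's one-line hint.
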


Classical examples which do not always admit fair representation are complexes of the form $\ci(G)$, the complex of independent sets in a graph $G$.
 In this case $\beta(\ci(G))=\chi(G)$, the chromatic number of $G$, which by Brooks' theorem is at most $\Delta(G)+1$.
  Indeed, there are classes of graphs for which the correct proportion of representation is $\frac{1}{\Delta(G)+1}$:

 \begin{theorem}\label{chordal}\cite{abz}
  If $G$ is chordal and $V_1, \ldots, V_m$ is a partition of its vertex set, then there exists an independent set
  of vertices $S$ such that $|S \cap V_i|\ge \frac{|V_i|}{\Delta(G)+1}$ for all $i \le m$.
   \end{theorem}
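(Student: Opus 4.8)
The plan is to reduce the statement to a single topological estimate on independence complexes of chordal graphs, and then prove that estimate by induction along a perfect elimination order. Throughout write $d=\Delta(G)+1$ and, for each $i$, set $k_i=\floor*{|V_i|/d}$. Since $k_i d\le|V_i|$, inside each $V_i$ we may choose $k_i$ pairwise disjoint sets $F^i_1,\dots,F^i_{k_i}$, each of size exactly $d$, and collect all of them into one family $\cf$ of $N=\sum_i k_i$ pairwise disjoint $d$-element subsets of $V(G)$. If an independent set $S\in\ci(G)$ meets every member of $\cf$, then $|S\cap V_i|\ge\sum_j|S\cap F^i_j|\ge k_i=\floor*{|V_i|/d}$ for all $i$, which is exactly the conclusion we want. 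So it suffices to find an independent set meeting every member of $\cf$.

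For this I would invoke the topological Hall theorem for independent transversals: if $\cf=\{F_t\}_{t\in T}$ is a family of vertex subsets of a graph $G$ such that $\eta\big(\ci(G[\bigcup_{t\in T'}F_t])\big)\ge|T'|$ for every $T'\subseteq T$, then some independent set of $G$ meets every $F_t$. Here $\eta(\cc)$ denotes the topological connectivity of a complex $\cc$, normalised so that $\eta(\cc)\ge k$ whenever $\cc$ is $(k-2)$-connected (and $\eta(\cc)=\infty$ for a contractible $\cc$). Given a subfamily indexed by $T'$, its union $W$ satisfies $|W|=|T'|\,d$ because the members of $\cf$ are disjoint and of size $d$, while $G[W]$ is again chordal, being an induced subgraph of $G$, with $\Delta(G[W])\le\Delta(G)=d-1$. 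Hence the Hall condition for $\cf$ would follow at once from the estimate $\eta(\ci(H))\ge|V(H)|/(\Delta(H)+1)$ for every chordal graph $H$, applied with $H=G[W]$.

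It remains to prove that $\eta(\ci(H))\ge|V(H)|/(\Delta(H)+1)$ for chordal $H$, and this is the heart of the matter. I would induct on $|V(H)|$: pick a simplicial vertex $v$ of $H$ and use the standard deletion--link inequality $\eta(\ci(H))\ge\min\{\eta(\ci(H-v)),\ \eta(\ci(H-N[v]))+1\}$, which comes from the Mayer--Vietoris sequence for the decomposition of $\ci(H)$ into its deletion at $v$ and the star of $v$ (a cone, hence contractible), whose intersection is the link $\ci(H-N[v])$. Both $H-v$ and $H-N[v]$ are chordal with maximum degree at most $\Delta(H)$, so the induction hypothesis applies to each. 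Writing $|V(H)|=q(\Delta(H)+1)+r$ with $0\le r\le\Delta(H)$, a short computation shows that the deletion term $\eta(\ci(H-v))$ already meets the target $\lceil|V(H)|/(\Delta(H)+1)\rceil$ whenever $r\neq1$; the only delicate case is $r=1$, where one instead uses that, $v$ being simplicial, $N[v]$ is a clique and hence $|N[v]|\le\Delta(H)+1$, so $H-N[v]$ loses at most $\Delta(H)+1$ vertices and the link term $\eta(\ci(H-N[v]))+1$ supplies the missing unit. This off-by-one accounting in the case $r=1$, together with the small base cases, is the only genuinely delicate point; everything else is assembling well-documented topological facts. (A slightly shorter route would replace the explicit transversal by a direct appeal to the weighted topological Hall theorem of Aharoni--Berger--Ziv with weights $k_i$, but the version above isolates the one inequality $\eta(\ci(H))\ge|V(H)|/(\Delta(H)+1)$ in which all the content resides.)
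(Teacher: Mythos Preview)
Your reduction is exactly the one the paper indicates: it cites \cite{abz} for the bound $\eta(\ci(G))\ge |V(G)|/(\Delta(G)+1)$ when $G$ is chordal, and then invokes the topological Hall theorem of \cite{ah}. So the architecture of your argument matches the paper's.

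The gap is in your inductive proof of the connectivity bound. The Mayer--Vietoris (Meshulam) inequality you wrote,
\[
\eta(\ci(H))\ \ge\ \min\bigl\{\eta(\ci(H-v)),\ \eta(\ci(H-N[v]))+1\bigr\},
\]
is a \emph{minimum}: to conclude $\eta(\ci(H))\ge\lceil n/d\rceil$ you need \emph{both} terms to reach that target, not just one. In your ``delicate'' case $n=qd+1$ the link term does give $q+1$, but the deletion term gives only $\lceil (n-1)/d\rceil=q$, so the minimum is $q$, one short. This is not a phantom problem: take $H=P_4$ with $d=\Delta+1=3$. The simplicial endpoint $v$ has $H-v=P_3$, and $\ci(P_3)\simeq S^0$ so $\eta(\ci(H-v))=1$, while the target is $\lceil 4/3\rceil=2$. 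Your inequality yields only $\eta(\ci(P_4))\ge 1$, even though in fact $\ci(P_4)$ is contractible. So the induction as written does not close.

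A clean repair is to switch to the \emph{edge} form of Meshulam's inequality and induct on $|E(H)|$. If $H$ has an isolated vertex then $\ci(H)$ is a cone and we are done; otherwise pick a simplicial vertex $v$, a neighbour $u\in N(v)$, and set $e=uv$. Because $v$ is simplicial, $N[v]\subseteq N[u]$, so $N[e]=N[u]$ and $|N[e]|\le d$. Moreover $H-e$ is still chordal: any induced cycle of length $\ge 4$ in $H-e$ would have $e$ as its only chord in $H$, but the two neighbours of $v$ on that cycle lie in the clique $N(v)$ and furnish a second chord. Now
\[
\eta(\ci(H))\ \ge\ \min\bigl\{\eta(\ci(H-e)),\ \eta(\ci(H-N[e]))+1\bigr\},
\]
and by induction on the number of edges both terms are at least $\lceil n/d\rceil$: the first because $H-e$ has the same vertex set, the second because $|V(H-N[e])|\ge n-d$. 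This closes the induction without any residue-class exception.
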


However, in general graphs this is not always true. The following theorem of Haxell \cite{penny} pinpoints the correct parameter.

\begin{theorem}\label{haxell}
If $\cv=(V_1,V_2, \ldots,V_m)$ is a partition of the vertex set of a graph $G$, and if $|V_i|\ge  2\Delta(G)$ for all $i \le m$, then there exists a set $S$, independent in $G$, intersecting all $V_i$'s.
\end{theorem}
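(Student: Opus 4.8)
The plan is to restate the theorem as the existence of an \emph{independent transversal} (independent system of representatives): one wants a choice $v_i\in V_i$, $i\le m$, with $\{v_1,\dots,v_m\}$ independent in $G$. The natural engine, consistent with the methods of this paper, is the topological version of Hall's theorem (Aharoni and Haxell; see also Aharoni, Berger and Ziv): if for every $I\subseteq[m]$ the independence complex $\mathcal I\bigl(G[\,\bigcup_{i\in I}V_i\,]\bigr)$ is $(|I|-2)$-connected --- and here homological connectivity already suffices --- then $G$ has an independent transversal. So it is enough to verify this connectivity hypothesis from the assumption $|V_i|\ge 2\Delta(G)$.

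For this I would invoke Meshulam's bound on the connectivity of independence complexes: for every graph $H$ one has $\tilde H_j\bigl(\mathcal I(H)\bigr)=0$ for all $j\le \dfrac{|V(H)|}{2\Delta(H)}-2$ (and $\mathcal I(H)$ is a simplex, hence contractible, when $\Delta(H)=0$). Apply this with $H=H_I:=G[\,\bigcup_{i\in I}V_i\,]$. As $H_I$ is an induced subgraph of $G$ we have $\Delta(H_I)\le\Delta(G)$, while $|V(H_I)|=\sum_{i\in I}|V_i|\ge 2\Delta(G)\,|I|$; hence $|V(H_I)|/2\Delta(H_I)\ge|I|$, so $\tilde H_j(\mathcal I(H_I))=0$ for every $j\le|I|-2$, i.e.\ $\mathcal I(H_I)$ is $(|I|-2)$-connected. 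This is exactly the hypothesis of the topological Hall theorem, which then delivers the independent transversal, that is, the required set $S$. (When $\Delta(G)=0$ the graph is edgeless and one picks any vertex from each $V_i$, so that degenerate case is immediate.)

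The content of the argument lies entirely in the two imported results. Meshulam's estimate is proved by a link--deletion (Mayer--Vietoris) induction: for a vertex $v$, writing $\mathcal I(H)$ as the union of the closed star $v*\operatorname{lk}(v)$ --- a cone, hence acyclic --- and $\mathcal I(H\setminus v)$, with intersection $\operatorname{lk}_{\mathcal I(H)}(v)=\mathcal I(H\setminus N_H[v])$, one obtains $\eta(\mathcal I(H))\ge\min\{\eta(\mathcal I(H\setminus v)),\,\eta(\mathcal I(H\setminus N_H[v]))+1\}$. The second branch removes a whole closed neighbourhood and so carries ample slack, but the first branch removes a single vertex and hence erodes the bound; orchestrating the induction so that this erosion is always recouped --- exploiting base cases such as ``an isolated vertex makes $\mathcal I(H)$ a cone'' together with a judicious choice of the vertex $v$ --- is the real obstacle, and it is precisely what makes the constant $2$ in the hypothesis $|V_i|\ge 2\Delta(G)$ sharp. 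The topological Hall theorem is the other black box; it is proved by a nerve-type / ``connectivity with deficiency'' argument and may be used off the shelf. (A purely combinatorial route also exists: Haxell's original proof passes to a minimal sub-collection of the $V_i$ admitting no independent transversal and reaches a contradiction by an alternating-tree augmentation on partial transversals, the bound $|V_i|\ge 2\Delta(G)$ powering each extension; the topological route above is the shorter one.)
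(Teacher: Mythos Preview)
The paper does not prove this theorem: it is quoted as a result of Haxell, with earlier weaker constants due to Alon, so there is no argument in the paper to compare against. Your derivation via the topological Hall theorem together with the connectivity bound $\eta(\mathcal I(H))\ge |V(H)|/(2\Delta(H))$ is correct and is a standard alternative route to Haxell's theorem; it is very much in the spirit of the paper, which uses exactly this machinery elsewhere (the Aharoni--Haxell topological Hall theorem and Meshulam-type connectivity estimates for independence complexes).

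One small remark on your sketch of the connectivity bound: the clean induction uses Meshulam's \emph{edge} recursion,
\[
\eta(\mathcal I(H))\ \ge\ \min\bigl\{\eta(\mathcal I(H-e)),\ \eta(\mathcal I(H-N[e]))+1\bigr\},
\]
rather than the vertex version you wrote. With the edge version the bound $\eta(\mathcal I(H))\ge |V(H)|/(2\Delta)$ follows immediately by induction on $|E(H)|$: deleting an edge keeps $|V|$ fixed and does not raise $\Delta$, while deleting $N[e]$ removes at most $2\Delta$ vertices. The vertex recursion you stated is also a valid inequality, but --- as you yourself flag --- the first branch loses a vertex without any compensating gain, so it does not close this particular induction. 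Either switch to the edge formulation or simply cite the bound as a black box; the rest of your argument is fine.
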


This was an improvement over earlier results of Alon, who proved the
same with $25\Delta(G)$ \cite{Al1} and then with $2e\Delta(G)$ \cite{Al2}. The result is sharp, as shown in \cite{yuster,jin,szabotardos}.

\begin{corollary}
If the vertex set $V$ of a graph $G$ is partitioned into independent sets $V_1, V_2, \ldots ,V_m$ then there exists an independent  subset $S$ of $V$,
satisfying $|S \cap V_i| \ge \floor*{ \frac{|V_i|}{2\Delta(G)}}$ for every $i \le m$.
\end{corollary}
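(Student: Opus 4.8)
The plan is to deduce this directly from \tref{haxell} by passing to a suitable refinement of the given partition. Write $\Delta=\Delta(G)$, and dispose first of the degenerate case $\Delta=0$: then $G$ is edgeless, $S=V$ is independent, and there is nothing to prove (so in particular we may treat $\floor*{\frac{|V_i|}{2\Delta}}$ as defined). Assuming $\Delta\ge 1$, for each index $i$ with $|V_i|\ge 2\Delta$ put $k_i=\floor*{\frac{|V_i|}{2\Delta}}\ge 1$ and split $V_i$ into pairwise disjoint blocks $V_i^1,\dots,V_i^{k_i}$, each of size at least $2\Delta$; this is possible since $2\Delta\cdot k_i\le |V_i|$, e.g.\ by making the first $k_i-1$ blocks of size exactly $2\Delta$ and letting $V_i^{k_i}$ absorb the remaining $|V_i|-2\Delta(k_i-1)\ge 2\Delta$ vertices. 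For the indices $i$ with $|V_i|<2\Delta$ we have $\floor*{\frac{|V_i|}{2\Delta}}=0$, so these classes impose no constraint; I would simply distribute their vertices arbitrarily among the blocks already constructed (if \emph{every} class has $|V_i|<2\Delta$ the statement is vacuous, so we may assume at least one block exists).

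Next I would observe that the blocks $\{V_i^j\}$ now form a partition of $V(G)$ into parts each of size at least $2\Delta(G)$, and apply \tref{haxell} to this partition to obtain an independent set $S$ with $S\cap V_i^j\neq\emptyset$ for every block $V_i^j$. The one point worth noting here is that \tref{haxell} does \emph{not} require the parts of the partition to be independent — only that each has size at least $2\Delta(G)$ — which is exactly what permits folding the leftover vertices of the small classes into existing blocks without worrying that such a block ceases to be independent.

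Finally, fix $i\le m$. If $|V_i|<2\Delta$ then $|S\cap V_i|\ge 0=\floor*{\frac{|V_i|}{2\Delta}}$ trivially. Otherwise $V_i$ contains the $k_i$ pairwise disjoint blocks $V_i^1,\dots,V_i^{k_i}$, and $S$ meets each of them in a distinct vertex, so $|S\cap V_i|\ge k_i=\floor*{\frac{|V_i|}{2\Delta(G)}}$, which is the required inequality. The whole argument is essentially bookkeeping around the floor function and the reindexing of the partition; I do not expect any genuine obstacle, the only step that calls for a moment's care being the remark that Haxell's theorem tolerates non-independent classes, which legitimizes the way the undersized classes are handled.
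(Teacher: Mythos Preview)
Your overall strategy matches the paper's, but your way of handling the small classes introduces a genuine (if easily repaired) gap. After you fold the vertices of the classes with $|V_i|<2\Delta$ into the already-built blocks, the part that \tref{haxell} guarantees $S$ to meet is the \emph{augmented} block $V_i^j\cup(\text{added vertices})$, and the point of $S$ in that part may well be one of the added vertices rather than a vertex of $V_i^j\subseteq V_i$. Hence your closing line ``$V_i$ contains the $k_i$ pairwise disjoint blocks $V_i^1,\dots,V_i^{k_i}$, and $S$ meets each of them'' is not justified: $S$ meets each augmented block, not each original $V_i^j$. Concretely, if $|V_1|=2\Delta$ and $|V_2|=1$, your only augmented block is $V_1\cup V_2$, and Haxell's theorem may hand you $S=\{v\}$ with $v\in V_2$, giving $|S\cap V_1|=0<1=\lfloor |V_1|/2\Delta\rfloor$.

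The fix is to skip the redistribution entirely, as the paper (implicitly) does: inside each $V_i$ take $\lfloor |V_i|/2\Delta\rfloor$ pairwise disjoint subsets of size exactly $2\Delta$, and ignore every remaining vertex. These sets then form a partition of the vertex set of the induced subgraph $G'=G\bigl[\bigcup_{i,j} V_i^j\bigr]$; since $\Delta(G')\le\Delta(G)$, each part still has size at least $2\Delta(G')$, and \tref{haxell} applied to $G'$ yields an independent set in $G'$ (hence in $G$) meeting every $V_i^j$. Now your counting is sound. Note also that the hypothesis that the $V_i$ be independent is never used, in your argument or the paper's.
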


\begin{proof}
For each $i \le m$ let  $V^j_i$~($j \le \floor*{ \frac{|V_i|}{2\Delta(G)}}$) be disjoint subsets of size $2\Delta(G)$ of $V_i$. By Theorem \ref{haxell} there exists an independent set $S$ meeting all $V^j_i$, and this is the set desired in the theorem.
\end{proof}

\subsection{The special behavior of matching complexes}

Matching complexes, namely the independence complexes of line graphs, behave better than independence complexes of general graphs. For example, the following was proved   in \cite{aab}:

\begin{theorem}\label{lineimprovement}
If $G$ is the line graph of a graph and $V_1, \ldots ,V_m$ is a partition of $V(G)$ then
 there exists an independent set $S$ such that
 $|S \cap V_i| \ge \floor*{ \frac{|V_i|}{\Delta(G)+2}}$ for every $i\le m$.
\end{theorem}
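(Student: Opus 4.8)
The plan is to translate everything into matchings and then apply a topological criterion for rainbow matchings. Since $G$ is a line graph, fix a graph $H$ with $G=L(H)$; then $V(G)=E(H)$, the independent sets of $G$ are exactly the matchings of $H$, and the independence complex $\mathcal{I}(G)$ is the matching complex $\mathcal{M}(H)$. Write $d:=\Delta(G)$. First I would reduce to a ``rainbow'' statement by block‑splitting, just as in the proof of the Corollary above: inside each $V_i$ choose $\floor*{|V_i|/(d+2)}$ pairwise disjoint blocks, each consisting of exactly $d+2$ edges of $H$. If some single matching $M$ of $H$ meets every block, then $|M\cap V_i|\ge\floor*{|V_i|/(d+2)}$ for all $i$, which is the conclusion. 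So it suffices to show: \emph{whenever the edges of a graph $H$ are partitioned, after discarding some, into blocks of size $d+2$ with $d\ge\Delta(L(H))$, there is a matching of $H$ meeting every block.}

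Next I would invoke the topological Hall theorem (Aharoni--Haxell, in Meshulam's connectivity formulation): an independent system of representatives -- here a matching of $H$ hitting each block -- exists provided that for every sub‑collection of $j$ blocks, the subcomplex of $\mathcal{M}(H)$ spanned by the $j(d+2)$ edges in their union has topological connectivity $\eta\ge j$. That union is the edge set of a subgraph $H'\subseteq H$, so $\Delta(L(H'))\le d$ and $\mathcal{M}(H')$ has $|E(H')|=j(d+2)$ vertices, and the whole theorem comes down to the connectivity estimate
\[
\eta(\mathcal{I}(\Gamma))\;\ge\;\Bigl\lfloor\tfrac{|V(\Gamma)|}{\Delta(\Gamma)+2}\Bigr\rfloor \qquad\text{for every line graph }\Gamma,
\]
since applying it to $\Gamma=L(H')$, where $|V(\Gamma)|=j(d+2)$ and $\Delta(\Gamma)\le d$, gives $\eta(\mathcal{M}(H'))\ge\floor*{j(d+2)/(\Delta(\Gamma)+2)}\ge j$, as needed.

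This connectivity estimate is the heart of the matter, and it is where the line‑graph hypothesis is genuinely used: doubling the denominator to $2\Delta(\Gamma)$ yields a bound valid for all graphs, but that is exactly what already underlies the Corollary above, so the improvement requires the line‑graph structure. Two routes suggest themselves (the first presumably being the one of \cite{aab}). The spectral route: a line graph $\Gamma=L(H')$ has $\lambda_{\min}(\Gamma)\ge -2$ and $\lambda_{\max}(\Gamma)\le\Delta(\Gamma)$, so one feeds $\lambda_{\max}(\Gamma)-\lambda_{\min}(\Gamma)\le\Delta(\Gamma)+2$ into a Hoffman‑type spectral lower bound on $\eta(\mathcal{I}(\cdot))$ for claw‑free graphs, the topological analogue of $\alpha(\Gamma)\le\frac{-\lambda_{\min}}{\lambda_{\max}-\lambda_{\min}}|V(\Gamma)|$. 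The combinatorial route: induct on $|E(H')|$ via the recursion $\eta(\mathcal{I}(\Gamma))\ge\min\{\eta(\mathcal{I}(\Gamma-v)),\,\eta(\mathcal{I}(\Gamma-N[v]))+1\}$, observing that for an edge $e=xy$ of $H'$ one has $|N_{L(H')}[e]|=\deg x+\deg y-1\le\Delta(L(H'))+1$, so deleting the closed neighbourhood of a vertex of the line graph (equivalently, deleting $x$ and $y$ from $H'$) removes at most $\Delta(L(H'))+1$ of the $\Delta(L(H'))+2$ ``budgeted'' edges -- precisely the slack needed to absorb the ``$+1$''. I expect the main obstacle to be the other branch, $\eta(\mathcal{I}(\Gamma-v))$, where only one edge of $H'$ is removed while $\Delta(L(H'))$ need not drop (already a problem when $H'$ is regular); closing the induction there seems to require either a global amortization that peels whole closed neighbourhoods and charges the loss collectively, in the style of Meshulam's domination‑theoretic bounds, or a judicious choice of which edge to peel (e.g.\ one at a vertex of maximum degree in $H'$) -- or else the spectral argument, which sidesteps the difficulty entirely.
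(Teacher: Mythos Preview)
Your proposal is correct and follows essentially the same route as the paper: the paper derives the theorem from the connectivity bound $\eta(\ci(G))\ge |V|/(\Delta(G)+2)$ for line graphs (cited from \cite{aab}) via the topological Hall theorem of \cite{ah}, exactly as you outline with the block-splitting reduction. Your additional discussion of how one might actually prove the connectivity bound goes beyond what the paper provides here (it simply cites \cite{aab}), and you correctly identify the spectral argument as the one used there.
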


This follows from a bound on the topological connectivity of the independence complexes of line graphs.
\begin{equation}
\label{eta1}
\eta(\ci(G))\ge \frac{|V|}{\Delta(G)+2}
\end{equation}

Here $\eta(\cc)$ is a connectivity parameter of the complex $\cc$ (for the definition see, e.g.,  \cite{aab}).
 The way from \eqref{eta1} to Theorem \ref{lineimprovement} goes
through a topological version of Hall's theorem, proved in \cite{ah}. A hypergraph version of \eqref{eta1} was proved in \cite{agn}. Theorem \ref{chordal} follows from the fact that if $G$ is chordal then $\eta(\ci(G))\ge \frac{|V|}{\Delta(G)+1}$.

So, matching complexes are more likely to admit fair representations.
We suggest four classes of complexes as candidates for having almost fair representation of disjoint sets.
\begin{enumerate}

\item The matching complex of a path.
\item
The matching complex of $K_{n,n}$.
\item The matching complex of any bipartite graph.
\item The intersection of two matroids.
\end{enumerate}

Since the third  class contains the first two and the fourth contains the third, conjecturing almost fair representation for them goes in ascending order of daring. In fact,
we only dare make the conjecture for the first two. As to the fourth, let us just remark that intersections of matroids often behave unexpectedly well with respect to partitions. For example, no instance is known to the authors in which, given two matroids $\cm$ and $\cn$, there holds $\beta(\cm \cap \cn)>\max(\beta(\cm), \beta(\cn))+1$.

\subsection{Independence complexes of paths}

In Section~\ref{paths} we prove that the independence complex of a path always admits almost fair representation. In fact, possibly more than that is true.  Since the matching complex
of a path is the independence complex of a path  one vertex shorter, a conjecture in this direction
(in a slightly stronger form) can be formulated as follows:

\begin{conjecture}\label{treesconj0}
Given a partition of the vertex set of a path into sets $V_1, \ldots, V_m$ there exists an independent set $S$ and integers $b_i,~i \le m$ , such that $|S \cap V_i| \ge \frac{|V_i|}{2}-b_i$ for all $i$, and

\begin{enumerate}
\item

$\sum_{i\le m}b_i \le \frac{m}{2}$

and
\item $b_i \le 1$ for all $i \le m$.
\end{enumerate}
\end{conjecture}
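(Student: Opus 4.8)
The plan is to recast \cjref{treesconj0} as a cleaner equivalent statement and then attack it by repairing a canonical maximum independent set. Given an independent set $S$ in the path $P$, the least admissible $b_i$ is $\ceil*{\tfrac{|V_i|}{2}}-|S\cap V_i|$, so the two requirements on $S$ become
$(\mathrm{I})$: $|S\cap V_i|\ge\ceil*{\tfrac{|V_i|}{2}}-1$ for every $i$ (this is $b_i\le 1$), and
$(\mathrm{II})$: $|S|\ge\sum_{i\le m}\ceil*{\tfrac{|V_i|}{2}}-\tfrac m2$ (this is $\sum_i b_i\le\tfrac m2$).
Condition $(\mathrm{I})$ is a mild strengthening of the almost-fair bound for paths proved in Section~\ref{paths} — it coincides with that bound on parts of even size and asks for one extra vertex on parts of odd size — while $(\mathrm{II})$ says that on top of $(\mathrm{I})$ the set $S$ must be nearly as large as possible: indeed $\sum_i\ceil*{\tfrac{|V_i|}{2}}-\tfrac m2=\tfrac{|V(P)|}{2}-\tfrac12\#\{i:|V_i|\text{ even}\}\le\alpha(P)$. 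So the goal is to produce an independent set satisfying $(\mathrm{I})$ that is, in addition, close to maximum.

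The strategy I would pursue is to start from one of the two canonical maximum independent sets of $P$, the odd-position set $O$ and the even-position set $E$ (which together partition $V(P)$), and then repair it. Choose one of them, say $O$, so that $|V_i\cap O|\ge\tfrac{|V_i|}{2}$ for at least $\ceil*{\tfrac m2}$ parts; one of the two choices works, since every part has $|V_i\cap O|\ge\tfrac{|V_i|}{2}$ or $|V_i\cap E|\ge\tfrac{|V_i|}{2}$. With $S=O$ condition $(\mathrm{II})$ holds with room to spare, because $|O|=\alpha(P)$ exceeds the bound in $(\mathrm{II})$ by at least $\tfrac12\#\{i:|V_i|\text{ even}\}$. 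The parts that violate $(\mathrm{I})$ are precisely the badly $E$-leaning ones, with $|V_i\cap E|\ge\floor*{\tfrac{|V_i|}{2}}+2$, and each such $V_i$ can be fixed by flipping the pattern from $O$ to $E$ on a subpath containing all of $V_i$ (replacing $|V_i\cap O|$ by $|V_i\cap E|$ inside $V_i$), at the cost of at most one vertex of $|S|$ per flip and of possibly disturbing the parts meeting that subpath. One then has to schedule these flips — choosing their subpaths, their order, and occasionally flipping back — so that in the end every part satisfies $(\mathrm{I})$ and the total loss from $|S|$ stays inside the $(\mathrm{II})$ budget.

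The main obstacle is exactly this scheduling: a single flip repairs one part but can break several others, flips on overlapping subpaths interfere, and the width $\tfrac12\#\{i:|V_i|\text{ even}\}$ of the $(\mathrm{II})$ budget a priori bears no relation to the number of parts violating $(\mathrm{I})$. I would try to overcome it by processing the offending parts in order of their rightmost vertex and flipping only on minimal intervals that are ``clean'' at both ends (where the pattern already skips), aiming to make the pattern of interference laminar so that the flips can be taken essentially disjoint, and the loss thereby controlled. Failing that, the fallback is to revisit the argument behind the almost-fair path bound of Section~\ref{paths} — which already beats the generic $1/3$-proportion that one would get from the topological connectivity of $\ci(P)$, and so must exploit the interval structure of the path directly — and to strengthen it to deliver $(\mathrm{II})$ simultaneously; this is where I expect the real work to lie. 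Note, finally, that no linear-programming shortcut is available: the all-$\tfrac12$ vector is a feasible fractional point meeting every target exactly, but the stack of the path's edge-incidence matrix (an interval matrix, hence totally unimodular) with the partition matrix of the $V_i$ need not be totally unimodular, so this point cannot simply be rounded.
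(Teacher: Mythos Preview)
The statement you are attempting is \cjref{treesconj0}, which the paper explicitly leaves open: immediately after stating it the authors write that they ``prove the existence of sets satisfying either condition of Conjecture~\ref{treesconj0} (but not necessarily both simultaneously)''. So there is no proof in the paper to compare your proposal against. What the paper does prove is the two halves separately, by topological methods: condition~(1) alone is \tref{pathscasetotal}, proved via the Borsuk--Ulam theorem (the necklace-splitting argument of Section~\ref{paths}), and condition~(2) alone is \tref{pathscaseindividual}, proved for cycles via Schrijver's theorem (Section~\ref{paths2}). Your reformulation into conditions $(\mathrm{I})$ and $(\mathrm{II})$ is correct; note, though, that $(\mathrm{I})$ is not a strengthening of what the paper establishes --- \tref{pathscaseindividual} already gives $|S\cap V_i|\ge |V_i|/2-1$, which for integer $|S\cap V_i|$ is exactly $|S\cap V_i|\ge\ceil*{|V_i|/2}-1$.

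Your approach is genuinely different from the paper's: rather than a topological existence argument, you propose a constructive repair of a canonical maximum independent set by flipping the odd/even pattern on subintervals. This is a reasonable heuristic and the accounting you set up (the $(\mathrm{II})$-budget equals $\tfrac12\#\{i:|V_i|\text{ even}\}$) is correct. However, your proposal is, as you yourself say, not a proof but a plan, and the obstacle you identify is real and unresolved. A part $V_i$ can be scattered arbitrarily along the path, so a ``subpath containing all of $V_i$'' may be essentially the whole path and a single flip can disturb every other part; there is no a~priori laminar or interval structure to exploit, and the number of $E$-leaning parts that violate $(\mathrm{I})$ is unrelated to the number of even-sized parts that supply your budget. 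Neither of the paper's two topological arguments combines with the other in any obvious way either --- the Borsuk--Ulam zero in Section~\ref{paths} gives no control on individual $b_i$, and the Schrijver colouring in Section~\ref{paths2} gives no control on $|S|$ --- which is precisely why the conjecture remains open. In short: your reformulation is sound, your strategy is plausible but incomplete, and the gap you flag is the same gap the paper does not close.
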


We prove the existence of sets satisfying either condition of Conjecture
\ref{treesconj0} (but not necessarily both simultaneously).

\begin{theorem}\label{pathscasetotal}
Given a partition of the vertex set of a path into sets $V_1, \ldots, V_m$ there exists an independent set $S$ and integers $b_i,~i \le m$ , such that $\sum_{i\le m}b_i \le \frac{m}{2}$ and $|S \cap V_i| \ge \frac{|V_i|}{2}-b_i$ for all $i$.
\end{theorem}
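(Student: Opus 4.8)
The plan is to prove \tref{pathscasetotal} by induction on the number $m$ of parts, processing the path from left to right and greedily grabbing vertices into the independent set $S$. We may assume the path is $P = p_1 p_2 \ldots p_N$ with $N = \sum_i |V_i|$. The key quantity to control is, for each part $V_i$, the deficiency $\defic_i := \max\bigl(0,\frac{|V_i|}{2} - |S \cap V_i|\bigr)$; we want an independent set with $\sum_i \defic_i \le \frac{m}{2}$, and then $b_i := \defic_i$ works. The natural invariant to maintain while scanning the path is that, after having decided the status of the first $\ell$ vertices, the running deficiency incurred so far is at most (one half times the number of parts \emph{already fully consumed}) plus a bounded boundary term coming from the single part currently being processed.

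First I would set up the right decomposition of the path. Because every other vertex can be taken, a block of $k$ consecutive vertices all lying in the same part $V_i$ contributes $\ceil*{k/2}$ vertices to a locally optimal independent set, which already meets $\frac{k}{2}$, so a part that occupies one contiguous interval of the path is represented \emph{exactly} fairly with $b_i = 0$. The difficulty is entirely at the \emph{boundaries} between parts: when the scan crosses from $V_i$ to $V_j$ at an edge $p_\ell p_{\ell+1}$, we cannot take both endpoints, and deciding which one to drop is what may cost a part up to $\tfrac12$. So I would partition the path into maximal monochromatic runs $R_1, R_2, \ldots, R_t$ (each $R_s \subseteq V_{c(s)}$ for some colour $c(s)$), take on each run an independent set that is ``greedy from the left'' — i.e.\ take the first vertex of the run unless it conflicts with the last vertex already taken from the previous run — and then bound the total shortfall.

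The core counting step is this: a part $V_i$ that is split into $r_i$ runs along the path can, in the worst case, lose half a vertex at each internal ``bad'' transition, but I claim the total loss is at most $\frac{1}{2}(\text{number of runs} - m)$ or, after regrouping the $\tfrac12$'s, at most $\frac{m}{2}$ overall. More precisely: assign to each of the $t$ runs a potential, show that consecutive runs of the \emph{same} colour never both incur a penalty (because after losing the leading vertex of one run, the greedy rule guarantees you keep the leading vertex of the next run of that colour), and conclude that the number of penalised runs is at most $t - (\text{number of colours that appear}) \le t - m$ is the wrong direction — instead one shows each \emph{colour change} costs at most $\tfrac12$ charged to the part \emph{being entered}, and each part is entered (beyond its first run) a number of times that, summed, telescopes against the $\frac{m}{2}$ budget by pairing each penalised entry into $V_i$ with a preceding departure. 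A cleaner route, which I would actually pursue, is induction: remove the last run $R_t \subseteq V_j$; apply the theorem to the shorter path $P' = p_1 \ldots p_{N - |R_t|}$ with its induced partition (dropping $V_j$ entirely if $R_t$ was all of it, whence $m$ drops by $1$ and we have $\tfrac12$ of budget to spare for reattaching), extend the independent set $S'$ across the single new transition edge, losing at most $\tfrac12$ on $V_j$, and check that the bookkeeping $\sum b_i \le \frac{m}{2}$ survives because the one extra $\tfrac12$ is absorbed either by the freed-up $\tfrac12$ from the vanished part or by the fact that $R_t$ being a proper suffix of $V_j$'s occurrences means an earlier run of $V_j$ already ``paid'' and the greedy choice at the new edge can be made to favour $V_j$.

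The main obstacle, I expect, is precisely this last amortisation: controlling the accumulation of the boundary $\tfrac12$'s so that their sum stays $\le \frac{m}{2}$ rather than $\le \frac{t}{2}$ where $t$ (the number of monochromatic runs) can be much larger than $m$. The resolution must exploit that we have \emph{freedom in the greedy tie-breaking} at each transition — at an edge between two runs we may choose which side keeps its boundary vertex — and that a part appearing in many runs can be made ``whole'' on all but at most one of the transitions into it, so that the effective number of charged transitions is at most $m$ (one per part), not $t$. Making the choice of tie-breaks globally consistent — essentially a parity/orientation argument along the path, or equivalently choosing, for each part, a single ``sacrificial'' run — is the crux, and is where the topological flavour of the paper would, alternatively, let one replace this combinatorial bookkeeping by a connectivity estimate for the independence complex of the path; but the direct inductive argument sketched above should suffice for this theorem.
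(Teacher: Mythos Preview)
Your approach---a greedy left-to-right scan with induction on the number of monochromatic runs---is entirely different from the paper's, which applies the Borsuk--Ulam theorem in the style of the necklace-splitting theorem. The difference is not cosmetic: your argument has a genuine gap at precisely the step you yourself flag as the crux.

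The induction does not close. When the last run $R_t\subseteq V_j$ is removed and $V_j$ retains earlier runs, $m$ stays fixed; the inductive hypothesis hands you \emph{some} $S'$ on the shorter path with $\sum_i b_i'\le m/2$, where $b_j'$ is measured against the smaller target $|V_j\setminus R_t|/2$. Extending $S'$ greedily through $R_t$ recovers at least $\lfloor|R_t|/2\rfloor$ vertices of $V_j$, so if $|R_t|$ is odd and the last vertex of $S'$ blocks the first vertex of $R_t$ you obtain only $b_j\le b_j'+\tfrac12$, hence $\sum_i b_i\le\tfrac{m}{2}+\tfrac12$. Your proposed fix---``the greedy choice at the new edge can be made to favour $V_j$''---is circular: $S'$ is a black box, you cannot retroactively change its earlier tie-breaks, and swapping the boundary vertex shifts a cost of up to $1$ onto part $c(t-1)$ instead. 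Iterated, the bound you actually establish is $\tfrac{t-1}{2}$ rather than $\tfrac{m}{2}$. The assertion that the tie-breaks can be made globally consistent so that each colour is penalised at most once is exactly the content of the theorem, so invoking it is assuming what is to be proved. What the problem really demands is at most $m$ positions along the path at which to switch between ``take odd-indexed vertices'' and ``take even-indexed vertices'' so that every colour class is split in half; this is the two-thief, $m$-colour necklace theorem, and the paper makes the identification explicit, obtaining the switch points $z_1,\ldots,z_m$ from a zero of an odd continuous map $S^m\to\mathds{R}^m$ via Borsuk--Ulam. No purely greedy or local-induction proof of necklace splitting is known, and nothing in your sketch supplies one; if you want to avoid topology you would at minimum need a much stronger inductive hypothesis that simultaneously controls the parity at the right endpoint and carries a transferable budget, and it is not at all clear such a hypothesis can be maintained.
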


The proof of Theorem \ref{pathscasetotal}
uses the Borsuk-Ulam theorem.

\begin{theorem}\label{pathscaseindividual}
Given a partition of the vertex set of a cycle into sets $V_1, \ldots, V_m$ there exists an independent set $S$ such that $|S \cap V_i| \ge \frac{|V_i|}{2}-1$ for all $i$.
\end{theorem}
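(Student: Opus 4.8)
The plan is to reduce the cycle problem to a problem about a path and then apply a Borsuk--Ulam type argument, much as in the proof of Theorem \ref{pathscasetotal}, but carried out on the cycle so as to obtain the uniform bound $b_i\le 1$ rather than a bound on the sum. First I would fix a cyclic orientation of the cycle $C$ on vertices $v_1,\dots,v_N$ (indices mod $N$) and encode an independent set $S$ by a configuration on the circle: the key observation is that independent sets in a cycle of length $N$ correspond to ways of placing ``gaps'' so that no two chosen vertices are adjacent, and such configurations are naturally parametrized by points of a sphere. Concretely, I would introduce a continuous parameter space — a product of intervals, one per vertex, quotiented so that a sign pattern on $\mathbb R^{N}$ (restricted to the sphere $S^{N-1}$) determines which vertices are ``in'' $S$ — and arrange that the antipodal map on the sphere corresponds to complementation of the pattern around the cycle. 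The point of using the cycle (as opposed to the path) is that the cyclic symmetry gives an honest $\mathbb Z_2$-action with no boundary effects, which is what upgrades the $\frac m2$ bound to a per-part bound of $1$.

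Next I would define a test map $f\colon S^{N-1}\to\mathbb R^{m-1}$ (or $\mathbb R^m$ with the all-ones direction quotiented out) whose $i$-th coordinate measures the discrepancy $|S\cap V_i| - \tfrac{|V_i|}{2}$ for the independent set $S$ read off from the sign pattern, suitably interpolated to be continuous. By construction $f$ would be antipodal-equivariant: flipping the sign pattern replaces $S$ by (essentially) its complement inside each $V_i$, which negates each discrepancy. Borsuk--Ulam then gives a point where $f=0$, i.e.\ a ``fractional'' independent set meeting every $V_i$ in exactly half of it. The remaining work is a rounding step: from this balanced fractional configuration one extracts an honest independent set $S$ in $C$ with $|S\cap V_i|\ge \tfrac{|V_i|}{2}-1$ for every $i$, the $-1$ coming from the at most one vertex per part that must be discarded when turning the continuous configuration (which may ``split'' one interval of the circle) into a genuine independent set.

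The details of the encoding require care: the standard trick for paths is to think of a point of $S^{n}$ as a word in $\{+,0,-\}$ via the signs of the coordinates, reading a maximal run of one sign as a block of chosen vertices; on a cycle one must handle the ``wrap-around'' block and the parity of the number of sign changes, and ensure the antipodal map acts correctly there. I would set this up so that an even number of sign-changes is forced (or handled by a separate, easy case), exactly as one expects from the structure of independent sets in an even versus odd cycle.

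The main obstacle I anticipate is precisely this rounding/encoding interface: making the test map genuinely continuous and genuinely antipodal while guaranteeing that the zero it produces can be rounded losing at most one vertex \emph{per part}, uniformly. In the path case (Theorem \ref{pathscasetotal}) the analogous losses are only controlled in aggregate — $\sum b_i\le \tfrac m2$ — because the rounding errors at different ``cut points'' are distributed across parts; to get $b_i\le 1$ for \emph{every} $i$ one needs the cyclic structure to ensure that each part is affected by at most one cut. Verifying that a single cut suffices, and that it costs at most one vertex in the unique part it touches, is the crux; everything else is a routine application of Borsuk--Ulam together with the dictionary between sign patterns and independent sets in a cycle.
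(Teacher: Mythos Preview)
Your proposal has a genuine gap at exactly the place you flag as ``the crux.'' A Borsuk--Ulam argument of the necklace type needs a sphere of dimension at least $m-1$ to balance $m-1$ independent discrepancy constraints, and that produces on the order of $m$ cut points along the cycle; each cut can cost one vertex when rounding. Nothing about the cyclic (as opposed to linear) structure forces these cuts to land in distinct parts $V_i$, so there is no reason the per-part loss should be bounded by~$1$. Your sentence ``verifying that a single cut suffices'' is incompatible with having to satisfy $m-1$ balance equations, and you offer no mechanism for distributing the many cuts across parts. The encoding is also not coherent as written: a sign pattern on $S^{N-1}$ does not naturally produce an independent set in $C_N$, and ``reading a maximal run of one sign as a block of chosen vertices'' yields a set of consecutive vertices, which is far from independent. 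In the path proof the independent set comes from combining the sign with the odd/even parity of the bead, and the losses occur precisely at the $m$ transition beads---which is why that proof only controls $\sum_i b_i$.

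The paper's proof takes a different route and avoids rounding altogether. It invokes Schrijver's theorem that the graph $K(n,k)$, whose vertices are the independent $k$-sets of $C_n$ with disjoint pairs adjacent, has chromatic number $n-2k+2$. After a contraction to arrange $|V_i|=2r_i+1$ for $i<m$ and $|V_m|=2r_m$, one sets $k=\sum r_i$, so that $n-2k+2=m+1$. If no independent $k$-set hits every $V_i$ in exactly $r_i$ vertices, then each such set $S$ over-represents some $V_i$; coloring $S$ by the least such $i$ gives a proper $m$-coloring of $K(n,k)$ (two disjoint sets cannot both contain $r_i+1$ elements of a set of size at most $2r_i+1$), contradicting $\chi(K(n,k))=m+1$. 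Schrijver's theorem ultimately rests on Borsuk--Ulam too, but the combinatorial coloring step is the idea your direct approach is missing.
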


The proof
 uses a theorem of
Schrijver, strengthening a famous theorem of Lov\'asz on the
chromatic number of Kneser graphs. This means that it, too, uses indirectly the
Borsuk-Ulam theorem, since the Lov\'asz-Schrijver proof uses the
latter. We refer the reader to Matou{\v s}ek's book \cite{mat} for
background on topological methods in combinatorics, in particular
applications of the Borsuk-Ulam theorem.

\subsection{The matching complex of $K_{n,n}$}

\begin{conjecture}\label{equirep00}
For any partition $E_1, E_2, \ldots ,E_m$ of $E(K_{n,n})$ and any $j \le m$ there exists a perfect matching $F$
 in $K_{n,n}$ satisfying $|F \cap E_i| \ge \floor*{ \frac{|E_i|}{n} }$ for all $i \neq j$, and $|F \cap E_j| \ge \floor*{ \frac{|E_i|}{n} } -1$.
\end{conjecture}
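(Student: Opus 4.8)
\medskip\noindent\textbf{Proof idea.}
Write $a_i=\floor*{\frac{|E_i|}{n}}$, so that $\sum_i a_i\le n$. I would first reduce to a rainbow--matching problem. Since $|E_i|\ge na_i$, for each $i\ne j$ one may partition $E_i$ into $a_i$ pairwise disjoint ``blocks'', each of size $\ge n$, and likewise carve $a_j-1$ such blocks out of $E_j$ (possibly none, if $a_j\le 1$); let $\cb$ denote the resulting family of at most $n-1$ blocks. If $M$ is any matching of $\kn$ that meets every block of $\cb$, then extending $M$ to a perfect matching $F\supseteq M$ (always possible in $\kn$) gives $|F\cap E_i|\ge|M\cap E_i|\ge a_i$ for all $i\ne j$ and $|F\cap E_j|\ge a_j-1$. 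So it suffices to find a matching that is a system of representatives for $\cb$.

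The natural tool for this is the topological version of Hall's theorem from \cite{ah}: such a matching exists provided that for every subfamily $\cb'\subseteq\cb$, the matching complex $\ci(L(H))$ of the subgraph $H$ of $\kn$ spanned by $\bigcup\cb'$ satisfies $\eta(\ci(L(H)))\ge|\cb'|$. Feeding $L(H)$ into \eqref{eta1} and using $\Delta(H)\le n$, hence $\Delta(L(H))\le 2n-2$, yields only $\eta(\ci(L(H)))\ge\frac{|E(H)|}{2n}\ge\frac{|\cb'|}{2}$, since every block has at least $n$ edges. This is short of the required $|\cb'|$ by a factor of roughly $2$, and closing that gap is, I believe, the main obstacle. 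It cannot be closed merely by sharpening the connectivity estimate: $\ci(L(\kn))$ is the chessboard complex, whose topological connectivity is only of order $\tfrac{2n}{3}$, so already $\eta(\ci(L(\kn)))<n$ for large $n$, and no Hall-type argument routed through matching complexes can reach proportion $\tfrac1n$. The full conjecture therefore seems to demand a genuinely different idea---perhaps exploiting that $\ci(L(\kn))$ is an intersection of two partition matroids (cf.\ \tref{2delta}), or attacking head-on the extremal instances, in which each $E_i$ is a union of perfect matchings and the problem becomes of Latin-square-transversal type.

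For $m=3$, by contrast, the gap is narrow enough to bridge directly, which is the result announced in the abstract. Relax the prescribed part $E_j$; it then remains to produce a perfect matching $F$ with $|F\cap E_k|\ge a_k$ for the two indices $k\ne j$ and, equivalently to $|F\cap E_j|\ge a_j-1$, with $\sum_{k\ne j}|F\cap E_k|\le n-a_j+1$, while $\sum_i a_i\le n$ leaves exactly one unit of slack. I would run a direct exchange argument: start from an arbitrary perfect matching and repeatedly swap along the alternating cycles of its symmetric difference with auxiliary perfect matchings chosen greedily for each of $E_{k_1}$, $E_{k_2}$ (these exist by K\H{o}nig's theorem applied to $E_{k_1}$, resp.\ $E_{k_2}$, with deficiency controlled by $|E_k|\ge na_k$), tracking the pair $(|F\cap E_{k_1}|,|F\cap E_{k_2}|)$ and showing that the reachable set covers the target lattice box up to a single corner point---the familiar one arising because two perfect matchings of $\kn$ cannot agree in exactly $n-1$ edges---which the unit of slack absorbs. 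A more symmetric alternative would be a topological argument in the spirit of \tref{pathscasetotal} and \tref{pathscaseindividual}: with three parts the target lives in a $2$-simplex, inviting a Borsuk--Ulam/Tucker-type application. In either route I expect the delicate case to be when one $E_i$ is (nearly) a disjoint union of perfect matchings---precisely the structured configuration that also stymies the general conjecture.
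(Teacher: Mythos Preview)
The statement is a conjecture, and the paper does not prove it in full generality; it only establishes the cases $m=2$ and $m=3$. Your first two paragraphs correctly recognise this and give a sound explanation of why the topological-Hall reduction falls short---in particular, your remark that the chessboard complex has connectivity of order $2n/3$ is exactly the obstruction, and your block reduction to a rainbow-matching problem is set up correctly. None of this is a proof, but it is accurate commentary.

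For $m=3$ your sketch diverges substantially from the paper's argument and is too thin to count as a proof. The paper does use topology, but via Sperner's lemma rather than Borsuk--Ulam/Tucker: it works in the clique complex $\cc$ of the relation $\sigma\sim\tau\iff d(\sigma,\tau)\le3$ on $S_n$ (Hamming distance), proves $\cc$ is simply connected by explicit ``shift'' homotopies, labels the boundary of a hexagon by perfect matchings over-representing the various $E_i$ (a K\"onig-type lemma, \lref{lopsided3}, populates the six sides), fills the hexagon by simple connectivity, and applies a hexagonal Sperner lemma to obtain a rainbow triangle $\{\sigma_1,\sigma_2,\sigma_3\}$ in $\cc$ with each $\sigma_\ell$ over-representing $E_\ell$. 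A final case analysis on the $3\times3$ deviation matrix $(d_i(\sigma_j))$ then extracts a single $\sigma$ with $|d_\ell(\sigma)|\le1$ for all $\ell$.

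Your ``direct exchange'' alternative---swap along alternating cycles and track $(|F\cap E_{k_1}|,|F\cap E_{k_2}|)$---may well be viable (the paper notes that a combinatorial proof was later found in \cite{BLS}), but as written it is only a plan. You have not said which auxiliary matchings to swap against, how successive swaps interact, or why the reachable set covers the required lattice box up to one corner. The one-dimensional intermediate-value argument you allude to is precisely how the paper handles $m=2$; with two simultaneous constraints that mean-value trick no longer suffices, and controlling both coordinates at once is the real difficulty your sketch does not address.
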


We shall prove:

\begin{theorem}\label{}
Conjecture \ref{equirep00} is true for $m=2,3$.
\end{theorem}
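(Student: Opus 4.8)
The plan is to reduce, by relabelling, to the case $j=m$: it suffices to show that for every partition $E_1,\dots ,E_m$ of $\ekn$ there is a perfect matching $F$ with $|F\cap E_i|\ge a_i:=\floor*{\frac{|E_i|}{n}}$ for $i<m$ and $|F\cap E_m|\ge a_m-1$ (then, for an arbitrary $j$, apply this to the partition with the part indexed $j$ moved to the last position). Throughout we use that $|F\cap E_1|+\dots+|F\cap E_m|=n$ for every perfect matching $F$, and that $\sum_i a_i\le n$.

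For $m=2$ (so $j=2$) the goal becomes $a_1\le |F\cap E_1|\le n-a_2+1$, an interval of at least two integers because $a_1+a_2\le n$. First I would produce two ``extreme'' matchings. For $A\subseteq \ekn$ let $G_A$ be the spanning subgraph of $\kn$ with edge set $A$; since any matching of $G_A$ extends to a perfect matching of $\kn$, the maximum of $|F\cap A|$ over perfect matchings $F$ equals $\nu(G_A)$, and by K\"onig's theorem $\nu(G_A)=\tau(G_A)\ge \ceil*{\frac{|A|}{n}}$, as a minimum vertex cover of $G_A$ has $\tau(G_A)$ vertices each meeting at most $n$ edges of $A$. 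Hence there is a perfect matching $F^{+}$ with $|F^{+}\cap E_1|\ge a_1$ and a perfect matching $F^{-}$ with $|F^{-}\cap E_2|\ge a_2$, i.e. $|F^{-}\cap E_1|\le n-a_2$. If $|F^{+}\cap E_1|\le n-a_2+1$ we are done with $F=F^{+}$; otherwise I would walk from $F^{+}$ to $F^{-}$ through a sequence of perfect matchings in which each term is obtained from the previous one by exchanging the partners of two left vertices (the symmetric difference of the two underlying permutations is a product of transpositions). Along this walk $|F\cap E_1|$ changes by at most $2$ per step, starts at a value $\ge n-a_2+2$ and ends at a value $\le n-a_2$; the first matching $F$ on the walk with $|F\cap E_1|\le n-a_2+1$ then also satisfies $|F\cap E_1|\ge n-a_2\ge a_1$, so $|F\cap E_1|\in[a_1,\,n-a_2+1]$, as required.

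For $m=3$ this ``discrete intermediate value'' argument breaks down, since the target is now $2$-dimensional: writing $A_i$ for the $n\times n$ $0/1$ indicator matrix of $E_i$ and $\phi(X)=(\langle A_1,X\rangle,\langle A_2,X\rangle,\langle A_3,X\rangle)$, the image of the Birkhoff polytope $B_n$ of doubly stochastic matrices lies in the plane $\{x+y+z=n\}$, and $\phi$ sends the centre $n^{-1}J$ (all entries $1/n$) to the relative-interior point $n^{-1}(|E_1|,|E_2|,|E_3|)$, which already meets the target inequalities $x\ge a_1,\ y\ge a_2,\ z\ge a_3-1$; what remains is to show that some \emph{vertex} of $B_n$ — i.e. some permutation matrix — is mapped into this region, whereupon the corresponding perfect matching works (an integer point in the region meets all three bounds automatically). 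I would try to obtain this via the topological tools used elsewhere in the paper: split each $E_i$ into $a_i$ (respectively $a_3-1$) spread-out pieces and apply a topological Hall-type theorem together with a connectivity estimate for the matching complex of $\kn$ (the chessboard complex), and in the borderline cases where several $|E_i|$ are divisible by $n$ and the target degenerates to a few lattice points, use a Borsuk--Ulam / Ky~Fan-type statement in the spirit of Schrijver's theorem as in the cycle case. The hard part will be exactly this passage from the continuous statement on $B_n$ to an honest permutation: the connectivity of the chessboard complex grows only like $\tfrac{2n}{3}$, so a naive split-and-apply-Hall bound is off by a constant factor and one needs a sharper, problem-specific argument; I expect the divisibility obstructions to be the most delicate point.
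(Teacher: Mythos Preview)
Your treatment of $m=2$ is correct and essentially coincides with the paper's: find two perfect matchings on opposite sides of the target value of $|F\cap E_1|$, and walk from one to the other by transpositions, invoking a discrete intermediate-value argument.

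For $m=3$ there is a genuine gap: what you have written is not a proof but a research plan, and you yourself identify why it does not go through. The chessboard-complex route you propose --- split each $E_i$ into pieces and apply a topological Hall theorem using connectivity of the matching complex of $K_{n,n}$ --- runs into exactly the obstacle you name: the connectivity of the chessboard complex is only about $\tfrac{2n}{3}$, which is too small to guarantee a system of representatives of $\sim n$ pieces. The Birkhoff-polytope observation that the barycentre $n^{-1}J$ lands in the target region is true but does not help, since the target region need not contain any vertex of $\phi(B_n)$ a priori, and you give no mechanism for rounding a doubly stochastic matrix to a permutation while controlling all three coordinates.

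The paper's argument is structurally different and worth knowing. It does not use the matching complex at all. Instead it works with the clique complex $\mathcal{C}$ on $S_n$ in which two permutations are adjacent when their Hamming distance is at most~$3$, and proves that $\mathcal{C}$ is simply connected by exhibiting an explicit homotopy (repeated application of a ``shift'' operation that fixes one coordinate at a time). It then builds, via a connectivity lemma for the set of permutations fairly representing a given $E_i$, a triangulated hexagon in $\mathcal{C}$ whose boundary is $3$-coloured so that a hexagonal version of Sperner's lemma applies. This yields three pairwise $\sim$-related permutations $\sigma_1,\sigma_2,\sigma_3$ with $\sigma_\ell$ over-representing $E_\ell$. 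A final case analysis on the $3\times 3$ ``deviation matrix'' $(d_i(\sigma_j))$ then produces a single permutation meeting all three bounds. The key idea you are missing is to work with simple connectivity of a complex on \emph{permutations} (so that Sperner applies to a two-dimensional region) rather than with connectivity of the matching complex on \emph{edges}.
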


For $m=2$ the result is simple, and the weight of the argument is in a characterization
of those cases in which
there necessarily exists an index $j$ for which $|M \cap E_j| \ge \floor*{ \frac{|E_j|}{n}}-1$.
The proof of the case $m=3$ is topological, using  Sperner's lemma.

\subsection{Relationship to known conjectures}\label{sec:known}

Conjecture \ref{treesconj0} is related to a well known conjecture of Ryser on Latin squares.
Given an $n \times n$ array $A$ of symbols, a {\em partial
transversal} is a  set of entries taken from
distinct rows and columns, and containing distinct symbols. A
partial transversal of size $n$ is called simply  a {\em
transversal}. Ryser's conjecture \cite{ryser} is that if $A$ is a Latin square, and $n$ is odd, then $A$ necessarily has a transversal.  The oddness condition is indeed necessary - for every even $n>0$ there exist $n \times n$ Latin squares
not possessing a transversal. An example is the
addition table of $\mathds{Z}_n$: if a transversal $T$ existed for this Latin square,
then the sum of its elements, modulo $n$, is  $\sum_{k \le n}k= \frac{n(n+1)}{2}\pmod n$.
On the other hand, since every row and every column is represented
in this sum,  the sum  is equal to
$\sum_{i\le n}i+\sum_{j \le n}j=n(n+1)\pmod n$,
and for $n$ even the two results do not agree.
Arsovski  \cite{arsovski} proved a closely related
conjecture, of Snevily,
that every square submatrix (whether even or odd) of the addition table of an odd order abelian group
 possesses a transversal.

Brualdi \cite{brualdi} and Stein {\cite{stein} conjectured that for any  $n$, any Latin square of order $n$  has a
partial transversal of order $n-1$.
Stein \cite{stein} observed that the same conclusion may follow from weaker conditions - the square does not have to be Latin, and
it may suffice that the entries of the $n \times n$ square are equally
distributed among $n$ symbols. Re-formulated, this becomes a special case of Conjecture \ref{equirep00}:

\begin{conjecture}\label{equirep0}
If   the edge set of $K_{n,n}$ is partitioned into
 sets $E_1, E_2, \ldots ,E_n$ of size $n$ each, then there exists a matching
 in $K_{n,n}$ consisting of one edge from all but possibly one
 $E_i$.
\end{conjecture}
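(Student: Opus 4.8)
The plan is to attack \cjref{equirep0} through the topological / independent-system-of-representatives machinery behind \tref{lineimprovement}, after two reformulations. First, a matching of $\kn$ meeting $n-1$ of the classes contains, after discarding one edge, a \emph{partial rainbow matching} of size $n-1$ --- a set of $n-1$ cells of the array in distinct rows, distinct columns, and of distinct symbols --- and, conversely, such a set extends (uniquely, by adding the one uncovered row--column cell) to a perfect matching meeting $\ge n-1$ classes. So the task is equivalent to: every equitable $n\times n$ array admits a permutation $\sigma$ with $|\{\,c_{i\sigma(i)}:i\le n\,\}|\ge n-1$ (in a perfect matching carrying $\ge n-1$ symbols at most one symbol repeats, so deleting one cell of that symbol leaves a partial rainbow matching of size $n-1$).

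Second, view the matchings of $\kn$ as the complex $\ci(L(\kn))$ on vertex set $\ekn$ with the partition $E_1,\dots,E_n$; for $I\subseteq\{1,\dots,n\}$ let $G_I$ be the subgraph with edge set $\bigcup_{i\in I}E_i$, so that the induced subcomplex on $\bigcup_{i\in I}E_i$ is the matching complex $\ci(L(G_I))$. By the defect form of the topological Hall theorem (\cite{ah,aab}), it would suffice to establish
\begin{equation}\label{eq:propneed}
\eta\bigl(\ci(L(G_I))\bigr)\ \ge\ |I|-1\qquad\text{for all }I\subseteq\{1,\dots,n\}.
\end{equation}
The ``Hall data'' here is as strong as one could hope: $G_I$ has exactly $|I|n$ edges, so every vertex cover of $G_I$ has $\ge|I|$ vertices and hence the matching number $\nu(G_I)$ is $\ge|I|$ by K\H{o}nig's theorem; that is, \eqref{eq:propneed} asks for connectivity essentially equal to the matching number --- the regime in which, so far, only matroids are known to behave perfectly (\tref{2delta}).

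This is where the main obstacle lies. Condition \eqref{eq:propneed} is in fact false as a blanket statement: \eqref{eta1} applied to $L(G_I)$ (which has $|I|n$ vertices and maximum degree $\le 2n-2$) yields only $\eta\ge|I|/2$, the sharper rainbow-matching and hypergraph connectivity bounds of \cite{agn} are of the same order, and for $I=\{1,\dots,n\}$ we have $G_I=\kn$, whose matching complex is known to have topological connectivity only of order $\tfrac{2}{3}n$, well below $n-1$. Hence the plain defect theorem must be upgraded to a \emph{weighted} version that uses the equitability $|E_i|=n$: one should permit a local deficiency $\defic(I):=|I|-1-\eta(\ci(L(G_I)))$, positive for ``sparse-behaving'' subfamilies $I$, and pay for it out of the surplus created by the fact that each colour class is as large as a full matching, so that every thin subfamily is surrounded by much denser ones. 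Making precise --- and proving --- such a weighted topological Hall statement, tailored to subgraphs of $\kn$ with all colour classes of size $n$, is the step I expect to be genuinely hard; it is exactly the gap that separates \cjref{equirep0} from the unconditional \tref{lineimprovement}.

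In parallel I would keep two purely combinatorial fallbacks. (i) \emph{Augmentation}: take a largest partial rainbow matching $M$, $|M|=k$; if $k\le n-2$ then $\ge 2(n-k)$ rows/columns are uncovered and the $\ge n-k$ unused symbols occupy $(n-k)n$ cells, and one seeks an augmenting alternating structure by rotations along $M$, in the style of the classical proofs of Woolbright and of Brouwer--de~Vries--Wieringa for partial transversals of Latin squares --- but such arguments currently reach only $k\ge n-O(\sqrt n)$, or $n-o(n)$ with heavier absorption, the loss being intrinsic to the interaction of long rotations. (ii) \emph{Polynomial method}: by the reformulation above it would be enough to exhibit, via the Combinatorial Nullstellensatz, a permanent-type polynomial encoding ``$\ge n-1$ symbols'' with a provably nonzero coefficient, but controlling cancellations in that coefficient is precisely where the algebraic approach has stalled on the Brualdi--Stein conjecture itself (\sref{sec:known}). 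All three frameworks thus reduce to the same point --- closing a multiplicative-constant (equivalently, additive $\Theta(\log n)$) gap by a device specific to equitable arrays --- which is why the assertion is recorded here as a conjecture.
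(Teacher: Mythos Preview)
The statement is a \emph{conjecture} in the paper (it is Stein's version of the Brualdi--Ryser--Stein problem), and the paper gives no proof of it; it is introduced only as a special case motivating \cjref{equirep00}, and the paper's actual results concern other settings (paths, cycles, and the cases $m=2,3$ of \cjref{equirep00}). So there is no ``paper's own proof'' to compare your proposal against.

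Your write-up is not a proof either, and you say so explicitly: you correctly identify that the topological Hall route would require $\eta(\ci(L(G_I)))\ge|I|-1$, observe that this fails already for $I=[n]$ since the matching complex of $K_{n,n}$ has connectivity only of order $\tfrac{2}{3}n$, and note that the augmentation and algebraic fallbacks are stuck at $n-O(\sqrt n)$ and at the Brualdi--Stein barrier respectively. That is an honest and accurate summary of why the statement is open. One minor point: your first reformulation is a detour --- the conjecture already asks directly for a (partial) rainbow matching of size $\ge n-1$, so there is no need to pass through perfect matchings and back. But as an analysis of the obstacles your discussion is sound; it simply does not, and does not claim to, constitute a proof.
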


Here, even for $n$ odd there are examples without a full transversal.
In  matrix terminology, take a matrix $M$ with $m_{i,j} =i$ for $j<n$,
$m_{i,n}=i+1$ for $i<n$, and $m_{n,n}=1$.

 A related conjecture to Conjecture \ref{equirep00} was suggested in \cite{awanlessbarat}:

\begin{conjecture} \label{strongest}
If $E_1, E_2, \ldots ,E_m$ are sets of edges in a bipartite graph, and
$|E_i| > \Delta(\bigcup_{i \le m} E_i)+1$ then there exists a rainbow matching.
\end{conjecture}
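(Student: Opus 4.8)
The statement to prove is \cjref{strongest}: if $E_1,\dots,E_m$ are sets of edges in a bipartite graph with $|E_i| > \Delta(\bigcup_{i\le m} E_i)+1$ for all $i$, then there is a rainbow matching (a matching using one edge from each $E_i$, all distinct). Strictly speaking this is stated as a conjecture in the excerpt, so a complete proof is presumably not available; what I would write is a reduction to the topological Hall-type machinery already invoked in the paper, reducing the problem to the connectivity bound \eqref{eta1} for independence complexes of line graphs, and explain precisely where the bound falls short of what is needed. The plan is to set $G = \bigcup_{i\le m}E_i$ (a bipartite graph) and $\Delta = \Delta(G)$, and to view a rainbow matching as a system of representatives: we seek $e_i \in E_i$ such that $\{e_1,\dots,e_m\}$ is an independent set in the line graph $L(G)$.

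\textbf{Topological Hall.} First I would recall the topological Hall theorem of Aharoni--Haxell \cite{ah}: given sets $E_1,\dots,E_m$ of vertices of a complex $\cc$, if for every $J\subseteq[m]$ one has $\eta\bigl(\cc[\bigcup_{i\in J}E_i]\bigr)\ge |J|$, then there is an independent (in $\cc$) system of representatives $e_i\in E_i$. Here $\cc = \ci(L(G))$ is the matching complex of $G$, and $\cc[\bigcup_{i\in J}E_i]$ is the matching complex of the subgraph $G_J$ of $G$ spanned by the edges in $\bigcup_{i\in J}E_i$. By \eqref{eta1} applied to $G_J$, which is again bipartite and has maximum degree at most $\Delta$, we get $\eta(\ci(L(G_J))) \ge \dfrac{|V(G_J)|}{\Delta(G_J)+2} \ge \dfrac{|E(G_J)|}{\Delta+2}$ using that a bipartite graph with $e$ edges has at least... no, that inequality goes the wrong way. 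The correct route is: $G_J$ has at least $\max_{i\in J}|E_i| > \Delta+1$ edges, and a subgraph with $e$ edges and maximum degree $d$ has at least $2e/d \ge 2e/\Delta$ vertices; but we actually need a lower bound on $\eta$ in terms of $|J|$, so I would instead bound $|E(G_J)| \ge \sum_{i\in J}|E_i| \ge |J|(\Delta+2)$ only when the $E_i$ are disjoint --- and here they are disjoint, since the $E_i$ partition (or at least are pairwise disjoint subsets of) $E(G)$. So $|E(G_J)| = \sum_{i\in J}|E_i| \ge |J|(\Delta+2)$, and since for a bipartite graph $|V(G_J)| \ge |E(G_J)|/\Delta \cdot$ ... again the vertex count can be as small as roughly $|E(G_J)|/\Delta$, which only yields $\eta \ge |E(G_J)|/(\Delta(\Delta+2))$, far too weak.

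\textbf{The real obstacle.} This is exactly the gap: the connectivity bound \eqref{eta1} is stated in terms of $|V|/(\Delta+2)$, whereas for a rainbow-matching statement keyed to $|E_i| > \Delta+1$ one needs a bound of the form $\eta(\ci(L(G_J))) \ge \min_{i\in J}|E_i|/(\text{something close to }\Delta)$ --- i.e.\ controlled by the \emph{edge} counts of the individual $E_i$, not the vertex count of the union. The clean way to get this is a \emph{weighted} or \emph{colorful} connectivity bound: one wants that if every colour class $E_i$ restricted to $G_J$ has more than $\Delta+1$ edges, then $\eta(\ci(L(G_J))) \ge |J|$. Such a statement would follow from a deficiency version of \eqref{eta1} of the type $\eta(\ci(L(G))) \ge \nu^*(G)/c$ where $\nu^*$ is the fractional matching number and $c$ is a constant --- since a union of $|J|$ edge-disjoint sets each of fractional-matching-robust size $>\Delta+1$ forces $\nu^*(G_J)$ to be large --- but no bound with the constant $c$ small enough ($c\to 1$) is known; getting $c=1$ here is essentially equivalent to the conjecture itself. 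Therefore the honest writeup is: (i) record the reduction above to the topological Hall theorem; (ii) observe that \eqref{eta1} and its known refinements give the weaker bound $|E_i| > c\Delta$ for some constant $c$ (tracking constants through Haxell's argument gives $c$ around $2$, recovering the Aharoni--Berger--Ziv-type results and the corollary in the excerpt); and (iii) state that closing the gap from $c\Delta$ down to $\Delta+1$ is the crux, equivalent to strengthening the topological connectivity estimate for matching complexes of bipartite graphs to the conjecturally optimal $\eta(\ci(L(G))) \ge |V(G)|/(\Delta+1)$ in an edge-colour-robust form. I would present the first two items as a proposition with proof and the third as the precise remaining obstacle, rather than claim a full proof of the conjecture.
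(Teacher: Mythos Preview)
Your assessment is correct: the statement is a \emph{conjecture} in the paper, not a theorem, and the paper offers no proof of it. There is therefore nothing to compare your attempt against; the paper merely states \cjref{strongest}, rephrases it in terms of independent transversals in line graphs, and records the single known tight example and the failure for multisets.

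Your exploratory reduction via the topological Hall theorem and the bound \eqref{eta1} is a reasonable sketch of why the conjecture is out of reach with current connectivity estimates, and your identification of the gap (needing an edge-count-sensitive connectivity bound rather than the vertex-count bound $|V|/(\Delta+2)$) is essentially the right diagnosis. One small correction to your discussion: you assert that the $E_i$ are pairwise disjoint (``since the $E_i$ partition\ldots''), but the conjecture as stated allows the $E_i$ to be arbitrary (distinct) sets of edges, not necessarily disjoint; the paper's remark that the conjecture fails for \emph{multisets} confirms that overlaps between different $E_i$ are permitted, only repetitions within a single $E_i$ are excluded. This does not affect your conclusion that the topological-Hall route stalls, but your inequality $|E(G_J)| = \sum_{i\in J}|E_i|$ would need $\ge$ rather than $=$ dropped or replaced by $|E(G_J)| \ge \max_{i\in J}|E_i|$ in the general case.
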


Re-phrased, this conjecture reads:
If $H$ is a bipartite multigraph, $G=L(H)$ and $V_i \subseteq V(G)$ satisfy  $|V_i| \ge \Delta(H)+2$ for all $i$, then there exists an independent set in $G$ (namely a matching in $H$) meeting all $V_i$'s.

\begin{remark}

\begin{enumerate}
\item  We know only one example, taken from \cite{jin,yuster}, in which $|V_i| \ge \Delta(H)+1$ does not suffice.
Take three vertex disjoint copies of $C_4$, say $A_1,A_2,A_3$. Number
the edges of $A_i$ cyclically as $a_i^j ~(j=1\ldots 4)$.
Let $E_1=\{a_1^1,a_1^3,a_3^1\}$, ~~ $E_2=\{a_1^2,a_1^4,a_3^3\}$,
$E_3=\{a_2^1,a_2^3,a_3^2\}$ and $E_4=\{a_2^2,a_2^4,a_3^4\}$.
Then $\Delta(\bigcup_{i \le m} E_i)=2$, ~~$|E_i|=3$ and there is no
rainbow matching.

\item The conjecture is false if the sets $E_i$ are allowed to be multisets. We omit the example showing this.
\end{enumerate}

\end{remark}

An even stronger version of the conjecture is:
\begin{conjecture}
If the edge set of a graph $H$ is partitioned into sets $E_1, \ldots ,E_m$ then there exists a matching $M$ satisfying $|M \cap E_i| \ge \floor*{ \frac{|E_i|}{\Delta(H)+2}}$  for all $i \le m$

\end{conjecture}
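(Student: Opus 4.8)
\medskip
\noindent\textbf{A proposed line of attack.} The plan is to follow the route that led from the connectivity estimate \eqref{eta1} to \tref{lineimprovement}, but with a stronger input adapted to line graphs. First, reduce the statement to a rainbow-matching form. For each $i\le m$ set $b_i=\floor*{\frac{|E_i|}{\Delta(H)+2}}$ and choose pairwise disjoint subsets $E_i^1,\dots,E_i^{b_i}\subseteq E_i$, each of size exactly $\Delta(H)+2$; these are automatically disjoint across different $i$ as well. Any matching of $H$ meeting every $E_i^j$ then has at least $b_i$ edges inside $E_i$, so it suffices to prove: \emph{if $F_1,\dots,F_N$ are edge sets of a graph $H$ with $|F_k|\ge\Delta(H)+2$ for all $k$, then $H$ has a matching meeting every $F_k$}. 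For bipartite $H$ this is exactly \cjref{strongest}, so the content of the present conjecture beyond \cjref{strongest} lies in the non-bipartite case (and in the fact that the block reduction loses nothing).

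Second, translate into topology. Put $G=L(H)$ and regard each $V_k:=F_k$ as a set of vertices of $G$; a matching of $H$ meeting every $F_k$ is an independent transversal of $(V_1,\dots,V_N)$ in $G$. By the topological Hall theorem of \cite{ah}, in the form used to derive \tref{lineimprovement}, such a transversal exists provided $\eta\bigl(\ci(G[\bigcup_{k\in J}V_k])\bigr)\ge|J|$ for every $J\subseteq[N]$. Since $G[\bigcup_{k\in J}V_k]=L(H_J)$ with $H_J$ the subgraph of $H$ on edge set $\bigcup_{k\in J}F_k$, and $|E(H_J)|\ge|J|(\Delta(H)+2)\ge|J|(\Delta(H_J)+2)$, the whole conjecture would follow from the single inequality
\begin{equation}\label{keyeta}
\eta\bigl(\ci(L(K))\bigr)\ \ge\ \frac{|E(K)|}{\Delta(K)+2}\qquad\text{for every graph }K .
\end{equation}
This is the ``matching-complex'' analogue of \eqref{eta1}: because $\Delta(L(K))$ can be as large as $2\Delta(K)-2$, \eqref{keyeta} improves the generic bound by essentially a factor of two, just as $\eta(\ci(K))\ge|V(K)|/(\Delta(K)+1)$ does for chordal $K$.

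For \eqref{keyeta} the natural tools --- Meshulam's recursive lower bounds on $\eta$ (obtained by deleting a vertex, or a vertex together with its closed neighbourhood) and Haxell's alternating-forest argument behind \tref{haxell} --- only yield the weaker bound with $2\Delta(K)$ in the denominator, since the closed neighbourhood of an edge $uv$ in $L(K)$ has up to $\deg_K(u)+\deg_K(v)-1\approx 2\Delta(K)$ vertices. Reaching $\Delta(K)+2$ demands using global structure: one route is a more frugal Meshulam recursion that deletes small, carefully chosen stars of $K$, reducing \eqref{keyeta} to a covering lemma for graphs of bounded degree; another is to run the Haxell forest with a weighted potential that charges each blocked edge to a \emph{pair} of already-chosen edges. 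Either way a new combinatorial idea seems to be needed, and this is where I expect the real difficulty to lie.

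Consequently, I would also pursue \eqref{keyeta}, hence the conjecture, on tractable classes before the general case: forests (which would recover and extend \tref{pathscasetotal}), complete bipartite graphs (connecting with \cjref{equirep00} and its verified cases $m\le 3$), and bipartite $K$ in general, where line graphs can be traded for systems of disjoint representatives of the edge classes and the hypergraph connectivity bound of \cite{agn} becomes available. A further obstacle peculiar to non-bipartite $K$ is that $L(K)$ need not be the line graph of a bipartite graph, so odd components and a Gallai--Edmonds-type analysis enter; I expect these can be absorbed by arguing throughout with $\eta$ rather than with explicit matchings.
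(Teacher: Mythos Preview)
The statement you are attempting is listed in the paper as an open \emph{conjecture}; the paper offers no proof, so there is nothing to compare your argument against. What you have written is, accordingly, not a proof but a research plan, and you are candid about this. Let me comment on the plan.

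Your reduction is sound: the block decomposition into sets $E_i^j$ of size $\Delta(H)+2$ and the appeal to the topological Hall theorem of \cite{ah} are exactly the route that yields \tref{lineimprovement} from \eqref{eta1}, and your derivation of the conjecture from the single inequality
\[
\eta\bigl(\ci(L(K))\bigr)\ \ge\ \frac{|E(K)|}{\Delta(K)+2}\qquad\text{for every graph }K
\]
is correct. The problem is that this inequality is \emph{false}. Take $K=K_{n,n}$: then $\ci(L(K))$ is the chessboard complex $M_{n,n}$, whose topological connectivity was determined in \cite{lovaszziv} (with sharpness due to Shareshian and Wachs). One gets $\eta(M_{n,n})=\min\bigl(n,\lfloor(2n+1)/3\rfloor\bigr)$, which for $n\ge 5$ equals $\lfloor(2n+1)/3\rfloor\approx \tfrac{2n}{3}$. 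Your inequality would require $\eta(M_{n,n})\ge n^2/(n+2)\approx n$; already for $n=5$ it asks for $\eta\ge 4$ while in fact $\eta=3$. So the ``key lemma'' you isolate cannot hold, and the approach collapses at precisely the point you flagged as the real difficulty.

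This does not say the conjecture is false --- the topological Hall criterion is only sufficient, and the chessboard example merely shows that connectivity alone is too crude to deliver the bound $\Delta(H)+2$. Any successful attack will have to exploit the \emph{partition} structure of the $E_i$ (or at least their disjointness) rather than pass through a uniform connectivity estimate for $\ci(L(K))$. Your suggestion to first handle forests and $K_{n,n}$ is reasonable, but note that even the bipartite case with disjoint colour classes is \cjref{strongest}, itself open.
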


\subsection{Over-representation vs. under-representation and representing general systems of sets}

It is easy to find examples falsifying the above conjectures when the sets that are to be fairly represented do not form a partition. Why is that? A possible explanation is that  a more natural formulation of our conjectures is not in terms of over-representation, but in terms of under-representation by a large set. Here is a   conjecture in this direction:

\begin{conjecture}
For every $m$ there exists a number $c(m)$ for which the following is true: if $G$ is a bipartite graph and  $E_1, \ldots ,E_m$ are any sets of edges, then there exists a matching $S$ in $G$ of size at least $\frac{|E(G)|}{\Delta(G)}-c(m)$ such that

\begin{equation} \label{underrep}
|S \cap E_i|\le \lceil\frac{|E_i|}{\Delta(G)}\rceil~~\text{ for all }~~i \le m
\end{equation}
\end{conjecture}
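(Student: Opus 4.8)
The line of attack I would take is through the linear-programming relaxation of the matching problem followed by an iterated-rounding argument; since only the \emph{existence} of a constant $c(m)$ is asserted, it suffices to lose a number of edges bounded in terms of $m$. The key first observation is that the ``uniform'' fractional matching $x_e=1/\Delta(G)$ ($e\in E(G)$) already meets all the targets fractionally: it lies in the matching polytope of $G$, since at each vertex $v$ its fractional degree is $d(v)/\Delta(G)\le 1$; it has total weight $\sum_e x_e=|E(G)|/\Delta(G)$; and for every $i$ it satisfies $x(E_i)=|E_i|/\Delta(G)\le\ceil*{|E_i|/\Delta(G)}=:b_i$. Consequently the linear program $\max\{\sum_e x_e:\ x\in\mathrm{MP}(G),\ x(E_i)\le b_i\ (i\le m)\}$ — where $\mathrm{MP}(G)$ is the (bipartite, hence integral, degree-described) matching polytope — has optimum value at least $|E(G)|/\Delta(G)$. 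Note that the sets $E_i$ enter only through the linear inequalities $x(E_i)\le b_i$, so there is no need to assume that they are disjoint or cover $E(G)$; this also explains why a naive attempt such as ``take the largest colour class of a proper $\Delta(G)$-edge-colouring'' is doomed, since a colouring aligned with the $E_i$ can force every colour class to over-represent some $E_i$.

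The second and central step is to round an optimal \emph{vertex} solution $x^\ast$ of this LP by iterated relaxation for matching (more generally matroid-intersection) polytopes with a bounded number of side constraints. One exhausts integral coordinates in the usual way (fix $x^\ast_e=1$ into the matching and delete $e$, decrementing the relevant $b_i$'s; delete $e$ if $x^\ast_e=0$); if $x^\ast$ is still non-integral, a rank/token count shows that, because at this stage every tight vertex-degree inequality has at least two incident fractional edges, the tight degree inequalities cannot account for all the fractional variables, so some budget inequality $x(E_i)\le b_i$ is tight and involves at most a constant number $k$ of fractional edges — delete that inequality and repeat. (The count is cleanest when the $E_i$ are disjoint; in general one first passes to the $\le 2^m$ atoms of the Boolean algebra they generate, inflating constants by a factor $2^m$, which is harmless here.) Since each of the $m$ budget inequalities is dropped at most once and fixed edges only add to the objective, this yields an integral matching $M$ with $|M|\ge|E(G)|/\Delta(G)$ and, for every $i$, $|M\cap E_i|\le b_i+k$: when the $i$-th inequality was dropped at most $b_i$ of its edges had been fixed to $1$, and only its $\le k$ then-fractional edges can be added afterwards.

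It remains to clean up. While some $E_i$ has $|M\cap E_i|>b_i$, delete from $M$ an edge lying in $E_i$; deleting an edge never increases $|M\cap E_j|$ for any $j$, so each deletion lowers $\sum_i\max(0,|M\cap E_i|-b_i)$ by at least one, and after at most $mk$ deletions we reach a matching $S$ with $|S\cap E_i|\le b_i=\ceil*{|E_i|/\Delta(G)}$ for all $i$ and $|S|\ge|E(G)|/\Delta(G)-mk$; thus $c(m)=mk$ (or $c(m)=O(m\cdot 2^m)$ in the fully general case) works. The delicate part is the second step: one must make the rank/token argument go through inside $\mathrm{MP}(G)$ intersected with the $m$ budget constraints and, above all, bound by an \emph{absolute} constant the number of not-yet-rounded edges of a budget set at the moment its constraint is dropped — it is this bound, and not the (trivial) loss in $|S|$, that governs everything. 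A more combinatorial substitute for Steps~2--3 would be to start from a maximum matching and repair each over-represented $E_i$ by alternating-path exchanges trading an $E_i$-edge for a non-$E_i$-edge without shrinking the matching; bounding the total number of such exchanges, however, seems to need essentially the same estimates.
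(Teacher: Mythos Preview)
The statement you are attempting to prove is presented in the paper as an open \emph{conjecture}; the paper offers no proof, only the remark that perhaps $c(m)=m/2$ suffices. So there is no ``paper's proof'' to compare against, and the relevant question is whether your argument actually settles the conjecture. It does not: Step~2, the iterated-rounding step, contains a genuine gap.

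Your first step is fine: the uniform assignment $x_e=1/\Delta(G)$ is a feasible fractional solution of the LP and certifies optimum value at least $|E(G)|/\Delta(G)$. The error is the inference ``the tight degree inequalities cannot account for all the fractional variables, so some budget inequality $x(E_i)\le b_i$ is tight and involves at most a constant number~$k$ of fractional edges.'' The first clause is correct (since the bipartite matching polytope is integral, at a fractional extreme point at least one tight side constraint is needed), but the second does \emph{not} follow: needing at most $m$ tight budget constraints says nothing about how many fractional edges each one touches. In Singh--Lau style arguments one locates a \emph{degree} constraint with few incident edges; no analogous bound holds for an arbitrary packing constraint.

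Here is a concrete counterexample to the key claim. Take $G=P_{2n}$, the path on $2n$ vertices, with edges $e_1,\dots,e_{2n-1}$ and $\Delta(G)=2$. Let $m=1$ and $E_1=\{e_1,e_3,\dots,e_{2n-1}\}$, so $b_1=\lceil n/2\rceil$. The point $x^\ast=(\tfrac12,\dots,\tfrac12)$ is an extreme point of your LP: all $2n-2$ internal degree constraints are tight and linearly independent (rank $2n-2$ in $\mathbb{R}^{2n-1}$), and the single budget constraint $\sum_{i\ \mathrm{odd}} x_i=n/2$ supplies the remaining dimension. Every coordinate is fractional, and the sole tight budget constraint involves $n$ fractional edges --- not a constant. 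If you drop it and re-optimise you land at the perfect matching $\{e_1,e_3,\dots,e_{2n-1}\}$, which violates the budget by roughly $n/2$; your cleanup then deletes $\sim n/2$ edges and leaves a matching of size $\sim n/2$, whereas the conjecture asks for size $\ge (2n-1)/2-c(1)$. (The conjecture itself is not in trouble here: the matching $\{e_2,e_4,\dots,e_{2n-2}\}$ has size $n-1$ and meets the budget with room to spare.)

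So the LP/iterated-relaxation framework, as you describe it, does not yield a constant depending only on $m$. A rescue would require a fundamentally different rounding scheme --- one that controls the violation of each dropped budget constraint independently of its support size --- and no such off-the-shelf result for matroid intersection with general $\{0,1\}$ packing constraints is known to give an additive $O_m(1)$ guarantee. Randomised swap rounding with concentration gives only $O(\sqrt{b_i\log m})$-type deviations, which again is not constant. In short, your Step~1 is the natural opening move, but Step~2 is where the real difficulty of the conjecture lies, and the token-counting heuristic does not resolve it.
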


Possibly $c(m)=\frac{m}{2}$ may suffice. When the $E_i$'s form a partition,
condition \eqref{underrep} implies that all but $c(m)$ sets are fairly represented.
Of course, a stronger condition is required to imply Conjecture \ref{equirep00}.
The reason that the under-representation formulation is  natural is that if the sets $E_i$ form a partition, the condition in \eqref{underrep} defines a generalized partition matroid.  The conjecture thus concerns  representation by a set belonging to the intersection of three matroids.

\section{Fair representation by independent sets in paths:  a Borsuk-Ulam  approach}\label{paths}

In this section we  prove Theorem \ref{pathscasetotal}. Following an idea from the proof of the ``necklace theorem" \cite{Al}, we shall use the Borsuk-Ulam theorem.
In the necklace problem two thieves want to divide a necklace with $m$ types of beads, each occurring in an even number of beads, so that
the beads of every type are evenly split between the two.
 The theorem is that the thieves can achieve
this goal using at most $m$ cuts of the necklace. In our case, we shall employ as ``thieves'' the sets of odd and even points, respectively, in a sense to be explained below.

We first quote the Borsuk-Ulam theorem. As usual, for $n\ge 1$, $S^{n}$ denotes the
set of points $\vec{x}=(x_1, \ldots, x_{n+1}) \in \mathds{R}^{n+1}$ satisfying $\sum_{i \le n+1}x_i^2=1$.

\begin{theorem}[Borsuk-Ulam]
For all $n\ge 1$, if $f: S^{n}\rightarrow \mathds{R}^{n}$ is a continuous odd function, then there exists $\vec{x}\in S^n$ such that $f(\vec{x})=0$.
\end{theorem}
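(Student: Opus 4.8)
The plan is to derive the theorem from Tucker's combinatorial lemma; everything except that lemma will be elementary, and the lemma itself may alternatively just be quoted from \cite{mat}. Recall \emph{Tucker's lemma}: if $T$ is a triangulation of the closed $n$-ball $B$ whose restriction to the boundary sphere $\partial B = S^{n-1}$ is invariant under the antipodal map, and $\lambda$ labels the vertices of $T$ by values in $\{\pm 1, \pm 2, \ldots, \pm n\}$ with $\lambda(-v) = -\lambda(v)$ for every boundary vertex $v$, then $T$ contains a \emph{complementary edge}, i.e.\ an edge $\{u,v\}$ with $\lambda(u) = -\lambda(v)$.

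First I would set up the reduction. Suppose for contradiction that $f = (f_1, \ldots, f_n) \colon S^n \to \mathds{R}^n$ is continuous and odd but nowhere zero. For $x \in S^n$ let $\iota(x)$ be the least index $i$ attaining $\max_j |f_j(x)|$, and put $\ell(x) = \sign\big(f_{\iota(x)}(x)\big)\cdot \iota(x) \in \{\pm 1, \ldots, \pm n\}$; this is well defined because $f(x) \neq 0$, and oddness of $f$ yields $\ell(-x) = -\ell(x)$ for all $x$. Write $S^n$ as the union of its two closed hemispheres, each an $n$-ball, glued along the equator $S^{n-1}$; let $B$ be the upper hemisphere, so $\partial B = S^{n-1}$ is antipodally invariant. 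For each $\delta > 0$ pick a triangulation $T_\delta$ of $B$ of mesh $< \delta$ whose boundary triangulation is antipodally invariant, and restrict $\ell$ to its vertices. The boundary hypothesis of Tucker's lemma holds, since $-v \in S^{n-1}$ whenever $v \in S^{n-1}$ and $\ell(-v) = -\ell(v)$ there. Hence $T_\delta$ has a complementary edge $\{u_\delta, v_\delta\}$ with $\ell(u_\delta) = -\ell(v_\delta)$.

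The endgame is a compactness argument. Let $\delta \to 0$ along a sequence. The common value $\ell(u_\delta) = -\ell(v_\delta)$ ranges over the finite set $\{\pm 1, \ldots, \pm n\}$, so by pigeonhole it equals a fixed value on a subsequence; after possibly swapping the names of $u_\delta$ and $v_\delta$ we may take this value to be a positive index $i$. By compactness of $S^n$ we may further assume $u_\delta, v_\delta \to x^\ast$ for some $x^\ast \in S^n$. Now $\ell(u_\delta) = i$ means $\iota(u_\delta) = i$ and $f_i(u_\delta) > 0$, while $\ell(v_\delta) = -i$ means $f_i(v_\delta) < 0$; letting $\delta \to 0$ and using continuity gives $f_i(x^\ast) \ge 0$ and $f_i(x^\ast) \le 0$, so $f_i(x^\ast) = 0$. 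Since $\iota(u_\delta) = i$, we also have $\|f(u_\delta)\|_\infty = |f_i(u_\delta)| \to |f_i(x^\ast)| = 0$, whence $\|f(x^\ast)\|_\infty = 0$ and $f(x^\ast) = 0$, contradicting our assumption. (For $n = 1$ this reduces to the intermediate value theorem: $f$ has opposite signs at $x$ and $-x$.)

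The main obstacle---indeed the one substantive step---is Tucker's lemma. Its standard proof is a parity argument in the spirit of Sperner's lemma: form a graph whose nodes are the simplices of $T$ carrying an ``almost complementary'' set of labels, show each node has degree at most $2$, and show, by induction on $n$ using the antipodal symmetry of $\partial B$, that the number of boundary nodes of odd degree is odd; following a path from such a boundary node then lands on a genuinely complementary simplex in the interior. The delicate point is controlling that boundary count through the induction. Since the present paper quotes topological input freely, an acceptable alternative is simply to cite Tucker's lemma (or equivalently the Lusternik--Schnirelmann--Borsuk covering version of the theorem) from \cite{mat} and record only the reduction above.
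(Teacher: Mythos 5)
Your argument is correct, but note that the paper does not prove the Borsuk--Ulam theorem at all: it is quoted as classical background (with a pointer to Matou\v{s}ek's book) and used as a black box in the proof of Theorem \ref{pathscasetotal}. So there is nothing in the paper to compare against; what you have supplied is a genuine proof where the paper has a citation. Your route --- the standard reduction to Tucker's lemma --- is sound: the labeling $\ell(x)=\sign\bigl(f_{\iota(x)}(x)\bigr)\cdot\iota(x)$ is well defined precisely because $f$ is assumed nowhere zero, it is odd because $|f_j(-x)|=|f_j(x)|$ forces $\iota(-x)=\iota(x)$ while the sign flips, and the endgame correctly upgrades $f_i(x^\ast)=0$ to $f(x^\ast)=0$ via $\|f(u_\delta)\|_\infty=|f_i(u_\delta)|$, which is the step people most often omit. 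Two points you gloss over but which are standard and harmless: the existence of arbitrarily fine triangulations of the hemisphere whose boundary triangulation is antipodally symmetric, and the identification of the closed upper hemisphere with the standard ball $B^n$ by a homeomorphism that is antipodal on the boundary (needed to apply Tucker's lemma as literally stated). The one substantive ingredient, Tucker's lemma itself, you only sketch; that is acceptable given that the paper freely cites topological tools of comparable depth, but be aware that all of the real content of Borsuk--Ulam is concentrated there, so either the parity/path-following induction must be carried out in full or the lemma must be cited. What your approach buys over the paper's bare citation is a combinatorial, Sperner-flavored proof that sits naturally alongside the paper's own use of Sperner's lemma in Section 6.
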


\begin{proof}[Proof of Theorem \ref{pathscasetotal}]
Let  $v_1, \ldots ,v_n$ be the vertices of $P_n$, ordered along the path. In order to use the Borsuk-Ulam theorem, we first make the problem continuous, by replacing each vertex $v_p$ by the characteristic function of the $p$th of $n$ intervals of length $\frac{1}{n}$ in $[0,1]$, open on the left and closed on the right, except for the firs interval which is closed on both sdeis.  We call
the interval $(\frac{p-1}{n},\frac{p}{n}]$ ($[0,\frac{1}{n}]$ for $p=1$) a {\em bead} and denote it by $B_p$. Let $\chi_i$ be the characteristic function of
$\bigcup_{v_p \in V_i}B_p$.  Let $g$ be the characteristic function of the union of odd beads on the path, and let $h(y)=1-g(y)$.

Given a point $\vec{x}\in S^m$, let $z_k = \sum_{j\le k} x_j^2$, for all $k=0,\ldots,m+1$ (where $z_0=0, z_{m+1}=1$).

For each $i \le m$ define a function $f_i: ~S^{m} \to \mathds{R}$ by:

$$f_i(x_1, \ldots, x_m)=\sum_{1 \le k \le m}\int_{z_{k-1}}^{z_k}(g(y) - h(y))\chi_i(y) \sign(x_k)dy$$

Here, as usual, $\sign(x)=0$ if $x = 0$,~~ $\sign(x)=1$ if $x > 0$,~~ and $\sign(x)=-1$ if $x<0$.
Since  the set of points of discontinuity of the sign function is discrete, the functions $f_i$ are continuous.
The sign term  guarantees that $f_i(-\vec{x})=-f_i(\vec{x})$. Hence, by the Borsuk-Ulam theorem there exists a point
$\vec{w}=(w_1, \ldots,w_{m+1}) \in S^{m}$ such that $f_i(\vec{w})=0$ for all $i \in [m]$, where $z_k=\sum_{j\le k} w_j^2$, for all $k=0,\ldots,m+1$.

For $y \in [0,1]$ such that $y \in (z_{k-1},z_k]$ define $POS(y)=1$ if $w_k\ge 0$ and $POS(y)=0$ otherwise. Let $NEG(y)=1-POS(y)$.
Let

$$J_1(y)=POS(y)g(y)+ NEG(y)h(y), ~~J_2(y)=POS(y)h(y)+ NEG(y)g(y).$$

For fixed $i \in [m]$, the fact that $f_i(\vec{w})=0$ means that


$$\int_{y=0}^1 \chi_i(y)POS(y) [g(y)-h(y)]dy   = \int_{y=0}^1 \chi_i(y) NEG(y)[g(y)-h(y)]dy$$

Shuffling terms this gives:

\begin{equation}\label{balanced1} \int_{y=0}^1 \chi_i(y)[POS(y) g(y)+ NEG(y)h(y)]dy   = \int_{y=0}^1 \chi_i(y)[POS(y) h(y)+ NEG(y)g(y)]dy
\end{equation}

Denoting the integral $\int_0^1u(y)dy$ of a function $u$ by $|u|$, and noting that $J_1(y)+J_2(y)=1$ for all $y\in[0,1]$,
Equation \eqref{balanced1} says that
\begin{equation}\label{balanced} |\chi_iJ_1| = |\chi_iJ_2|=\frac{|\chi_i|}{2} \end{equation}

for every $i \le m$.

    A bead contained in an interval  $(z_{k-1},z_k]$ is called {\em positive} if $w_k\ge 0$ and {\em negative} otherwise.
    For $k=1, \ldots ,m$ let $T_k$ be the bead containing $z_k$. The beads that are equal to $T_k$ for some $k$ are called  {\em transition beads}. Let $F$ be the set of transition beads, and let $Z= \bigcup F$. We next remove the transition beads from $J_j$, by defining:

$$\tilde{J}_j(y)=\min(J_j(y),1-\chi_Z(y))$$

Thus $\tilde{J}_1$ is the characteristic function of the union of those beads that are either positive and odd, or negative and even, and $\tilde{J}_2$ is the characteristic function of the union of those beads that are either positive and even, or negative and odd. Let $I_j ~~(j=1,2)$ be the set of vertices $v_p$  on whose bead $B_p$ the function $\tilde{J}_j$ is positive.
Since the transition beads have been removed, $I_1$ and $I_2$ are independent.

For  $i \le m$ and $j=1,2$ let $c(i,j)$ be the amount of loss of $\tilde{J_j}$ with respect to $J_j$ on beads
belonging to $V_i$, namely beads $B_p \in F$ such that $v_p \in V_i$. Formally,

$$c(i,j)
=\sum_{v_p \in V_i, B_p \in F}|\chi_{B_p}\cdot(J_j-\tilde{J}_j)|$$

Then
\begin{equation}\label{lossi} c(i,1)+c(i,2)=\frac{1}{n}|\{v_p \in V_i, B_p \in F\}|
\end{equation}
 and $\sum_{i \le m} c(i,1)+\sum_{i \le m} c(i,2)=\frac{m}{n}$. Hence
for either $j=1$ or $j=2$ we have
$\sum_{i \le m} c(i,j)\le \frac{m}{2n}$. Let $I=I_j$ for this particular $j$, and denote $c(i,j)$ by $b_i$.
Then, by \eqref{balanced} and \eqref{lossi} we have $|I \cap V_i| \ge \frac{|V_i|}{2}-b_i$, while $\sum_{i \le m}b_i \le \frac{m}{2}$. Namely, the set $I$ satisfies the conditions of the theorem.

\end{proof}

\begin{example} Let $P=P_4$, the path with $4$ vertices $v_i, 1 \le i \le 4$, and let $V_1=\{v_1,v_2,v_4\}$ and $V_2=\{v_3\}$. Then one possible set of points  given by the Borsuk-Ulam theorem is $z_1=\frac{1}{8},~z_2=\frac{5}{8}$, and $w_1>0,~w_2<0,~w_3 >0$ (or with all three signs reversed), as illustrated in Figure~\ref{fig0}. Thus, $J_1$ is the characteristic function of $[0,\frac{1}{8}]\cup (1,2] \cup(\frac{5}{8},\frac{3}{4}]$ and $J_2$ is the characteristic function of $(\frac{1}{8},1] \cup  (\frac{1}{2},\frac{5}{8}]\cup (\frac{3}{4},1]$. The set $I_1$ is obtained from $J_1$ by removing the $z_i$-infected beads, namely $I_1=\{v_2\}$, and then $I_2=\{v_4\}$. In this case $\sum_{i \le m} c(i,j)= \frac{m}{2n}=\frac{2}{4}=\frac{1}{2}$
for both $j=1$ and $j=2$, and thus we can choose $I$ as either $I_1$ or $I_2$. This is what the proof gives, but in fact in this example we can do better - we can take $I=\{v_1,v_3\}$, in which only $V_1$ is under-represented.

\begin{figure}[h!]
\begin{center}
\includegraphics[scale=0.3]{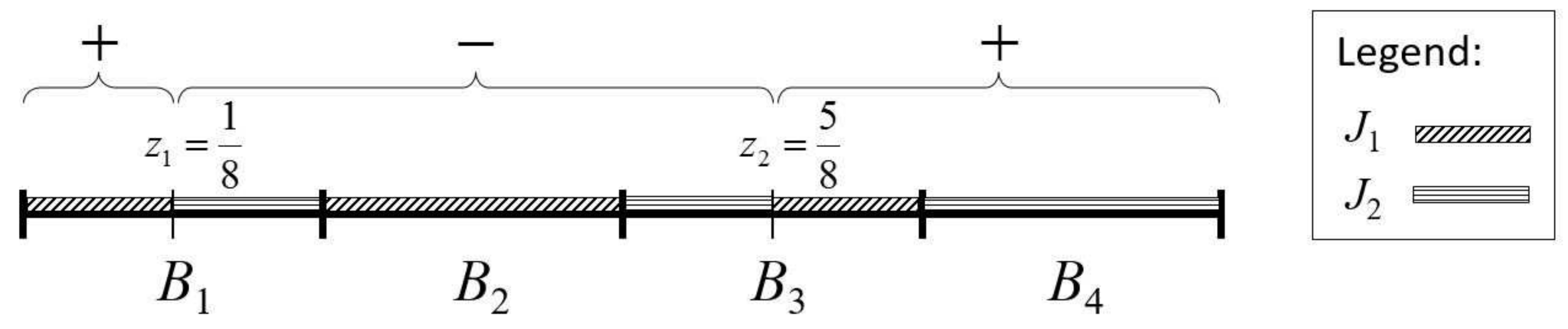}
\end{center}
\caption{An example with four vertices divided into two sets.}
\label{fig0}
\end{figure}

\end{example}

\begin{remark} \nopagebreak \hfill
\begin{enumerate}
\item
The inequality $\sum_{i \le m}b_i \le \frac{m}{2}$ can possibly be
improved, but not much.
Namely, there are examples in which  the minimum of the sum $\sum_{i
\le m}b_i$ in the theorem is $\frac{m-1}{2}$. To see this, let $m=2k+1$, and let each $V_i$ be of size $2k$. Consider a sequence of length $2k \times (2k+1)$, in which the $(i-1)m+2j-1$-th element belongs to $V_i$ ($i =1, \ldots, 2k, ~~j=1, \ldots,k+1$), and the rest of the elements are chosen in any way so as to satisfy the condition $|V_i|=2k$. For example, if $k=2$ then the sequence is of the form:

$$1*1*1-2*2*2-3*3*3-4*4*4$$

where the $*$s can be filled in any way that satisfies $|V_i|=4$ (namely, four of them are replaced by the symbol $5$, and one is replaced by $i$ for each symbol $i=1,2,3,4$. The dashes are there to facilitate the reference to the four stretches). If $S$ is an independent set in the path then we may assume that $S$ contains no more than $k$ elements from the same $V_i$ from each stretch (for example, in the first stretch of the example above  choosing all three $1$s will result in deficit of 2 in the other sets),
 Thus $|S| \le 2k\times k$, which is  $\frac{m-1}{2}$ short of half the length of the path.

\item It may be of interest to find the best bounds as a function of the sizes of the sets $V_i$ and their number. Note that in the example above the size of the sets is almost equal to their number. As one example, if
 all $V_i$'s are of size $2$, then the inequality can be improved to: $\sum_{i \le m}b_i \le \frac{m}{3}$. To see this, look at the multigraph obtained by adding to $P_n$ the pairs forming the sets $V_i$ as edges.
In the resulting graph the maximum degree is $3$, and hence by Brooks' theorem it is $3$-colorable. Thus there is an independent set of size at least $\frac{n}{3}$, which represents all $V_i$'s apart from at most $\frac{m}{3}$ of them.

\end{enumerate}

\end{remark}

\section{Fair representation by independent sets in cycles: using a theorem of Schrijver}\label{paths2}

In this section we shall prove Theorem \ref{pathscaseindividual}.  The proof
uses a result of Schrijver \cite{sc}, which is a strengthening of a theorem of Lov\'asz:

\begin{theorem}[Schrijver \cite{sc}]
\label{t21}
For integers $k,n$ satisfying $n>2k$ let $K=K(n,k)$ denote the graph whose vertices
are all  independent sets of size $k$ in a cycle $C$ of length
$n$, where two such vertices are adjacent iff the corresponding
sets are disjoint. Then the chromatic number of $K$ is
$n-2k+2$.
\end{theorem}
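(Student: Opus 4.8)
The plan is to prove the two inequalities $\chi(K)\le n-2k+2$ and $\chi(K)\ge n-2k+2$ separately; they are of very different character.

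\emph{Upper bound.} Here I would discard the independence constraint altogether. Since $K=K(n,k)$ is a subgraph of the full Kneser graph on all $k$-subsets of $[n]$, it suffices to colour the latter with $n-2k+2$ colours, and the classical explicit colouring does this: give a $k$-set $A$ the colour $\min\{\min A,\ n-2k+2\}$. Two disjoint $k$-sets cannot both receive a colour $c<n-2k+2$ (both would then contain $c$), nor can both receive colour $n-2k+2$, since then both would lie in the $(2k-1)$-element set $\{n-2k+2,\dots,n\}$. Restricting this colouring to the independent $k$-sets in the cycle gives $\chi(K)\le n-2k+2$.

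\emph{Lower bound.} For this I would run the B\'ar\'any--Gale proof of Lov\'asz's theorem, sharpened so that only independent sets are ever produced. Put $d=n-2k$ and suppose, for contradiction, that $K$ has a proper colouring $c$ with colour set $\{1,\dots,d+1\}$. Fix a configuration $P=\{p_1,\dots,p_n\}$ of points on $S^d$, indexed by $[n]$, to be chosen below, and for each colour $t$ set $U_t=\{\,y\in S^d:\ \text{the open hemisphere }\{z:\langle y,z\rangle>0\}\ \text{contains }p_i\ \text{for every }i\ \text{in some independent }k\text{-set coloured }t\,\}$. Each $U_t$ is open. If $P$ is chosen so that \emph{every} open hemisphere contains all points of at least one independent $k$-subset of $[n]$, then the $U_t$ cover $S^d$; and no $U_t$ contains an antipodal pair $y,-y$, for that would yield two independent $k$-sets of colour $t$ lying in complementary (hence disjoint) open hemispheres, so disjoint as sets, so adjacent in $K$, contradicting properness of $c$. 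But the Lusternik--Schnirelmann--Borsuk form of the Borsuk--Ulam theorem says $S^d$ cannot be covered by $d+1$ open sets none of which contains an antipodal pair. This contradiction shows $d+1$ colours do not suffice, i.e.\ $\chi(K)\ge d+2=n-2k+2$.

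\emph{The crux.} Everything hinges on the one geometric input: a set of $n$ points on $S^{\,n-2k}$ for which every open hemisphere ``sees'' a complete independent $k$-subset of the index set. This is precisely the step where Schrijver's argument must go beyond B\'ar\'any's: for Lov\'asz's theorem it suffices that every open hemisphere contain at least $k$ of the points, which Gale's lemma (points on the moment curve) delivers, but now those $k$ points must in addition be pairwise non-consecutive modulo $n$. I would attempt this by placing the points on the moment curve in a \emph{reordered} sequence --- morally ``all odd labels first, then all even labels'' --- so that any block of consecutive points along the curve corresponds to a set of labels in cyclic arithmetic progression with common difference $2$, hence automatically independent in the $n$-cycle; then, using that a linear functional restricted to the moment curve is a polynomial of degree $\le d$ and so changes sign at most $d$ times (and vanishes on at most $d$ of the points, by general position), one bounds the number of such blocks and of ``missed'' (equatorial) points and checks that in the positive hemisphere or in the negative hemisphere the surviving blocks together already contain $k$ mutually non-consecutive labels. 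Making this bookkeeping --- blocks, zeros, and the wrap-around between $n$ and $1$ --- actually close is the hard part. An alternative that avoids the explicit geometry is to encode the colouring as an antipodal labelling of $\{-1,0,1\}^n\setminus\{0\}$ and apply Tucker's lemma (or Ky Fan's lemma), with the independence constraint built into which sign patterns are permitted to ``activate'' a colour; the same combinatorial difficulty --- guaranteeing that a monochromatic complementary pair can be realized on independent sets --- resurfaces there.
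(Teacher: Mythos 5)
The paper does not prove this statement at all --- it is quoted as a known theorem of Schrijver with the citation \cite{sc} --- so there is no internal proof to compare against; what follows assesses your sketch on its own terms. Your upper bound is complete and correct: the colouring $A\mapsto\min\{\min A,\,n-2k+2\}$ is proper on the full Kneser graph, hence on the subgraph $K(n,k)$. Your reduction of the lower bound to a strengthened Gale lemma via the Lusternik--Schnirelmann--Borsuk theorem is also sound, and it is indeed the B\'ar\'any--Gale route that Schrijver's argument follows: the sets $U_t$ are open, they cover $S^d$ provided the point configuration has the stated property, and an antipodal pair in some $U_t$ would produce two disjoint stable $k$-sets of the same colour.

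The gap is exactly where you locate it, and it is not a formality: the entire content of Schrijver's theorem beyond Lov\'asz's is the existence of $n$ points on $S^{n-2k}$ such that every open hemisphere contains all points of some \emph{stable} $k$-subset, and you do not prove this. Moreover the device you propose --- reordering the moment-curve points as ``all odd labels, then all even labels'' so that each block of consecutive points is automatically stable --- fails as stated. For $n$ odd the full odd run $\{1,3,\dots,n\}$ is a difference-$2$ cyclic progression yet contains the cycle edge $\{n,1\}$; for $n$ even a block straddling the parity boundary is not a difference-$2$ progression at all and can contain a cycle edge (in $C_6$ the three consecutive points labelled $5,2,4$ contain the edge $\{4,5\}$). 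The standard repair is a sign alternation rather than a reordering: with $t_1<\cdots<t_n$ and $\gamma$ the moment curve in $\mathbb{R}^{d+1}$, $d=n-2k$, put $w_i=(-1)^i\gamma(t_i)/\lVert\gamma(t_i)\rVert$; then $w_i$ lies in $\{x:\langle a,x\rangle>0\}$ iff $(-1)^ip(t_i)>0$ for the nonzero polynomial $p(t)=\langle a,\gamma(t)\rangle$ of degree at most $d$, and a count of the at most $d$ sign changes of $p$ shows that $\{i:(-1)^ip(t_i)>0\}$ contains $k$ indices pairwise non-consecutive modulo $n$. Until that lemma (or the equivalent Tucker/Ky Fan bookkeeping you mention) is actually carried out, the lower bound --- which is the whole theorem --- is not established.
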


The hard part of this inequality is that the chromatic number of $K$ is at least
$n-2k+2$, which can be formulated as follows:

\begin{theorem}
\label{t211}
The family $\ci(n,k)$ of independent sets of size $k$ in the cycle $C_{n}$ cannot be partitioned into fewer than  $ n-2k+2$ intersecting families.
\end{theorem}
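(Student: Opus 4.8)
The plan is to derive Theorem~\ref{t211} from Theorem~\ref{t21} as a direct reformulation, since a proper coloring of $K(n,k)$ with $r$ colors is exactly the same datum as a partition of $\ci(n,k)$ into $r$ classes each of which is an independent set in $K(n,k)$, and a set of vertices of $K(n,k)$ is independent precisely when no two of the corresponding $k$-element independent sets in $C_n$ are disjoint, i.e.\ when the family is intersecting. So the statement ``$\chi(K(n,k))\ge n-2k+2$'' and the statement ``$\ci(n,k)$ cannot be partitioned into fewer than $n-2k+2$ intersecting families'' are logically equivalent, and the content of Theorem~\ref{t211} is just the lower-bound half of Schrijver's theorem, restated in the language of covering families. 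Thus the proof I would write is essentially one or two sentences making this dictionary explicit.

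First I would recall the definition of $K=K(n,k)$ from Theorem~\ref{t21}: $V(K)=\ci(n,k)$, and $A\sim_K B$ iff $A\cap B=\emptyset$. Then I would observe that a subfamily $\cf\subseteq \ci(n,k)$ is an independent set of the graph $K$ if and only if it contains no two disjoint members, which is exactly the definition of an intersecting family. Consequently, a partition of $\ci(n,k)$ into $t$ intersecting families is the same thing as a proper $t$-coloring of $K$, and the least such $t$ equals $\chi(K)$. By Theorem~\ref{t21}, $\chi(K)=n-2k+2$ (in particular $\chi(K)\ge n-2k+2$), so no partition of $\ci(n,k)$ into fewer than $n-2k+2$ intersecting families exists. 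This completes the argument.

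There is essentially no obstacle here, since all the mathematical weight sits inside Schrijver's theorem, which we are entitled to quote; the only thing to be careful about is the edge case $n\le 2k$, where $\ci(n,k)$ is empty or has no two disjoint sets and the formula degenerates, so I would keep the standing hypothesis $n>2k$ from Theorem~\ref{t21} in force. The one genuinely substantive remark worth inserting is that it is the \emph{hard} direction of Schrijver's result (the lower bound on $\chi(K)$, ultimately resting on the Borsuk--Ulam theorem via the Lov\'asz--Schrijver argument) that is being invoked, not the easy construction of a good coloring; I would state this explicitly so the reader understands that Theorem~\ref{t211} is not a trivial combinatorial fact but a repackaging of a topological theorem. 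Thus the proof is short: unwind the definitions, cite Theorem~\ref{t21}, done.
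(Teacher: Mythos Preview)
Your proposal is correct and matches the paper exactly: the paper presents Theorem~\ref{t211} as nothing more than a reformulation of the lower-bound (hard) direction of Schrijver's Theorem~\ref{t21}, with no further proof given. Your dictionary between proper colorings of $K(n,k)$ and partitions of $\ci(n,k)$ into intersecting families is precisely the intended one-line justification.
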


We start with a simple case, in which all $V_i$'s but one are odd:
\begin{theorem}
\label{t11}
Let $m,r_1,r_2,\ldots ,r_m$ be positive integers, and put
$n=\sum_{i=1}^m (2r_i+1)-1$.
Let $G=(V,E)$ be a cycle of length $n$, and let
$V=V_1 \cup V_2 \cup \ldots \cup V_m$ be a partition of its vertex set, where
$|V_i|=2r_i+1$ for all $1 \leq i <m$ and $|V_m|=2r_m$.
Then there is an independent set $S$ of $G$ satisfying
$|S|=\sum_{i=1}^m r_i $ and $|S \cap V_i|=r_i$ for all $1 \leq i \leq m$.
\end{theorem}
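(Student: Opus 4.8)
The plan is to deduce Theorem~\ref{t11} from Schrijver's theorem (Theorem~\ref{t211}). Write $k = \sum_{i=1}^m r_i$; then the hypothesis gives $n = \sum_{i=1}^m(2r_i+1) - 1 = 2k + m - 1$, so that $n - 2k + 2 = m+1$. Note also that $C_n$ does contain independent sets of size $k$, since its independence number is $\lfloor n/2\rfloor \ge k$. I would argue by contradiction: assume that no independent set $S$ of $G$ satisfies $|S \cap V_i| = r_i$ for all $i$ (such an $S$ automatically has size $k$, as $\sum_i |S\cap V_i| = |S| = k = \sum_i r_i$), and produce a partition of the family $\ci(n,k)$ of size-$k$ independent sets of $C_n$ into only $m$ intersecting families, contradicting Theorem~\ref{t211}.

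For the partition, fix any $S \in \ci(n,k)$. Since $\sum_{i=1}^m |S \cap V_i| = |S| = k = \sum_{i=1}^m r_i$ and, by the contradiction hypothesis, we do not have $|S \cap V_i| = r_i$ for every $i$, there must be at least one index $i$ with $|S \cap V_i| \ge r_i + 1$ (otherwise $|S\cap V_i| \le r_i$ for all $i$ would force equality throughout). Let $\phi(S)$ be the least such index, and set $\cf_i = \{S \in \ci(n,k) : \phi(S) = i\}$ for $i = 1, \ldots, m$. These sets partition $\ci(n,k)$. I claim each $\cf_i$ is intersecting: if $S, S' \in \cf_i$ were disjoint, then $S \cap V_i$ and $S' \cap V_i$ would be disjoint subsets of $V_i$, each of size at least $r_i + 1$, whence $|V_i| \ge 2r_i + 2$; but $|V_i| \le 2r_i + 1$ for all $i$ (it equals $2r_i+1$ for $i < m$ and $2r_m$ for $i = m$), a contradiction. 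Thus $\ci(n,k)$ is partitioned into $m$ intersecting families (some perhaps empty, which only helps), while Theorem~\ref{t211} asserts that at least $n - 2k + 2 = m+1$ are needed. This contradiction yields the desired $S$.

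A few remarks on the edges of the argument. Schrijver's theorem requires $n > 2k$; here $n - 2k = m - 1$, so this holds precisely when $m \ge 2$. For $m = 1$ the statement is trivial: $n = 2r_1$, $V_1 = V$, and any maximum independent set of $C_{2r_1}$ has size $r_1 = |S \cap V_1|$. The only real content of the proof is the observation that the size constraints $|V_i| \le 2r_i + 1$ make each colour class of the natural ``first overfull part'' colouring intersecting, together with the arithmetic identity $n - 2k + 2 = m+1$, which leaves $m$ classes exactly one short of what Schrijver allows; so I expect no serious obstacle beyond setting up this bookkeeping and noticing that the even set $V_m$ of size $2r_m$ behaves just like the odd sets.
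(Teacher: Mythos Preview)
Your proof is correct and follows essentially the same approach as the paper: set $k=\sum r_i$, observe $n-2k+2=m+1$, and colour each $S\in\ci(n,k)$ by an index $i$ with $|S\cap V_i|\ge r_i+1$ to obtain $m$ intersecting families, contradicting Schrijver's theorem. Your version is in fact slightly more careful, explicitly treating the boundary case $m=1$ (where the hypothesis $n>2k$ of Schrijver's theorem fails) and phrasing the colour classes as a genuine partition via the least overfull index, whereas the paper simply takes $\cf_i=\{S:|S\cap V_i|\ge r_i+1\}$ and leaves the refinement to a partition implicit.
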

\vspace{0.2cm}

\noindent
{\bf Proof of Theorem \ref{t11}:}\,
Put $k=\sum_{i=1}^m r_i$
and note that $n-2k+2=m+1>m$. Assume, for contradiction,
that there is
no $S \in \ci(n,k)$ satisfying
the assertion of the theorem. Then  for every $S \in \ci(n,k)$
there is at least one index $i$ for which $|S \cap V_i| \geq r_i+1$.
Indeed, otherwise
$|S \cap V_i| \leq r_i$ for all $i$ and hence $|S \cap V_i|=r_i$ for
all $i$, contradicting the assumption.
Let $\cf_i$ be the family of sets $S \in \ci(n,k)$ for which $|S \cap V_i| \geq r_i+1$. Clearly, $\cf_i$ is intersecting (in fact, intersecting within $V_i$), contradicting
the conclusion of Theorem \ref{t211}.

 \hfill $\Box$
\vspace{0.2cm}

\begin{corollary}
\label{c12}
Let   $V=V_1 \cup V_2 \cup \cdots \cup V_m$ be a partition of
the vertex set of a cycle $C$.
\vspace{0.1cm}

\noindent
(i) For every $i$ such that  $|V_i|$ is even there exists an
independent set $S_i$ of $C$ satisfying:

\begin{enumerate}
\item $|S_i \cap V_i|=|V_i|/2$.
\item  $|S_i \cap V_j|=(|V_j|-1)/2$ for all $j$ for which $|V_j|$ is odd.
\item $|S \cap V_j| =|V_j|/2 -1$ for every $j \neq i$ for which $|V_j|$ is even.
.
\end{enumerate}
\vspace{0.1cm}

\noindent
(ii) If~ $|V_i|$ is odd for all $i\le m$ then for any vertex $v$
of $C$ there is an
independent set $S$ of $C$ not containing $v$ and satisfying
$|S \cap V_i|=(|V_i|-1)/2$ for all $i$.
\end{corollary}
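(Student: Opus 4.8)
The plan is to prove both parts by the method used for \tref{t11}: one picks nonnegative integer weights $r_j$ so that the independent set sought is precisely an $S\in\ci(n,k)$ with $|S\cap V_j|=r_j$ for every $j$, where $n=|V(C)|$ and $k=\sum_j r_j$; then, assuming no such $S$ exists, one covers $\ci(n,k)$ by a short list of intersecting families and contradicts \tref{t211}.

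For part (ii), put $|V_j|=2r_j+1$, so that $n=2k+m$ and $n-2k+2=m+2$. If the desired $S$ did not exist, $\ci(n,k)$ would be covered by the $m+1$ families $\cf_0=\{S\in\ci(n,k):v\in S\}$ and $\cf_j=\{S\in\ci(n,k):|S\cap V_j|\ge r_j+1\}$, $1\le j\le m$: an $S$ lying in none of them satisfies $v\notin S$ and $|S\cap V_j|\le r_j$ for all $j$, and then $|S|=k$ forces $|S\cap V_j|=r_j$ for all $j$, i.e.\ $S$ is the set we want. Each $\cf_j$ is intersecting since $2(r_j+1)>2r_j+1=|V_j|$, and $\cf_0$ trivially is; since $m+1<m+2=n-2k+2$, \tref{t211} is contradicted. (Here $n-2k=m\ge1$, so the hypothesis $n>2k$ holds; the cases with $k=0$ are immediate.)

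For part (i), fix $i$ with $|V_i|$ even and set $r_i=|V_i|/2$, $r_j=(|V_j|-1)/2$ for the odd parts, $r_j=|V_j|/2-1$ for the even parts $j\ne i$; with $a$ odd parts and $b$ even parts ($a+b=m$) one gets $n=2k+a+2(b-1)$, hence $n-2k+2=m+b$. The family $\cf_i=\{S:|S\cap V_i|\ge r_i+1\}$ and the $\cf_j$ with $j$ odd are intersecting exactly as before. The step I expect to be the crux is the even parts $V_j$ with $j\ne i$: there $r_j+1=|V_j|/2$, and the family $\{S:|S\cap V_j|\ge|V_j|/2\}$ need \emph{not} be intersecting, since two independent sets meeting $V_j$ in complementary halves of size $|V_j|/2$ can be disjoint. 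I would get around this by fixing a vertex $u_j\in V_j$ and replacing that family by $\cf_j^1=\{S:u_j\in S\}$ (trivially intersecting) and $\cf_j^2=\{S:|S\cap(V_j\setminus\{u_j\})|\ge|V_j|/2\}$, the latter being intersecting because any two of its members together meet the $(|V_j|-1)$-element set $V_j\setminus\{u_j\}$ in more than $|V_j|-1$ vertices. Since every $S$ with $|S\cap V_j|\ge|V_j|/2$ lies in $\cf_j^1\cup\cf_j^2$, the covering property is preserved. So if the desired $S$ did not exist, $\ci(n,k)$ would be covered by $1+a+2(b-1)=m+b-1$ intersecting families, and $m+b-1<m+b=n-2k+2$ contradicts \tref{t211}. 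The hypothesis $n>2k$ holds once $m\ge2$ (then $n-2k=m+b-2\ge1$); the single remaining case $m=1$, i.e.\ one even part $V_1=V(C)$, is settled directly by taking one colour class of the properly $2$-coloured even cycle $C$.

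The points needing care are the two arithmetic identities $n-2k+2=m+2$ and $n-2k+2=m+b$, on which the counting rests, and the remark that an $S\in\ci(n,k)$ outside every listed family must, because $|S|=k=\sum_j r_j$, realise $|S\cap V_j|=r_j$ for all $j$ — which is exactly the desired conclusion. The rest — that each listed family is intersecting, and that a cover by $t$ intersecting families gives a partition into at most $t$ — is routine.
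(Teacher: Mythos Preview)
Your proof is correct, but it takes a different route from the paper's. The paper argues by \emph{edge contraction}: for part (i), given the distinguished even class $V_i$, it picks one vertex from every other even class $V_j$ and contracts an incident edge, thereby shrinking the cycle and making each such $V_j$ odd; this reduces the situation to exactly the hypothesis of \tref{t11} (one even class, all others odd), and an independent set in the contracted cycle lifts back to one in $C$. Part (ii) is handled by contracting a single edge at the prescribed vertex $v$, again landing in \tref{t11}.

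Your argument instead stays on the original cycle and applies Schrijver's bound (\tref{t211}) directly, manufacturing the right number of intersecting families. The interesting step is your treatment of the even classes $V_j$ with $j\ne i$: since $\{S:|S\cap V_j|\ge |V_j|/2\}$ need not be intersecting, you split it as $\cf_j^1=\{S:u_j\in S\}$ and $\cf_j^2=\{S:|S\cap(V_j\setminus\{u_j\})|\ge |V_j|/2\}$, costing two families per such class and making the total exactly $m+b-1<n-2k+2=m+b$. This splitting is the combinatorial shadow of the paper's contraction (fixing $u_j$ and working in $V_j\setminus\{u_j\}$ amounts to deleting $u_j$), so the two proofs are close cousins. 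The paper's version is shorter because it reuses \tref{t11} as a black box; yours is more transparent in that it exhibits explicitly why the Schrijver bound is tight enough, and it handles part (ii) with the single extra family $\cf_0=\{S:v\in S\}$ rather than a second contraction. Your arithmetic and the boundary cases ($m=1$ in (i), $k=0$ in (ii)) are handled correctly.
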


\noindent
{\bf Proof of Corollary \ref{c12}:}\,
Part (i) in case all sets $V_j$ besides $V_i$ are of odd sizes
is exactly the assertion of Theorem \ref{t11}. If there are
additional indices $j \neq i$ for which  $|V_j|$ is even, choose
an arbitrary vertex from each of them and contract an edge incident
with it. The result follows by applying the theorem
to the shorter cycle obtained. Part (ii) is proved in the same way,
contracting an edge incident with $v$. \hfill $\Box$


\section{More applications of Schrijver's theorem and its extensions }

\subsection{Hypergraph versions}
The results above can be extended by applying known hypergraph
variants of Theorem \ref{t21}. For integers $n \geq s \geq 2$, let
$C_n^{s-1}$ denote the $(s-1)$-th power of a cycle of length $n$,
that is, the graph obtained from a cycle of length $n$ by
connecting every two vertices whose distance in the cycle is at
most $s-1$. Thus if $s=2$ this is simply the cycle of length $n$
whereas if $n \leq 2s-1$ this is a complete graph on $n$ vertices.
For integers $n,k,s$ satisfying $n>ks$, let $K(n,k,s)$ denote the
following $s$-uniform hypergraph. The vertices are all independent
sets of size $k$ in $C_n^{s-1}$, and a collection $V_1,V_2, \ldots, V_s$
of such vertices forms an edge iff the sets $V_i$ are pairwise
disjoint. Note that for $s=2$, $K(n,k,2)$ is exactly the graph
$K(n,k)$ considered in Theorem \ref{t21}.   The following
conjecture appears in \cite{ADL}.
\begin{conjecture}
\label{c31}
For $n >ks$, the chromatic number of $K(n,k,s)$ is
$\lceil \frac{n-ks+s}{s-1} \rceil$.
\end{conjecture}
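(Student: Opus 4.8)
\medskip
\noindent\textbf{Proof idea for \cjref{c31}.}
The plan splits into an (easy) upper bound and a (hard) lower bound, and the lower bound is exactly the common refinement of Schrijver's \tref{t21} (the case $s=2$, which we may take as known) and the Alon--Frankl--Lov\'asz theorem on the chromatic number of Kneser hypergraphs. Write $t=\lceil \frac{n-ks+s}{s-1}\rceil=\lceil \frac{n-s(k-1)}{s-1}\rceil$.

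For the upper bound $\chi(K(n,k,s))\le t$ no new idea is needed: colour an independent $k$-set $A$ of $C_n^{s-1}$ by $\min\bigl(\lceil \min(A)/(s-1)\rceil,\ t\bigr)$. This is the Alon--Frankl--Lov\'asz colouring of the \emph{full} Kneser hypergraph; it ignores the independence constraint, so it restricts to $K(n,k,s)$. A colour class $j<t$ contains only sets whose minimum lies in a fixed block of $s-1$ consecutive integers, so $s$ pairwise disjoint sets (with $s$ distinct minima) cannot all lie in it; and colour class $t$ is contained in the arc $\{(t-1)(s-1)+1,\dots,n\}$, which, because $t$ is a ceiling, has length at most $sk-1$ and hence contains no $s$ pairwise disjoint $k$-sets either.

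For the lower bound $\chi(K(n,k,s))\ge t$ I would use the $\mathbb{Z}_p$-Tucker lemma, first for $s=p$ prime. Assume $K(n,k,p)$ is properly coloured by $c$ with $d<t$ colours. On the sphere $\mathbb{Z}_p^{*n}$ with its free $\mathbb{Z}_p$-action (coordinates indexed by $V(C_n)$) build an equivariant labelling into $\mathbb{Z}_p\times[t-1]$ as follows: for a nonzero point $x$ with colour parts $x^{\sigma}=\{v: x_v=\sigma\}$, if some $x^{\sigma}$ contains a $k$-set independent in $C_n^{p-1}$, output $(\sigma^{\ast},c(A^{\ast}))$, where $A^{\ast}\subseteq x^{\sigma^{\ast}}$ is a canonically chosen such set and $\sigma^{\ast}$ is forced by equivariance; if no $x^{\sigma}$ contains such a set, switch to a second block of $t-1-d$ labels governed by the positions of the support of $x$. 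The $\mathbb{Z}_p$-Tucker lemma, applied with these parameters, then produces a ``rainbow'' chain, which decodes into $p$ pairwise disjoint independent $k$-sets all of colour $c(A^{\ast})$, contradicting that colour classes contain no $p$ pairwise disjoint members. General $s$ would then follow from the prime case by the Alon--Frankl--Lov\'asz reduction (a sparse colouring of $K(n,k,s)$ yielding one of $K(n,k,p)$ for an appropriate prime $p\le s$), provided that reduction can be made to respect the stability constraint.

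The difficulties all sit in the lower bound. First, the degenerate case needs a tight combinatorial lemma: if $A\subseteq\mathbb{Z}_n$ contains no $k$-set independent in $C_n^{p-1}$ then $C_n^{p-1}[A]$ has independence number at most $k-1$, and this structural bound must be converted into \emph{exactly} $t-1-d$ extra labels, since any slack breaks the dimension count. Second --- and this is the heart of the matter, already the subtle point in Schrijver's theorem and now compounded by the $\mathbb{Z}_p$-action --- the canonical set $A^{\ast}$ and the part $\sigma^{\ast}$ must be chosen so that the labelling is simultaneously $\mathbb{Z}_p$-equivariant, monotone in the sense required by the $\mathbb{Z}_p$-Tucker lemma, and never creates a spurious rainbow simplex out of non-disjoint sets; Schrijver's device of locating the first position at which $A$ deviates from the evenly spaced pattern is the template, but making it work with gaps $\ge p$ and with $p$-fold labels is delicate. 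Third, one must verify that the prime-to-general reduction survives the passage to stable sets. I expect the second of these to be where the real work --- and the main risk of the approach --- lies.
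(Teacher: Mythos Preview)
The statement you are attempting to prove is labelled a \emph{Conjecture} in the paper, not a theorem; the paper offers no proof and merely records that the case when $s$ is a power of $2$ is established in \cite{ADL}. So there is no ``paper's own proof'' to compare your proposal against.

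On the substance of your sketch: the upper bound is correct and standard --- the Alon--Frankl--Lov\'asz colouring of the full Kneser $s$-hypergraph restricts to the stable subhypergraph, and your arc-length count is right. The lower bound, however, is a research outline rather than a proof, and you say as much. Two remarks sharpen where the gap lies. First, if your $\mathbb{Z}_p$-Tucker argument went through for an odd prime $p$, it would already extend what is known, since only $s=2^r$ is currently settled; the difficulties you flag in making the canonical choice of $(\sigma^{\ast},A^{\ast})$ simultaneously equivariant and monotone are therefore not cosmetic --- they are the content of an open problem. Second, the Alon--Frankl--Lov\'asz reduction from composite $s$ to prime factors does not obviously transport the stability constraint; the reason \cite{ADL} obtains exactly powers of $2$ is that the argument builds \emph{up} from Schrijver's $s=2$ base case rather than reducing \emph{down} to an arbitrary prime. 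Your parenthetical ``provided that reduction can be made to respect the stability constraint'' is thus concealing a genuine obstacle, not a routine verification.
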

This is proved in \cite{ADL} if $s$ is any power of $2$.
Using this fact we can prove the following.
\begin{theorem}
\label{t32}
Let $s \geq 2$ be a power of $2$, let
$m$ and $r_1,r_2, \ldots ,r_m$ be integers, and
put $n=s\sum_{i=1}^m r_i +(s-1)(m-1)$. Let
$V_1,V_2, \ldots , V_m$ be a partition of the vertex set of
$C_n^{s-1}$, where
$|V_i|=sr_i+s-1$ for all $1 \leq i <m$, and $|V_m|=sr_m$.
Then there exists an independent set $S$ in $C_n^{s-1}$ satisfying
$|S \cap V_i|=r_i$ for all $1 \leq i \leq m$.
\end{theorem}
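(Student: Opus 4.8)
The plan is to mimic the proof of Theorem \ref{t11}, replacing the use of Theorem \ref{t211} by the corresponding statement for the hypergraph $K(n,k,s)$ that follows from Conjecture \ref{c31} in the case when $s$ is a power of $2$. First I would set $k=\sum_{i=1}^m r_i$ and compute, using $n=s\sum_{i=1}^m r_i+(s-1)(m-1)=sk+(s-1)(m-1)$, the value of the relevant chromatic bound: $\frac{n-ks+s}{s-1}=\frac{sk+(s-1)(m-1)-sk+s}{s-1}=(m-1)+\frac{s}{s-1}$, whose ceiling is $m$ (since $1<\frac{s}{s-1}\le 2$). So by the $s$-a-power-of-$2$ case of Conjecture \ref{c31}, proved in \cite{ADL}, the chromatic number of $K(n,k,s)$ is exactly $m$; in particular it is \emph{more than} $m-1$, which is the inequality I actually want to exploit.

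Next I would argue by contradiction: suppose no independent set $S$ of size $k$ in $C_n^{s-1}$ satisfies $|S\cap V_i|=r_i$ for all $i$. An independent set $S$ of size $k$ in $C_n^{s-1}$ must, since $\sum_i r_i=k=|S|$, either hit every $V_i$ in exactly $r_i$ vertices, or over-represent at least one class. Hence for every $S\in V(K(n,k,s))$ there is an index $i=i(S)$ with $|S\cap V_i|\ge r_i+1$. Define $\cf_i$ to be the set of those independent $k$-sets $S$ with $|S\cap V_i|\ge r_i+1$. These $m$ families cover all the vertices of $K(n,k,s)$. I claim each $\cf_i$ is an independent set in the hypergraph $K(n,k,s)$: if $S_1,\dots,S_s\in\cf_i$ were pairwise disjoint, then $\sum_{j=1}^s|S_j\cap V_i|\ge s(r_i+1)=sr_i+s$, but the $S_j\cap V_i$ are pairwise disjoint subsets of $V_i$, so $|V_i|\ge sr_i+s$, contradicting $|V_i|=sr_i+s-1$ (for $i<m$) or $|V_i|=sr_m<sr_m+s$ (for $i=m$). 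Thus the vertex set of $K(n,k,s)$ is partitioned (after discarding empty classes and breaking ties arbitrarily) into at most $m$ independent sets, i.e. $K(n,k,s)$ is $m'$-colorable for some $m'\le m$. But we need a \emph{strict} contradiction with $\chi=m$; here the clean way is to note that in fact the coloring uses at most $m-1$ colors in a disguised sense — more carefully, since for a proper coloring we need $\chi(K(n,k,s))\le m$ only, the contradiction must come from elsewhere. I would fix this by re-deriving, as in Theorem \ref{t11}, that the count gives $n-ks$ forcing room: the cleanest route is to argue that one of the classes is actually empty or to use that $\chi(K(n,k,s))\ge m$ means $m-1$ intersecting (independent) families cannot cover, so I should instead show the $\cf_i$ number only $m-1$ essentially — this is the point to get right.

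The main obstacle, then, is exactly the bookkeeping that turns "$\le m$ independent families cover the vertices" into a contradiction with "$\chi=m$", which at face value is not a contradiction. The resolution, paralleling Theorem \ref{t11} where $n-2k+2=m+1>m$ was used, is that here one should compare with $\chi(K(n,k,s))\ge\lceil\frac{n-ks+s}{s-1}\rceil=m$ but engineered so that the $m$ families $\cf_i$ cannot all be nonempty unless the assertion fails — alternatively one takes $n$ one larger, proving the theorem with $|V_m|=sr_m$ (which is $s-1$ smaller than the "expected" $sr_m+s-1$), so that a genuine $(m-1)$st-family slack appears: indeed, since $|V_m|=sr_m$ is too small to contain $s$ disjoint sets of size $r_m+1$ \emph{and also} too small for the over-representation to be forced the same way, one checks that $\cf_m=\emptyset$ is impossible but $\cf_1,\dots,\cf_{m-1}$ already cover, giving an $(m-1)$-coloring of $K(n,k,s)$ and contradicting $\chi\ge m$. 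Finishing the proof is then just verifying this last covering claim: every independent $k$-set over-represents some $V_i$ with $i<m$, because if it over-represented only $V_m$ then removing an element of $S\cap V_m$ and the deficiencies forces a contradiction with $|V_m|=sr_m$ and $\sum r_i=k$. Once that combinatorial claim is pinned down, the theorem follows immediately from the $s=2^t$ case of Conjecture \ref{c31}.
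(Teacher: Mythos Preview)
Your overall strategy is exactly the paper's, but you made an arithmetic slip that sent you on an unnecessary detour. You correctly computed
\[
\frac{n-ks+s}{s-1}=(m-1)+\frac{s}{s-1},
\]
and correctly observed that $1<\frac{s}{s-1}\le 2$. But then $(m-1)+\frac{s}{s-1}\in(m,m+1]$, so its ceiling is $m+1$, not $m$. Thus $\chi(K(n,k,s))=m+1>m$.

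With this correction the ``main obstacle'' you describe evaporates. Your families $\cf_i$ ($1\le i\le m$) cover all vertices of $K(n,k,s)$, and your argument that no $\cf_i$ contains a hyperedge (i.e., $s$ pairwise disjoint members) is correct: such a hyperedge would force $|V_i|\ge s(r_i+1)=sr_i+s$, contradicting $|V_i|\le sr_i+s-1$. Assigning to each $S$ the smallest $i$ with $S\in\cf_i$ therefore gives a proper $m$-coloring of $K(n,k,s)$, contradicting $\chi(K(n,k,s))=m+1$. This is precisely the paper's proof; everything from ``The main obstacle, then, is \ldots'' onward should be deleted.
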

{\bf Proof:}\,
Put $k=\sum_{i=1}^m r_i$
and note that the chromatic number of
$K(n,k,s)$ is $\lceil (n-ks+s)/(s-1) \rceil>m$. Assume, for contradiction,
that there is a partition of the vertex set of
$C_n^{s-1}$ with parts $V_i$
as in the theorem, with
no independent set of $C_n^{s-1}$ of size $k=\sum_{i=1}^m r_i$ satisfying
the assertion of the theorem. In this case, for any such independent
set $S$
there is at least one index $i$ so
that $|S \cap V_i| \geq r_i+1$.
We can thus define a
coloring $f$ of the independent sets of size $k$ of $C_n^{s-1}$ by letting
$f(S)$ be the smallest $i$ such that $|S \cap V_i| \geq r_i+1$.
Since the chromatic number of $K(n,k,s)$ exceeds $m$, there are $s$
pairwise
disjoint sets $S_1,S_2, \ldots ,S_s$
and an index $i$ such that $|S_j \cap V_i|
\geq r_i+1$ for all $1 \leq j \leq s$. But this implies that
$|V_i| \geq sr_i+s$, contradicting the assumption on the size of the
set $V_i$, and completing the proof. \hfill $\Box$
\vspace{0.2cm}

\noindent
Just as in the previous section, this implies the following.
\begin{corollary}
\label{c33}
Let $s>1$ be a power of $2$.
Let $V_1, V_2, \ldots ,V_m$ be a partition of the vertex set of
$C_n^{s-1}$, where $n=\sum_{i=1}^m |V_i|$. Then there is an
independent set  $S$ in $C_n^{s-1}$ satisfying
$$
|S \cap V_i| = \floor*{ \frac{|V_i|-s+1}{s} }
$$
for all $1 \leq i <m$, and
$$
|S \cap V_m| = \floor*{ \frac{|V_i|}{s} }.
$$
\end{corollary}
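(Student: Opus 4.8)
The plan is to reduce Corollary~\ref{c33} to the exact-size statement of Theorem~\ref{t32} by repeated edge contraction, exactly as Corollary~\ref{c12} was reduced to Theorem~\ref{t11}. Given an arbitrary partition $V_1,\dots,V_m$ of $V(C_n^{s-1})$, put $r_i=\floor*{\frac{|V_i|-s+1}{s}}$ for $i<m$ and $r_m=\floor*{\frac{|V_m|}{s}}$. Then $sr_i+s-1\le|V_i|$ for $i<m$ and $sr_m\le|V_m|$, and the total ``surplus'' $\bigl(\sum_{i<m}(|V_i|-sr_i-s+1)\bigr)+(|V_m|-sr_m)$ equals $n-\bigl(s\sum_{i}r_i+(s-1)(m-1)\bigr)\ge 0$. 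I would induct on this surplus. When it is $0$ the part sizes are precisely those required by Theorem~\ref{t32}, which hands us an independent set $S$ in $C_n^{s-1}$ with $|S\cap V_i|=r_i$ for every $i$, and then $r_i=\floor*{\frac{|V_i|-s+1}{s}}$ ($i<m$) and $r_m=\floor*{\frac{|V_m|}{s}}$ are exactly the quantities in the statement.

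For the inductive step, choose an index whose part $V_i$ is strictly larger than its target and a vertex $v\in V_i$; delete $v$ from the underlying cycle and join its two former neighbours, obtaining a shorter cycle, on which we keep the partition $V_1,\dots,V_{i-1},V_i\setminus\{v\},V_{i+1},\dots,V_m$. A short check with the floor function shows that removing one vertex from a part that has positive surplus leaves the vector $(r_1,\dots,r_m)$ unchanged while lowering the total surplus by $1$ (for $i<m$: from $sr_i<|V_i|-s+1$ one gets $sr_i\le|V_i|-s<|V_i|-s+1$, so $\floor*{\frac{(|V_i|-1)-s+1}{s}}=r_i$; the case $i=m$ is identical). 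Hence induction supplies an independent set $\hat S$ in the $(s-1)$-st power of the shorter cycle with $|\hat S\cap V_j|=r_j$ for all $j$, and I then lift $\hat S$ back to $C_n^{s-1}$.

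The crucial point of the lift is that for vertices $x,y$ other than $v$ one has $d_{C_{n-1}}(x,y)\le d_{C_n}(x,y)$, where $C_{n-1}$ denotes the contracted cycle: a shortest $x$--$y$ walk in $C_n$ is either already a walk in $C_{n-1}$, or it passes through $v$ and can be shortened by replacing the two edges at $v$ by the new edge. Consequently every edge of $C_n^{s-1}$ avoiding $v$ is also an edge of $C_{n-1}^{s-1}$, so the independent set $\hat S$ of $C_{n-1}^{s-1}$ remains independent in $C_n^{s-1}$; and since $v\notin\hat S$ we still have $|\hat S\cap V_i|=r_i$ with $V_i$ the original part, closing the induction. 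The one step I expect to require genuine care rather than bookkeeping is this distance-monotonicity claim underlying the lift; the floor manipulations and the surplus count are routine. (One should also keep in mind the tacit hypotheses $|V_i|\ge s-1$ for $i<m$ and $|V_m|\ge s$, without which the asserted equalities cannot hold with $r_i\ge 0$; these are automatic in the cycle case $s=2$ treated in the previous section.)
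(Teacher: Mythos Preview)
Your proposal is correct and follows the same route as the paper: reduce to Theorem~\ref{t32} by contracting edges so that each $V_i$ shrinks to the target size $sr_i+s-1$ (respectively $sr_m$), then apply the theorem to the contracted graph. The paper states this in two lines without justification; your induction on the surplus and your distance-monotonicity argument for the lift simply fill in the details the paper leaves implicit, and your remark about the tacit lower bounds on the $|V_i|$ is a fair observation that the paper also passes over.
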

The proof is by contracting edges, reducing each set $V_i$ to one
of size $s \floor*{ \frac{|V_i|-s+1}{s} } +s-1$ for $1 \leq i <m$,
and reducing $V_m$ to a set of size $s \floor*{ \frac{|V_m|}{s}
}$.
The result follows by applying Theorem \ref{t32} to this contracted
graph.
\vspace{0.2cm}

\subsection{The Du-Hsu-Wang conjecture}
Du, Hsu and Wang \cite{DHH} conjectured that
if a graph on $3n$ vertices is the
edge disjoint union of a Hamilton cycle of length $3n$ and $n$ vertex
disjoint
triangles  then its independence number is $n$.  Erd\H{o}s
conjectured that in fact any such graph is
$3$-colorable. Using an algebraic approach introduced in \cite{AT}, Fleischner and Stiebitz \cite{FS}
proved this conjecture in a stronger form - any such graph is in fact
$3$-choosable.

The original conjecture, in a slightly stronger form, can be derived from Theorem \ref{t11}:
omit any  vertex and apply the theorem with $r_i=1$ for all $i$. So, for every vertex $v$ there exists a representing set as desired in the conjecture  omitting $v$.
The derivation of the statement of Theorem
\ref{t11} from the result of Schrijver in \cite{sc} actually
supplies a quick proof of the following:

\begin{theorem}
Let $C_{3n}=(V,E)$ be  cycle of length $3n$ and let
$V= A_1 \cup A_2 \cup  \ldots  \cup A_n$ be a partition of its
vertex set into $n$ pairwise disjoint sets, each of size $3$.
Then there exist two disjoint independent
sets in the cycle, each containing one point from each $A_i$.
\end{theorem}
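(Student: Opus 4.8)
The plan is to mimic the proof of Theorem \ref{t11}, applying Theorem \ref{t211} directly to the family $\ci(3n,n)$ of independent sets of size $n$ in $C_{3n}$. By that theorem, $\ci(3n,n)$ cannot be partitioned into fewer than $3n-2n+2=n+2$ intersecting families; equivalently, it cannot be covered by $n+1$ intersecting families. Call an independent set $S$ of size $n$ in $C_{3n}$ a \emph{transversal} if $|S\cap A_i|=1$ for every $i$. The goal is to produce two disjoint transversals.

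First I would record the relevant decomposition of $\ci(3n,n)$. For any $S\in\ci(3n,n)$ we have $\sum_{i=1}^n |S\cap A_i| = |S| = n$, so either every summand equals $1$, in which case $S$ is a transversal, or $|S\cap A_i|\ge 2$ for some $i$. Put $\cf_i=\{S\in\ci(3n,n):|S\cap A_i|\ge 2\}$ and let $\ct$ denote the family of all transversals; then $\ci(3n,n)=\ct\cup\cf_1\cup\cdots\cup\cf_n$. Each $\cf_i$ is intersecting: if $|S\cap A_i|\ge 2$ and $|S'\cap A_i|\ge 2$ then, since $|A_i|=3$, we get $|S\cap S'\cap A_i|\ge |S\cap A_i|+|S'\cap A_i|-|A_i|\ge 1$, so $S\cap S'\ne\emptyset$.

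Next I would argue by contradiction. Suppose there are no two disjoint transversals. Then any two transversals meet, so the whole family $\ct$ is itself intersecting, and therefore $\ci(3n,n)$ is covered by the $n+1$ intersecting families $\ct,\cf_1,\ldots,\cf_n$ — contradicting Theorem \ref{t211}. Hence two disjoint transversals $T_1,T_2$ exist, and each is an independent set of the cycle containing exactly one vertex of every $A_i$, which is what we want. (The hypotheses of Theorem \ref{t211} hold since $3n>2n$, and $\ci(3n,n)$ is nonempty because $n\le\lfloor 3n/2\rfloor$.)

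The argument is short, so there is no serious computational obstacle; the one point that carries the proof is the observation that the hypothesis ``no two disjoint transversals'' collapses the entire set of transversals into a \emph{single} intersecting family, which is exactly what lets the number of intersecting families needed to cover $\ci(3n,n)$ drop to $n+1$, one below the Schrijver bound of $n+2$. Everything else — the decomposition of $\ci(3n,n)$ and the fact that each $\cf_i$ is intersecting — is routine and uses only $|A_i|=3$.
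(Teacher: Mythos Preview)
Your proof is correct and is essentially the same as the paper's: both decompose $\ci(3n,n)$ into the transversals $\ct$ together with the intersecting families $\cf_i=\{S:|S\cap A_i|\ge2\}$ and invoke Schrijver's bound $n+2$. The only cosmetic difference is that the paper phrases this via a proper $(n+1)$-coloring of the Schrijver graph $K(3n,n)$ (Theorem~\ref{t21}) and finds two disjoint sets in a common color class, whereas you use the equivalent intersecting-family formulation (Theorem~\ref{t211}) and argue by contradiction.
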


\begin{proof}

Define a coloring of the independent sets of size $n$ in
$C_{3n}$ as follows. If $S$ is such an independent set and
there is an index $i$ so that $|S \cap A_i| \geq 2$, color $S$ by
the smallest such $i$. Otherwise, color $S$ by the color $n+1$. By
\cite{sc} there are two disjoint independent sets $S_1,S_2$ with
the same color. This color cannot be any $i \leq n$, since if this
is the case then
$$
|(S_1 \cup S_2) \cap A_i| =|S_1 \cap A_i|+|S_2 \cap A_i| \geq
2+2=4>3=|A_i|,
$$
which is impossible. Thus $S_1$ and $S_2$ are both colored $n+1$,
meaning that each of them contains exactly one element of each
$A_i$. \end{proof}

The Fleischner-Stiebitz theorem implies that the representing set in the DHW conjecture can be required to contain any given vertex.
 This
 can also be deduced  from the topological version of Hall's
Theorem first proved in \cite{ah} (for this derivation see e.g \cite{ahhs}). The latter shows also that the cycle of length $3n$ can be replaced by a union of cycles, totalling $3n$ vertices,  none being of length $1 \bmod 3$.
Simple examples show that the Fleischner-Stiebitz theorem on $3$-colorability does not apply to this setting.

Note that none of the above proofs supplies an
efficient algorithm for finding the desired  independent set.

\section{Fair representation by matchings in $K_{n,n}$, the case of two parts}

The case $m=2$ of Conjecture \ref{equirep00} is easy. Here is its statement in this case:

\begin{theorem}\label{theorem:m=2}
If $F$ is a subset of $E(K_{n,n})$,  then
there exists a perfect matching $N$ such that $|N \cap F|\ge \floor*{ \frac{|F|}{n}}-1$ and $|N \setminus F|\ge \floor*{ \frac{|E(G)\setminus F|}{n}}-1$.
\end{theorem}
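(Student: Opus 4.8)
The plan is to deduce the theorem from a structural fact about the set of achievable intersection sizes $\mathcal R:=\{\,|N\cap F|: N\text{ a perfect matching of }\kn\,\}$: namely, that $\mathcal R$ has no gap of length larger than $2$. Put $a=\floor*{|F|/n}$ and $b=\floor*{|\ekn\setminus F|/n}$; since $|F|+|\ekn\setminus F|=n^2$, these floors satisfy $a+b\le n$. The easy step is to place $a$ inside the range of $\mathcal R$. Fix the standard decomposition of $\kn$ (with sides $\{u_1,\dots,u_n\}$ and $\{v_1,\dots,v_n\}$) into the $n$ perfect matchings $M_k=\{u_iv_j: j\equiv i+k\pmod n\}$, $0\le k<n$. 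Since $\sum_k|M_k\cap F|=|F|$, some $M_k$ has $|M_k\cap F|\le|F|/n$ and hence $\le a$, while some $M_k$ has $|M_k\cap F|\ge\lceil|F|/n\rceil\ge a$; thus $\min\mathcal R\le a\le\max\mathcal R$.

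The heart of the matter is the gap bound, which comes from one tame local move. Given a perfect matching $N$ and two of its edges $u_pv_q$ and $u_{p'}v_{q'}$ (so $p\ne p'$ and $q\ne q'$), the \emph{double swap} replacing them by $u_pv_{q'}$ and $u_{p'}v_q$ produces another perfect matching $N'$ of $\kn$ (this uses only that $\kn$ is complete bipartite), and plainly $\bigl||N'\cap F|-|N\cap F|\bigr|\le 2$, since $N$ and $N'$ differ by exchanging two edges for two others. Identifying a perfect matching with the permutation $\sigma\in S_n$ given by $u_iv_{\sigma(i)}\in N$, a double swap amounts to multiplying $\sigma$ by a transposition; as transpositions generate $S_n$, any two perfect matchings $N,\widehat N$ are joined by a chain $N=P_0,P_1,\dots,P_\ell=\widehat N$ in which each $P_{t+1}$ is a double swap of $P_t$. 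Along this chain $|P_t\cap F|$ changes by at most $2$ per step, so whenever $x<y$ both belong to $\mathcal R$ the set $\mathcal R$ meets $\{x+1,x+2\}$; equivalently, consecutive elements of $\mathcal R$ differ by at most $2$.

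The theorem is then bookkeeping. Since $\min\mathcal R\le a\le\max\mathcal R$, either $a\in\mathcal R$, or $a$ lies strictly between two consecutive elements of $\mathcal R$, which by the gap bound must be $a-1$ and $a+1$; in either case $\mathcal R$ contains $a$ or $a-1$. Choose a perfect matching $N$ with $|N\cap F|\in\{a-1,a\}$. Then $|N\cap F|\ge a-1=\floor*{|F|/n}-1$, while $|N\cap(\ekn\setminus F)|=n-|N\cap F|$ equals $n-a\ge b$ (so $\ge b-1$) when $|N\cap F|=a$, and equals $n-a+1\ge b-1$ when $|N\cap F|=a-1$, both using $a+b\le n$. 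This is exactly the asserted conclusion, and the degenerate cases ($a=0$, or $n=1$) are covered by the same inequalities. In fact the same analysis recovers Conjecture \ref{equirep00} for $m=2$: in the first case both parts are represented fully fairly, and in the second case ($a\notin\mathcal R$, whence $a+1\in\mathcal R$ too) one takes $|N\cap F|$ to be $a-1$ or $a+1$ according to which of the two parts one wants to keep exactly fairly represented.

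The hard part will be the gap bound — or, more honestly, the decision to aim for it rather than to try to build the matching directly. The set $\mathcal R$ is genuinely not always an interval (for $\kn$ with $n=2$ and $F$ a perfect matching, $\mathcal R=\{0,2\}$), so hitting $a$ exactly is hopeless in general; but the double-swap move changes $|N\cap F|$ by at most $2$, which already rules out a gap of length $3$, and that is precisely what the $-1$'s in the statement demand. A more combinatorial alternative would be to take a matching $A\subseteq F$ of size $a$ and a matching $B\subseteq\ekn\setminus F$ of size $b$ (these exist, by a double-counting argument together with K\H{o}nig's theorem) and try to choose them vertex-disjoint, so that $A\cup B$ extends to a perfect matching; controlling the overlap of $A$ and $B$ looks more delicate than the swap argument, so I would take the route above.
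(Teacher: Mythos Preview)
Your proof is correct and follows essentially the same approach as the paper: decompose $\ekn$ into $n$ perfect matchings to sandwich $a=\lfloor|F|/n\rfloor$ between the minimum and maximum of $\mathcal R$, then use that transpositions generate $S_n$ to walk between matchings with $|N\cap F|$ changing by at most $2$ per step, and apply a discrete intermediate-value argument. The paper's own proof is a two-sentence sketch of exactly this idea; you have simply made the bookkeeping (the gap bound on $\mathcal R$, the inequality $a+b\le n$, and the verification of both lower bounds) explicit.
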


Partitioning $E(K_{n,n})$ into $n$ perfect matchings shows that there exist two perfect matchings, $N_1$ and $N_2$, such that $|N_1 \cap F| \le \frac{|F|}{n} \le |N_2 \cap F|$. The fact that any permutation can be reached from any other by a sequence of transpositions means that it is possible to reach $N_2$ from $N_1$ by a sequence of exchanges,  replacing at each step two edges of the perfect matching by two other edges.
Thus, by a mean value argument, at some matching in the process the condition is satisfied.

   The question remains of determining the cases in which the  $(-1)$ term is necessary. That this term is sometimes necessary is shown, for example, by the case of $n=2$ and $F$ being a perfect matching. Another example - $n=6$ and
$F=\left ( [3]\times [3]\right ) \cup \left (\{4,5,6\} \times \{4,5,6\}\right )$: it is easy to see that there is no perfect matching containing precisely  $3$ edges from $F$, as required in Conjecture \ref{equirep00}.

The appropriate condition  is given by the following concept:

\begin{definition}\label{def:rigidity}
A subset $F$ of $\ekn$ is said to be {\em rigid} if there exist subsets $K$ and $L$ of $[n]$ such that
$F=K \times L \cup ([n]\setminus K)\times ([n]\setminus L)$.
\end{definition}

The  rigidity in question is with respect to $F$-parity  of perfect matchings:

\begin{theorem}\cite{amw}
A subset $F$ of $\ekn$ is rigid if and only if $|P \cap F|$ has the same   parity for all perfect matchings $P$ in $\kn$.
\end{theorem}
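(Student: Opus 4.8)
The plan is to translate the parity of $|P\cap F|$ into a condition on the $2\times 2$ submatrices of the incidence matrix of $F$. Identify both sides of $\kn$ with $[n]$, so that a perfect matching $P$ is recorded by a permutation $\sigma\in S_n$ via $P=\{(i,\sigma(i)):i\in[n]\}$, and let $A=(a_{ij})$ be the $0$-$1$ matrix with $a_{ij}=1$ exactly when $(i,j)\in F$. Then $|P\cap F|=\sum_{i\in[n]}a_{i\sigma(i)}$, and the whole statement becomes one about this sum modulo $2$. Note that no topological input is needed here.

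The ``only if'' direction is a direct count. Suppose $F=K\times L\cup([n]\setminus K)\times([n]\setminus L)$, and fix a permutation $\sigma$. Put $a=|\{i\in K:\sigma(i)\in L\}|$. Since $\sigma$ is a bijection, exactly $|L|$ indices are mapped into $L$, so $|L|-a$ of them lie outside $K$; hence $|\{i\notin K:\sigma(i)\notin L\}|=(n-|K|)-(|L|-a)$. Adding the two contributions, $|P\cap F|=a+\big((n-|K|)-(|L|-a)\big)=n-|K|-|L|+2a$, whose parity is $n-|K|-|L|\pmod 2$, independent of $\sigma$.

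For the ``if'' direction, the first step is to localise the hypothesis: constant parity forces
$$a_{ik}+a_{il}+a_{jk}+a_{jl}\equiv 0\pmod 2\qquad\text{for all }i\neq j,\ k\neq l.$$
Given such $i,j,k,l$, choose a perfect matching $P$ of $\kn$ containing the edges $(i,k)$ and $(j,l)$; this is possible because deleting rows $i,j$ and columns $k,l$ leaves a copy of $K_{n-2,n-2}$, which has a perfect matching. Let $P'$ be obtained from $P$ by deleting $(i,k),(j,l)$ and inserting $(i,l),(j,k)$; this is again a perfect matching, and $|P'\cap F|-|P\cap F|=a_{il}+a_{jk}-a_{ik}-a_{jl}$. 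By hypothesis this difference is even, which is exactly the displayed congruence. The second step reads off the structure of $A$. Taking $j=1$, the congruence says $a_{ik}+a_{1k}\equiv a_{il}+a_{1l}\pmod 2$, so for each $i$ the value $c_i:=a_{ik}+a_{1k}\bmod 2$ is independent of $k$; thus row $i$ of $A$ equals row $1$ when $c_i=0$ and equals its bitwise complement when $c_i=1$. Setting $L=\{k:a_{1k}=1\}$ and $K=\{i:c_i=0\}$ (so $1\in K$), one gets $(i,j)\in F$ iff $i\in K$ and $j\in L$, or $i\notin K$ and $j\notin L$; that is, $F=K\times L\cup([n]\setminus K)\times([n]\setminus L)$, so $F$ is rigid.

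The only point needing genuine care is the first step of the ``if'' direction: arguing that every ``swap'' of the form $\{(i,k),(j,l)\}\leftrightarrow\{(i,l),(j,k)\}$ is realised inside some pair of perfect matchings, which is what allows one to pass from the global parity hypothesis to the entry-wise condition; this rests on nothing more than the existence of a perfect matching in $K_{n-2,n-2}$. The case $n\le 1$ is trivial (the congruence is vacuous and one takes $K=[n]$, $L=\{k:a_{1k}=1\}$), and everything past the localisation step is linear algebra over $\mathbb{F}_2$.
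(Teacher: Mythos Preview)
The paper does not include a proof of this statement; it is quoted from \cite{amw} and used as a tool. So there is no ``paper's own proof'' to compare against.

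Your argument is correct. The ``only if'' count is clean, and the ``if'' direction is the standard one: any transposition-swap between two perfect matchings changes $|P\cap F|$ by $a_{il}+a_{jk}-a_{ik}-a_{jl}$, so constant parity forces every $2\times 2$ minor of $A$ to have even entry-sum, and then rows of $A$ are pairwise equal or complementary, yielding the block form $K\times L\cup([n]\setminus K)\times([n]\setminus L)$. The only minor bookkeeping point is that in Step~2 you fix $j=1$, so the congruence applies only for $i\neq 1$; you implicitly set $c_1=0$ (hence $1\in K$), which is consistent since $a_{1k}+a_{1k}\equiv 0$. The $n\le 1$ case is indeed degenerate, and your treatment of it is fine.
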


This characterization  shows that when $F$ is rigid, it is not always
possible to drop the ``minus $1$'' term in Theorem \ref{theorem:m=2}. Conversely, if $F$ is not rigid, then the ``minus $1$'' term can indeed be dropped, as indicated by Corollary~\ref{theorem:m=2nonrigidcase} below.

We shall show:

\begin{theorem}\label{theorem:d}
Let $a<c<b$ be three integers and suppose that $F \subseteq E(K_{n,n})$ is not rigid.
If there exists a perfect matching $P_a$ such that $|P_a\cap F|=a$ and a perfect matching $P_b$ such that $|P_b\cap F|=b$,   then there exists a perfect matching $P_c$ satisfying $|P_c\cap F|=c$.
\end{theorem}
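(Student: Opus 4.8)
The plan is to prove the intermediate value statement by showing that as one moves along a suitable path of perfect matchings connecting $P_a$ to $P_b$, the quantity $|P \cap F|$ changes by at most one at each step, provided the path is chosen carefully — and that non-rigidity is exactly what allows such a careful choice. Recall that any two perfect matchings of $K_{n,n}$ (viewed as permutations) are connected by a sequence of transpositions, i.e. by a sequence of ``swaps'' in which two edges $\{(i,\sigma(i)),(j,\sigma(j))\}$ are replaced by $\{(i,\sigma(j)),(j,\sigma(i))\}$. If we think of each swap as moving through the $2\times 2$ sub-board on rows $\{i,j\}$ and columns $\{\sigma(i),\sigma(j)\}$, then $|P\cap F|$ changes by $\#(F\text{-edges on the new diagonal})-\#(F\text{-edges on the old diagonal})$, a number in $\{-2,-1,0,1,2\}$. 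A jump of $\pm 2$ happens precisely when one diagonal of the $2\times 2$ board lies entirely in $F$ and the other lies entirely outside $F$. So the whole difficulty is concentrated in avoiding, or circumventing, such ``rigid $2\times 2$ windows.''

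First I would record the elementary observation that a single transposition changes $|P\cap F|$ by $\pm 2$ only across a $2\times 2$ sub-board both of whose diagonals are ``$F$-monochromatic'' (one inside $F$, one outside); call such a $2\times 2$ board \emph{bad}. If along some swap we would jump by $2$ (say from $a'$ past $c$ to $a'+2$), the idea is to \emph{detour}: instead of performing that single transposition, realize the same net permutation change, or an alternative one still making progress toward $P_b$, by a short sequence of transpositions that uses a third row or column and whose intermediate values include $c$. Concretely, if swapping on rows $\{i,j\}$, columns $\{k,\ell\}$ would jump by $2$, pick a third row $i'$ (present since $n\ge 3$; the cases $n\le 2$ are handled directly) and route the exchange through $i'$ as a $3$-cycle, which decomposes into two transpositions; one checks that at least one such routing hits an intermediate matching with $F$-intersection equal to the value we skipped — unless \emph{every} auxiliary row and column also presents a bad window, and this is where rigidity enters.

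The key step — and the main obstacle — is the structural lemma: if $F$ is not rigid, then one cannot get ``trapped,'' i.e. there is always some legal detour that lands on the value $c$. I would prove the contrapositive: if, for some matching $P$ with $a<|P\cap F|<b$ not equal to... more precisely, if from the neighborhood of every matching attaining value $c\pm 1$ every elementary move that would reach value $c$ is blocked by a bad $2\times 2$ window, then the pattern of $F$ forced by all these monochromatic diagonals propagates across the whole board and forces $F = K\times L \cup ([n]\setminus K)\times([n]\setminus L)$ for suitable $K,L$, i.e. $F$ is rigid. This propagation argument is essentially a connectivity/2-coloring argument on the rows and columns: bad $2\times 2$ windows force an ``XOR'' relation between the membership patterns of pairs of rows, and if enough such relations hold the row set splits into two classes (giving $K$) and likewise the columns (giving $L$). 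Here I expect to lean on the characterization from \cite{amw}: $F$ rigid $\iff$ $|P\cap F|$ has constant parity over all perfect matchings $P$. Since $a$ and $b$ have — well, they need not have the same parity, but in any case non-rigidity gives us a matching $P_0$ and a transposition changing $|P_0\cap F|$ by $\pm 1$, and more usefully it guarantees that the ``$\pm 1$ moves'' are plentiful enough to reach every value strictly between the minimum and maximum of $|P\cap F|$.

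Putting it together: let $\alpha=\min_P |P\cap F|$ and $\omega=\max_P|P\cap F|$, so $\alpha\le a<c<b\le\omega$. Using the detour lemma repeatedly, I would show the set $\{|P\cap F| : P \text{ a perfect matching}\}$ equals the full integer interval $[\alpha,\omega]$ when $F$ is not rigid (and equals every \emph{other} integer in $[\alpha,\omega]$, i.e. an arithmetic progression with common difference $2$, when $F$ is rigid — consistent with \cite{amw}). In particular $c$ is attained, which is the assertion. The only genuinely delicate point is verifying that the detour always exists; everything else is bookkeeping on $2\times 2$ sub-boards and a finite check for small $n$.
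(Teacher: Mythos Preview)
Your plan matches the paper's approach: assume for contradiction that no perfect matching attains the value $c$, locate a single transposition jumping from a $(c-1)$-matching $T'$ to a $(c+1)$-matching $T$, and then use $3$-cycle (and occasional $4$-cycle) detours through auxiliary rows and columns to force the XOR relations on rows that collapse the $0$--$1$ matrix to the rigid block form $K\times L\cup([n]\setminus K)\times([n]\setminus L)$. The paper executes precisely the step you flag as ``genuinely delicate'' via six explicit claims---symmetry of the two diagonal blocks, antisymmetry of the off-diagonal entries, and an addition-table description of each block in terms of the other two---working entirely from the one fixed pair $(T,T')$ rather than from ``every'' neighborhood, and without invoking the parity characterization of \cite{amw}.
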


It follows from Theorem~\ref{theorem:d} that if a subset $F$ of $E(K_{n,n})$ is not rigid
then for every integer $c$ such that $n - \nu(E(K_{n,n}) \setminus F) \le c \le \nu(F)$
there exists a perfect matching $N$ satisfying  $|N\cap F|=c$. This implies,

\begin{corollary}\label{theorem:m=2nonrigidcase}
If  a subset $F$  of $E(K_{n,n})$
is not rigid, or if $ n \nmid  |F|$, then there exists a perfect matching $N$ such that $|N \cap F|\ge \floor*{ \frac{|F|}{n}}$ and $|N \setminus F|\ge \floor*{ \frac{|E(K_{n,n})\setminus F|}{n}}$.
\end{corollary}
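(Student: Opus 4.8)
The plan is to reduce the two inequalities to a single two‑sided bound on $|N\cap F|$ and then treat the non‑rigid and the rigid cases separately. Writing $e=|E(K_{n,n})\setminus F|=n^2-|F|$, a one‑line computation gives $\floor*{e/n}=n-\ceil*{|F|/n}$, so, since $|N\setminus F|=n-|N\cap F|$ for every perfect matching $N$, the two requirements in the corollary hold simultaneously for $N$ if and only if $\floor*{|F|/n}\le |N\cap F|\le\ceil*{|F|/n}$. Thus it suffices to produce a perfect matching whose intersection with $F$ has a size in this range.

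Next I would record an averaging bound. Partition $E(K_{n,n})$ into $n$ perfect matchings $M_1,\dots,M_n$ (equivalently, fix a Latin square of order $n$). From $\sum_i|M_i\cap F|=|F|$ there is an index with $|M_i\cap F|\ge\ceil*{|F|/n}$ and an index with $|M_j\cap F|\le\floor*{|F|/n}$; hence $\nu(F)\ge\ceil*{|F|/n}$ and $\nu(E(K_{n,n})\setminus F)\ge n-\floor*{|F|/n}$, i.e. $n-\nu(E(K_{n,n})\setminus F)\le\floor*{|F|/n}$. In particular the single value $c_0=\floor*{|F|/n}$ always lies in $[\,n-\nu(E(K_{n,n})\setminus F),\ \nu(F)\,]$.

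For $F$ not rigid (irrespective of whether $n\mid|F|$), Theorem~\ref{theorem:d}, via the interpolation statement following it, says that every integer $c$ in $[\,n-\nu(E(K_{n,n})\setminus F),\ \nu(F)\,]$ is realized as $|N\cap F|$ for some perfect matching $N$; apply this with $c=c_0$ to get $\floor*{|F|/n}=|N\cap F|\le\ceil*{|F|/n}$, as required. When $F$ is rigid the remaining hypothesis is $n\nmid|F|$; write $F=K\times L\cup([n]\setminus K)\times([n]\setminus L)$ with $|K|=k$, $|L|=\ell$, and parametrize a perfect matching $N$ by a permutation $\sigma$. With $t=|\{i\in K:\sigma(i)\in L\}|$ one gets $|N\cap F|=t+(n-k-\ell+t)=2t+(n-k-\ell)$ (the second term counts $i\notin K$ with $\sigma(i)\notin L$, using that $\sigma$ is a bijection), and $t$ attains exactly all integers in $[\max(0,k+\ell-n),\ \min(k,\ell)]$. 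Hence $|N\cap F|$ runs over precisely the integers of the fixed parity $n-k-\ell\bmod 2$ lying in $[\,|n-k-\ell|,\ n-|k-\ell|\,]$, and the same formula at the extremal values of $t$ shows these endpoints are $n-\nu(E(K_{n,n})\setminus F)$ and $\nu(F)$ respectively. Since $n\nmid|F|$, the integers $\floor*{|F|/n}$ and $\ceil*{|F|/n}=\floor*{|F|/n}+1$ have opposite parities, so one of them, say $v$, has parity $n-k-\ell$; by the averaging bound $n-\nu(E(K_{n,n})\setminus F)\le\floor*{|F|/n}\le v\le\ceil*{|F|/n}\le\nu(F)$, so $v$ is attained by some perfect matching $N$, and again $\floor*{|F|/n}\le|N\cap F|=v\le\ceil*{|F|/n}$.

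The genuinely hard input, the interpolation Theorem~\ref{theorem:d}, is already available, so the only delicate point left is the rigid case: one must notice that rigidity forces $|N\cap F|$ to have a fixed parity (consistent with the characterization in \cite{amw}) and then check that the parity‑restricted set of attainable values still meets $\{\floor*{|F|/n},\ \ceil*{|F|/n}\}$ — which works exactly because the averaging argument pins the endpoints of that attainable range to $n-\nu(E(K_{n,n})\setminus F)$ and $\nu(F)$.
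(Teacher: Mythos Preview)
Your proof is correct. The reduction to the two-sided bound $\floor*{|F|/n}\le |N\cap F|\le\ceil*{|F|/n}$ and the treatment of the non-rigid case via the interpolation following Theorem~\ref{theorem:d} are exactly what the paper does (the paper's justification of the corollary is just the sentence preceding it).

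Where you go beyond the paper is in the rigid case with $n\nmid|F|$: the paper gives no explicit argument there, whereas you compute the set of attainable values $|N\cap F|=2t+(n-k-\ell)$ directly from the structure $F=K\times L\cup([n]\setminus K)\times([n]\setminus L)$ and use the parity observation to land in $\{\floor*{|F|/n},\ceil*{|F|/n}\}$. This is a clean and self-contained argument. An alternative route, closer in spirit to the paper's proof of Theorem~\ref{theorem:m=2}, is simply to note that when $n\nmid|F|$ the target interval $\{\floor*{|F|/n},\ceil*{|F|/n}\}$ consists of two consecutive integers, and the transposition path from a matching with $|N_1\cap F|\le\floor*{|F|/n}$ to one with $|N_2\cap F|\ge\ceil*{|F|/n}$ changes $|N\cap F|$ by at most $2$ at each step, so it cannot skip both values; this avoids any structural analysis of rigid $F$. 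Your approach has the advantage of exhibiting the full spectrum of attainable values in the rigid case, which also explains precisely why the corollary fails when $F$ is rigid and $n\mid|F|$ with $|F|/n$ of the wrong parity.
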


\begin{proof}[Proof of Theorem~\ref{theorem:d}]
We  use  the matrix language of the original Ryser conjecture (Section~\ref{sec:known}).
Let $M$ be the $n \times n$ matrix in which $m_{i,j}=1$ if $(i,j)\in F$ and $m_{i,j}=0$ if $(i,j)\not \in F$. A perfect matching in $G$ corresponds to a \emph{generalized diagonal} (abbreviated \emph{GD}) in $M$, namely
a set of $n$ entries belonging to distinct rows and columns. A GD will be called a $k$-\emph{GD} if exactly $k$ of its entries are 1.
 By assumption there exist an $a$-GD $T^a$ and a $b$-GD $T^b$. Assume, for contradiction, that there is no $c$-GD.
The case $n=2$ is trivial, and hence, reversing the roles of $0$s and $1$s if necessary, we may assume that $c>1$.
Since a GD corresponds to a permutation in $S_n$, and since every permutation can be obtained from any other permutation by a sequence of transpositions,
there exists a sequence of GD's $T^a=T_1, T_2, \ldots, T_k=T^b$, where each pair $T_i$ and $T_{i+1}$, $i=1,\ldots,k-1$, differ in two entries.
By the contradictory assumption there exists $i$ such that $T:=T_{i+1}$ is a $(c+1)$-GD and $T':=T_i$  is a $(c-1)$-GD. Without loss of generality
we may assume that $T$ lies along the main diagonal, its first $c+1$ entries are 1, and the rest of its entries are 0.

Let $I=[c+1],~J=[n]\setminus I$ and let $A=M[I \mid I], ~B=M[I \mid J], ~C=M[J \mid I], ~D=M[J \mid J]$ (we are using here a common notation - $M[I \mid J]$ denotes the submatrix of $M$ induced by the row set $I$ and column set $J$).
 We may  assume that the GD $T'$ is obtained from $T$ by replacing
the entries $(c,c)$ and $(c+1,c+1)$ by $(c+1,c)$ and $(c,c+1)$ (Figure~\ref{fig1}).

\begin{figure}[h!]
\begin{center}
\includegraphics[scale=0.2]{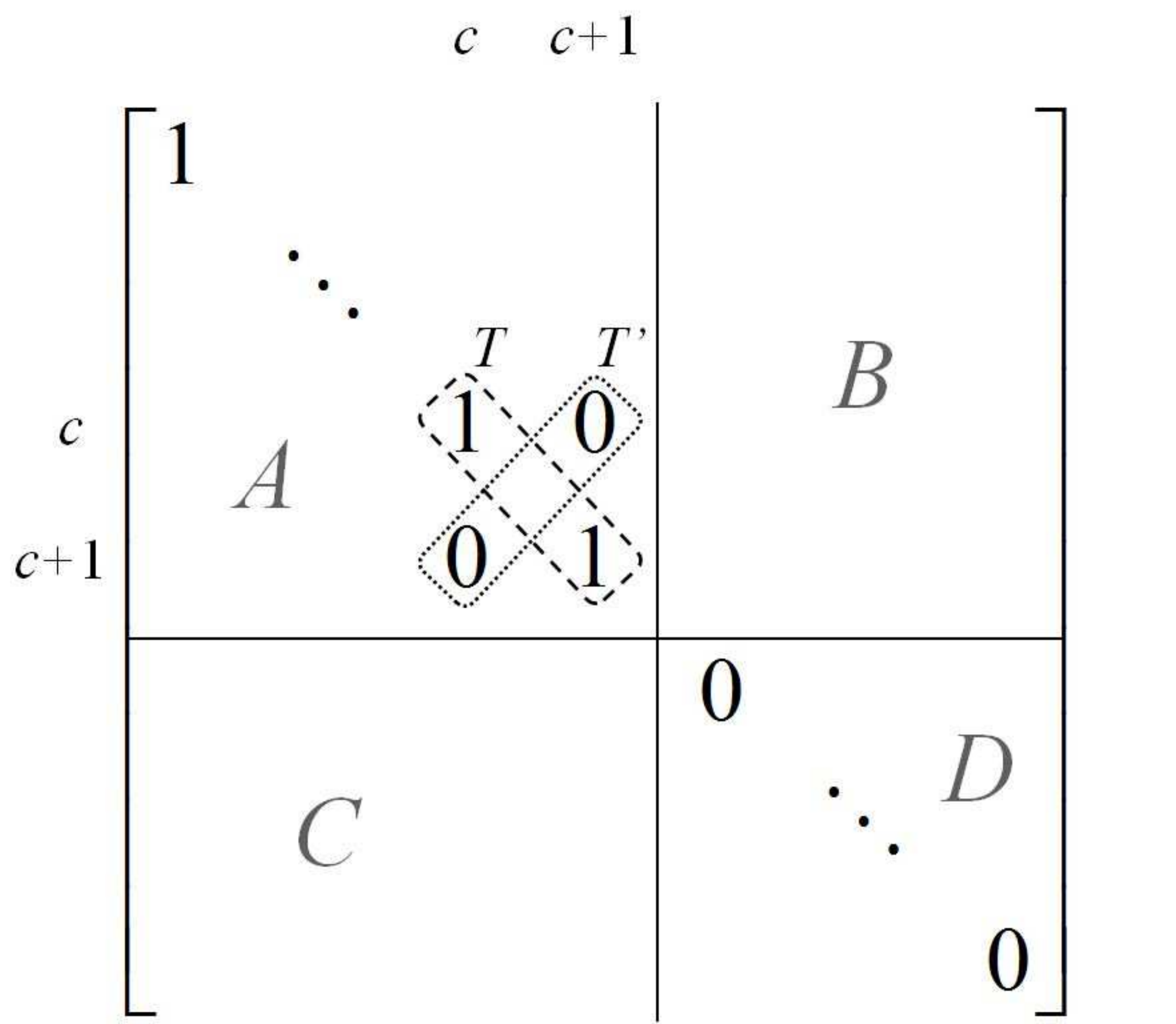}
\end{center}
\caption{}
\label{fig1}
\end{figure}

\begin{claim}\label{claim:1}
The matrices $A$ and $D$ are symmetric.
\end{claim}
\begin{proof}[Proof of Claim~\ref{claim:1}]
\renewcommand{\qedsymbol}{}
To prove that $A$ is symmetric, assume, for contradiction, that there exist $i_1 \neq i_2 \in I$ such that
 $m_{i_1,i_2}\neq m_{i_2,i_1}$. Then, we can replace the entries $(i_1,i_1)$ and $(i_2,i_2)$ in $T$ by $(i_1,i_2)$ and $(i_2,i_1)$ to obtain a $c$-GD. The proof for $D$ is similar,  applying  the replacement in this case to $T'$.
\end{proof} 

\begin{claim}\label{claim:2}
If $i \in I$ and $j \in J$ then
 $m_{i,j}\ne m_{j,i}$.
\end{claim}

\begin{proof}[Proof of Claim~\ref{claim:2}]
\renewcommand{\qedsymbol}{}

{\bf Case $I$:} $m_{i,j}=m_{j,i}=0$. Replacing $(i,i)$ and $(j,j)$ in $T$ by $(i,j)$ and $(j,i)$ results in a $c$-GD.

{\bf Case $II$:}
 $m_{i,j}=m_{j,i}=1$.

 {\bf Subcase $II_1$:} $i \not \in \{c,c+1\}$. Replacing in $T'$ the entries $(i,i)$ and $(j,j)$ by $(i,j)$ and $(j,i)$ results in a $c$-GD.

  {\bf Subcase $II_2$:} $i  \in \{c,c+1\}$.
 Without loss of generality we may assume $i=c+1$ and $j=c+2$ (Figure~\ref{fig2}). If $m_{k,\ell}=m_{\ell,k}=0$ for some $1\le k<\ell\le c$  then  replacing in $T$ the entries $(k,k),(\ell,\ell),(c+1,c+1)$ and $(c+2,c+2)$  by $(k,\ell),(\ell,k),(c+1,c+2)$ and $(c+2,c+1)$ results in a $c$-GD  (Figure~\ref{fig2}). Thus, we may assume that $m_{k,\ell}=m_{\ell,k}=1$  for all $k,\ell \le c$.

\begin{figure}[h!]
\begin{center}
\includegraphics[scale=0.25]{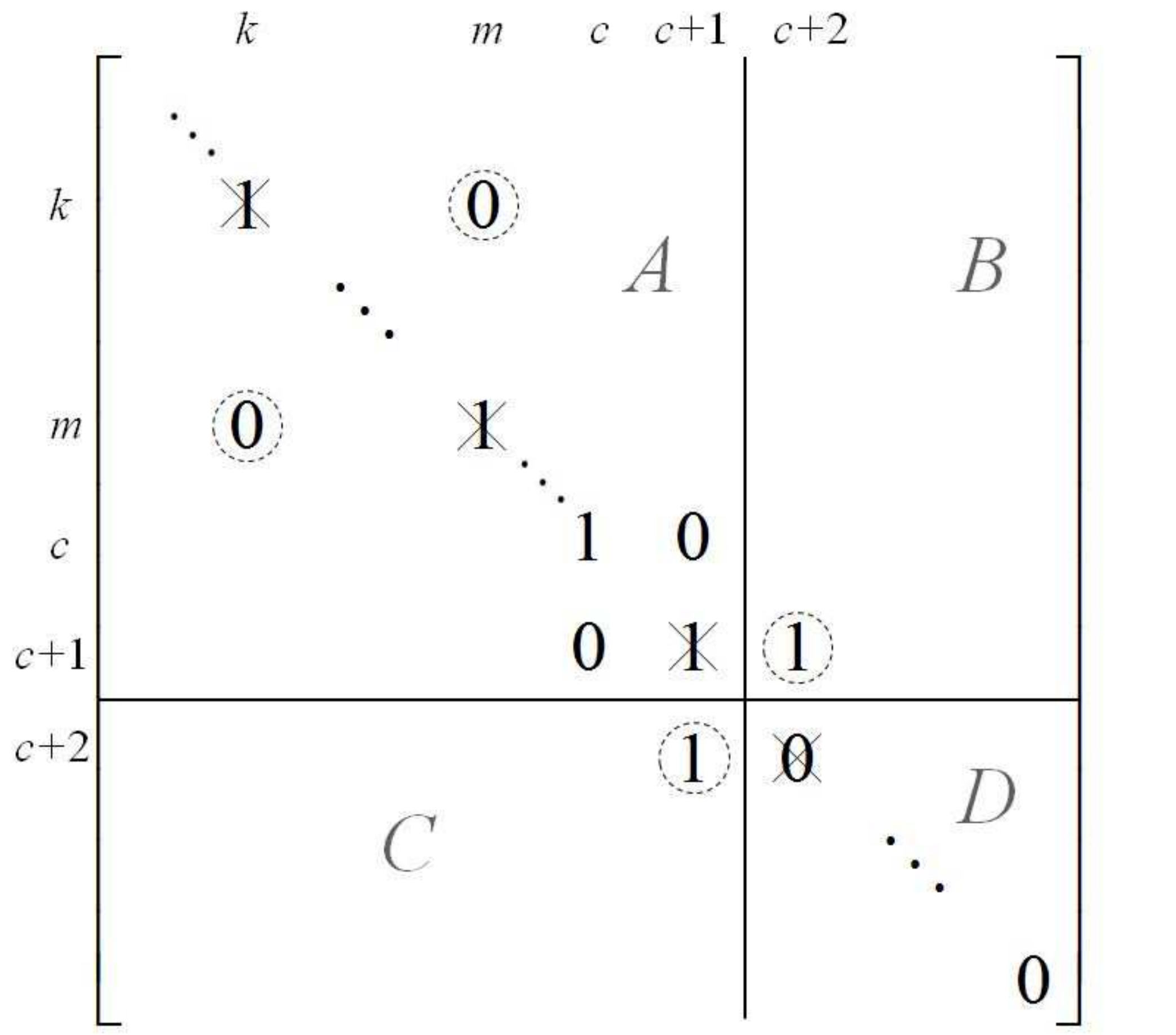}
\end{center}
\caption{Subcase $II_2$. Removed entries are struck out by $\times$ and added entries are circled.}
\label{fig2}
\end{figure}

We now consider three sub-subcases:

(i) $m_{c,c+2}=0, m_{c+2,c}=1$. In this case we may replace the entries $(c,c),(c+1,c+1)$ and $(c+2,c+2)$ in $T$ by $(c,c+2),(c+1,c)$ and $(c+2,c+1)$ and obtain a $c$-GD (Figure~\ref{fig3a}).

(ii) $m_{c,c+2}=1, m_{c+2,c}=0$. Replace the same entries as in Case (i) by $(c,c+1),(c+1,c+2)$ and $(c+2,c)$, again obtaining a $c$-GD (Figure~\ref{fig3b}).

(iii) $m_{c,c+2}=m_{c+2,c}=1$. If $m_{c-1,c+1}=0$ then, remembering that $m(c-1,c-1)=1$, we can replace $(c-1,c-1),(c,c),(c+1,c+1)$ and $(c+2,c+2)$ in $T$ by $(c-1,c+1),(c,c)+2),(c+1,c-1),(c+2,c)$ and obtain a $c$-GD (Figure~\ref{fig4a}). If $m_{c-1,c+1}=1$, we can replace $(c-1,c-1),(c,c)$ and $(c+1,c+1)$ in $T$ by $(c-1,c+1),(c,c-1)$ and $(c+1,c)$ and obtain a $c$-GD (Figure~\ref{fig4b}.). This proves Claim~\ref{claim:2}.

\begin{figure}[h!]
  \centering
  \subfigure[]{\label{fig3a}\includegraphics[scale=0.3]{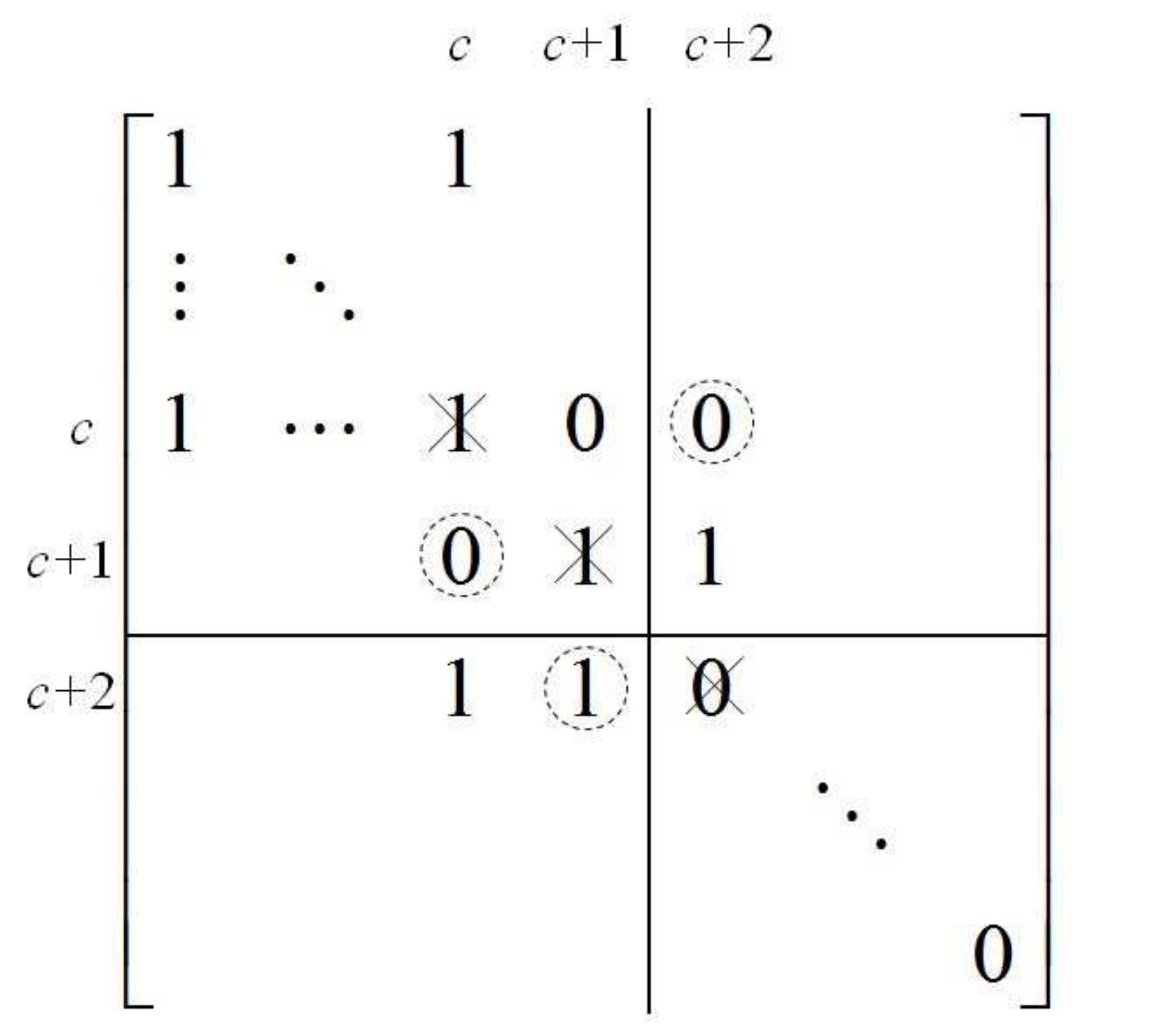}}
  \subfigure[]{\label{fig3b}\includegraphics[scale=0.3]{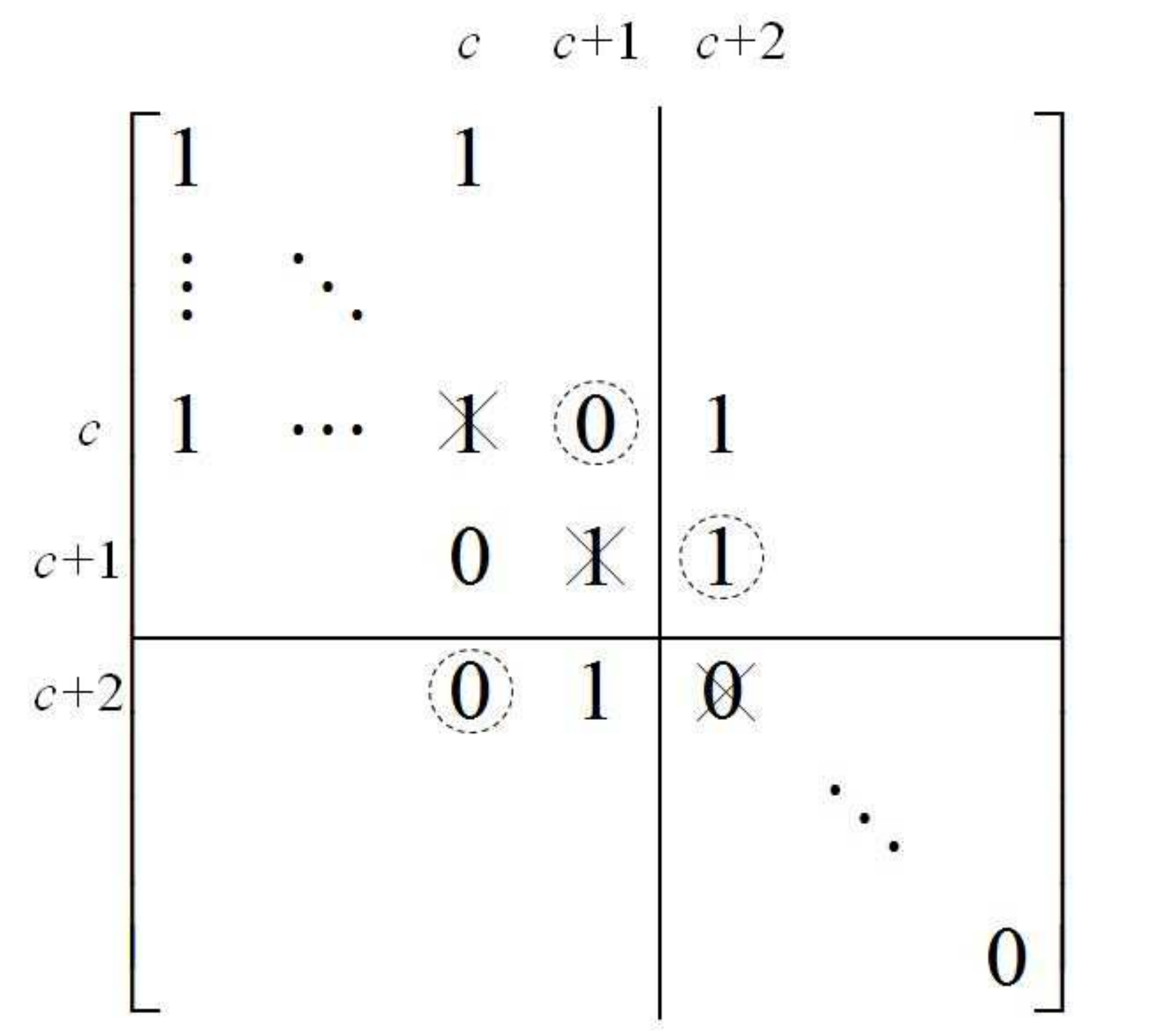}}
  \caption{}
  \label{fig3}
\end{figure}

\begin{figure}[h!]
  \centering
  \subfigure[]{\label{fig4a}\includegraphics[scale=0.27]{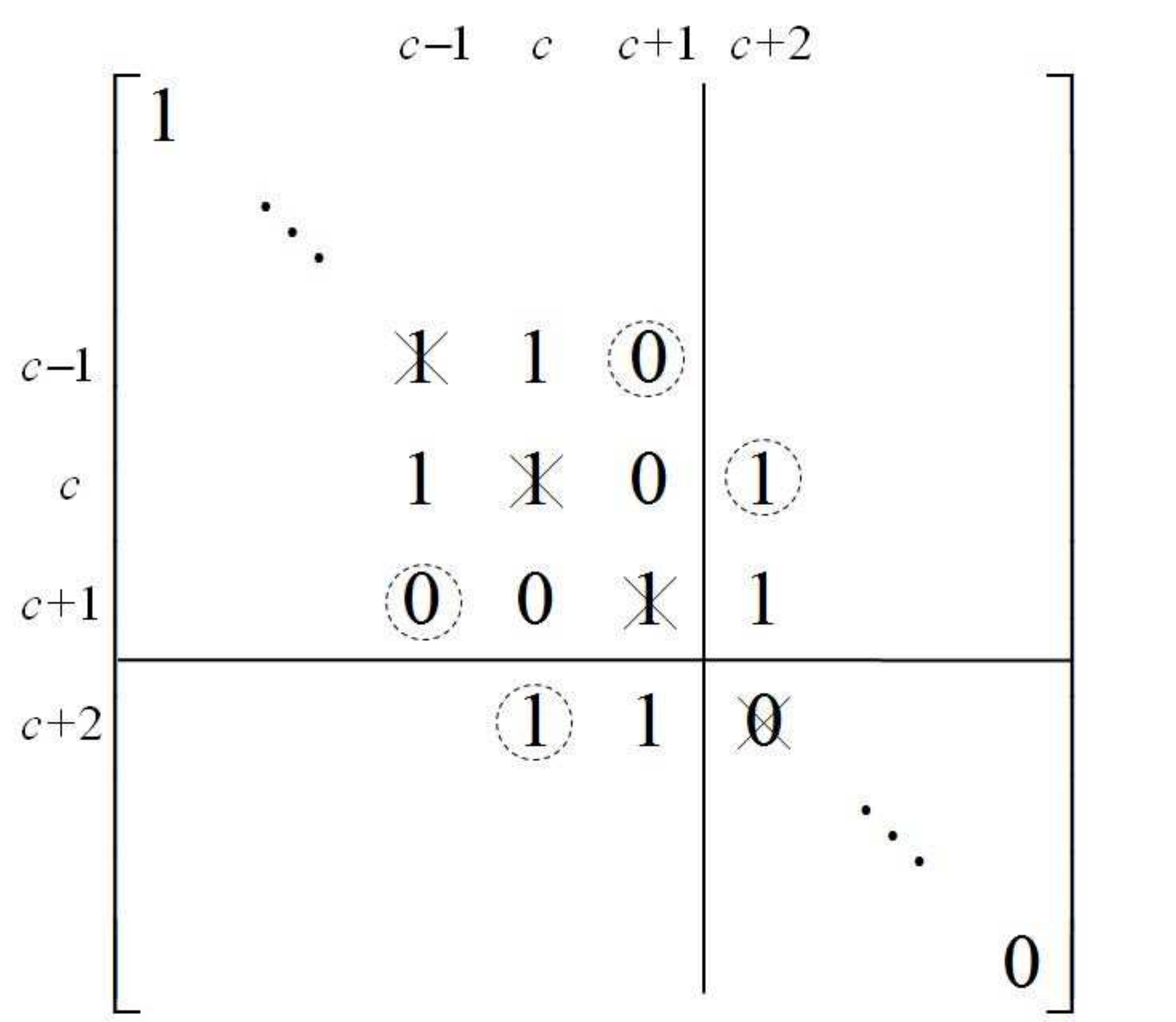}}
  \subfigure[]{\label{fig4b}\includegraphics[scale=0.27]{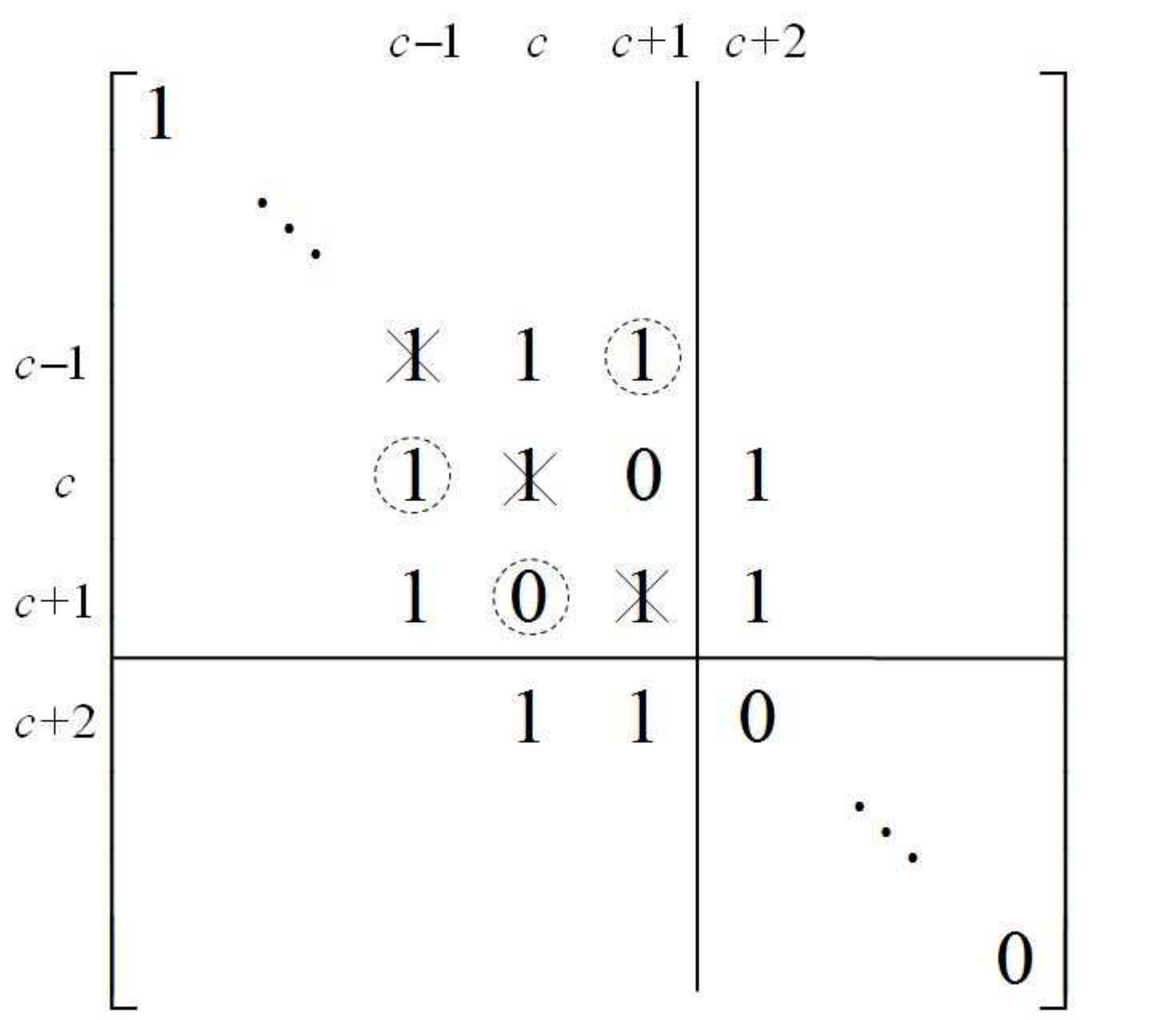}}
  \caption{}
  \label{fig4}
\end{figure}

\end{proof} 

For a matrix $K$ indexed by any set of indices $X$ and indices $i,j\in X$, denote by $K_{(i)}$ the row of $K$ indexed by $i$, and by $K^{(j)}$ the column of $K$ indexed by $j$.

\begin{claim}\label{claim:3}
For any  $j \in J$, the submatrix $A$ is the addition table
modulo 2 of the row $C_{(j)}$ and the column $B^{(j)}$
(See illustration in Figure~\ref{fig5}).
\end{claim}

\begin{figure}[h!]
\begin{center}
\includegraphics[scale=0.25]{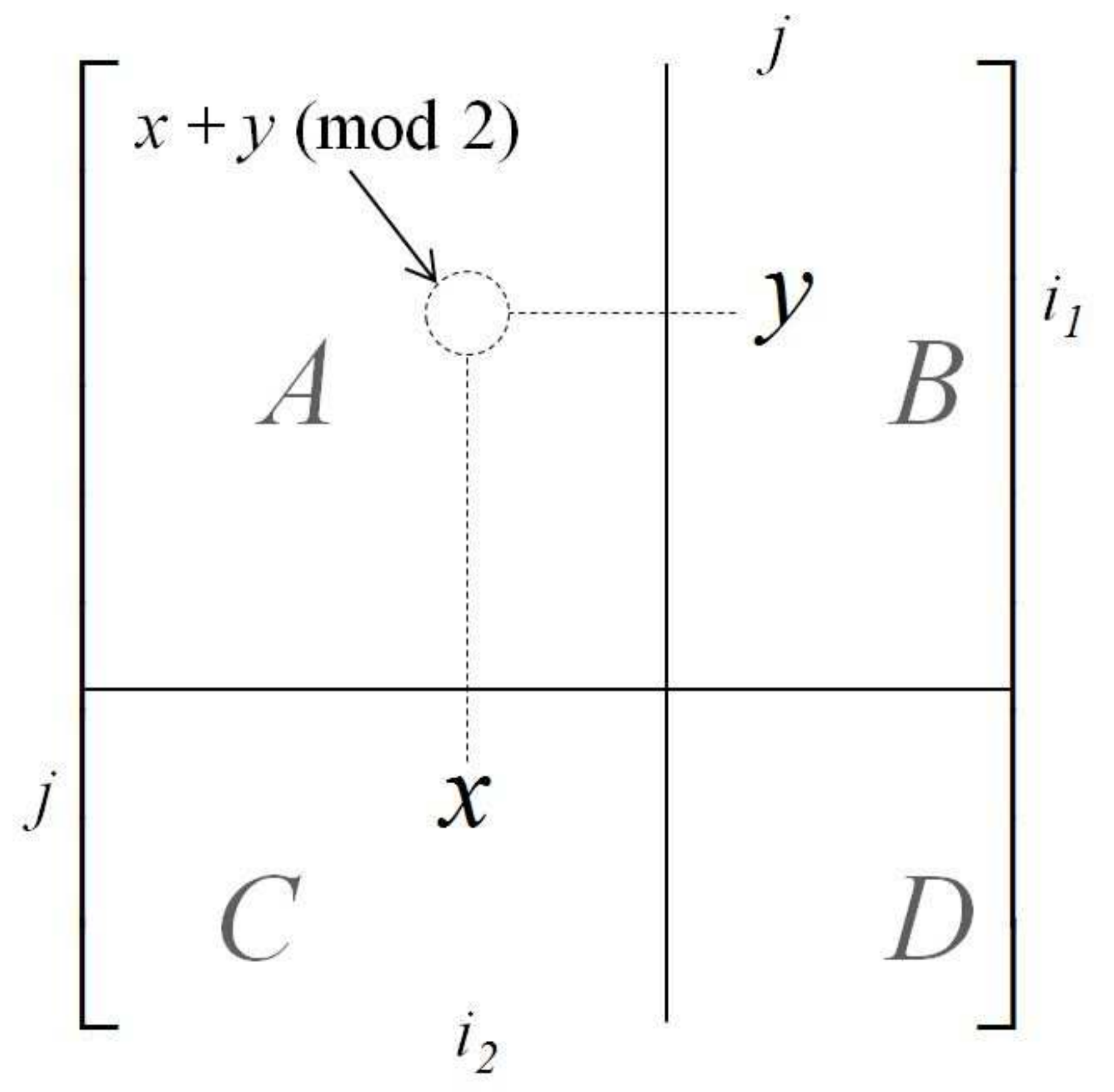}
\end{center}
\caption{}
\label{fig5}
\end{figure}

\begin{proof}[Proof of Claim~\ref{claim:3}]
\renewcommand{\qedsymbol}{}
We need to show that for any $i_1,i_2 \in I$ and $j \in J$ we have $m_{i_1,i_2}=m_{j, i_2}+m_{i_1,j} \pmod 2$. We may assume that $i_1\ne i_2$ since the case $i_1= i_2$ follows from Claim~\ref{claim:2} and the fact that $A$ has 1's in the main diagonal. Let $x=m_{j,i_2}\in C_{(j)}$ and $y=m_{i_1,j}\in B^{(j)}$. We consider three cases: (i) $x\ne y$, (ii) $x=y=0$, and (iii) $x=y=1$.

(i) Assume, for contradiction, that $m_{i_1,i_2}=0$. Then, by Claim 1, $m_{i_2,i_1}=0$ and we can replace $(i_1,i_1),(i_2,i_2)$ and $(j,j)$ in $T$ by $(i_2,i_1),(i_1,j)$ and $(j,i_2)$ and obtain a $c$-GD (Figure~\ref{fig6a}).
(ii) Assume, for contradiction, that $m_{i_1,i_2}=1$. We perform the same exchange as in Case (i) and, again, obtain a $c$-GD (Figure~\ref{fig6b}). (iii) By Claim 2, we have $m_{i_2,j}=m_{j,i_1}=0$. Assume, for contradiction, that $m_{i_1,i_2}=1$. We replace $(i_1,i_1),(i_2,i_2)$ and $(j,j)$ in $T$ by $(i_1,i_2),(i_2,j)$ and $(j,i_1)$ and obtain a $c$-GD (Figure~\ref{fig6c}). This proves Claim~\ref{claim:3}.

\begin{figure}[h!]
  \centering
  \subfigure[]{\label{fig6a}\includegraphics[scale=0.22]{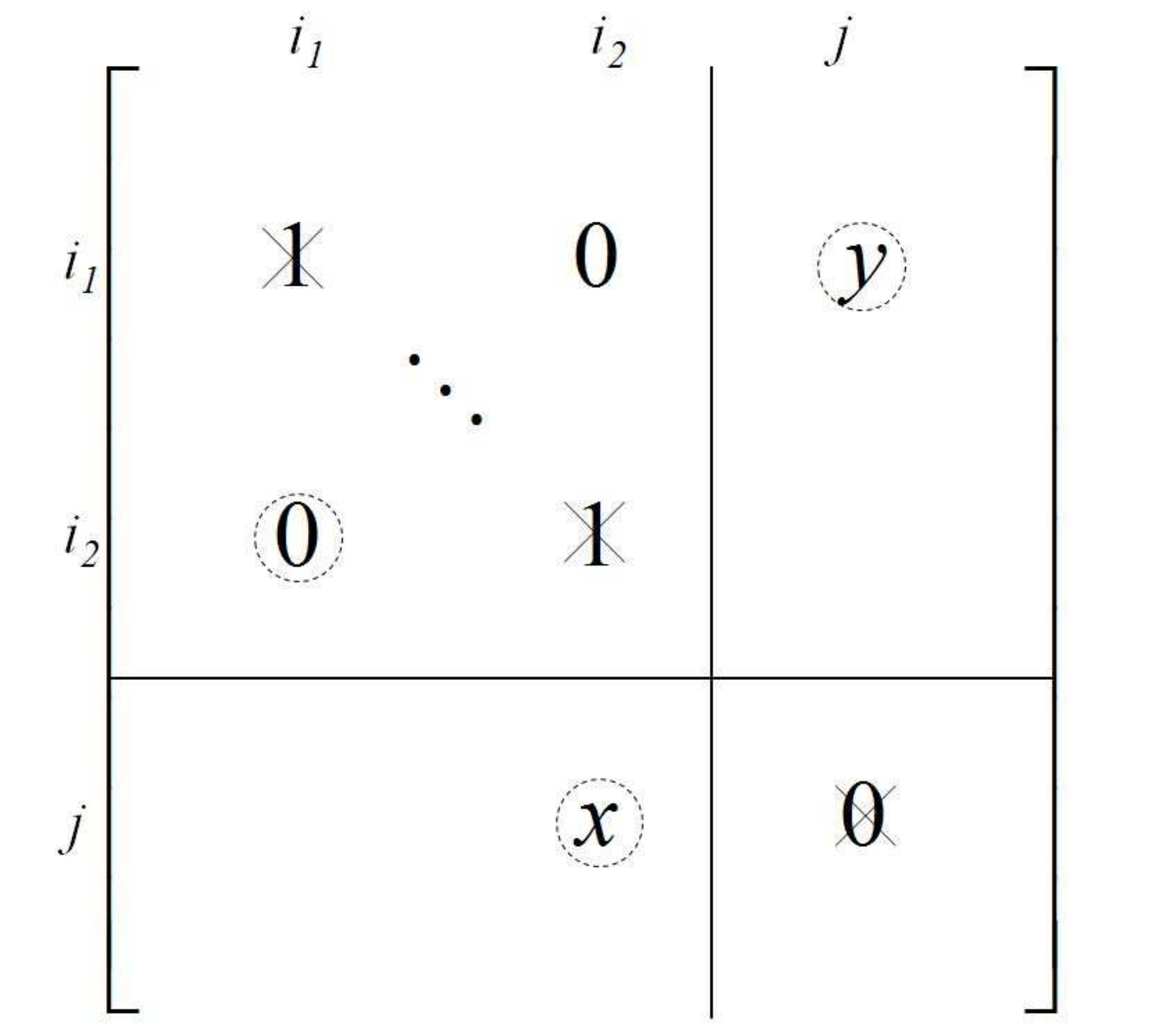}}
  \subfigure[]{\label{fig6b}\includegraphics[scale=0.22]{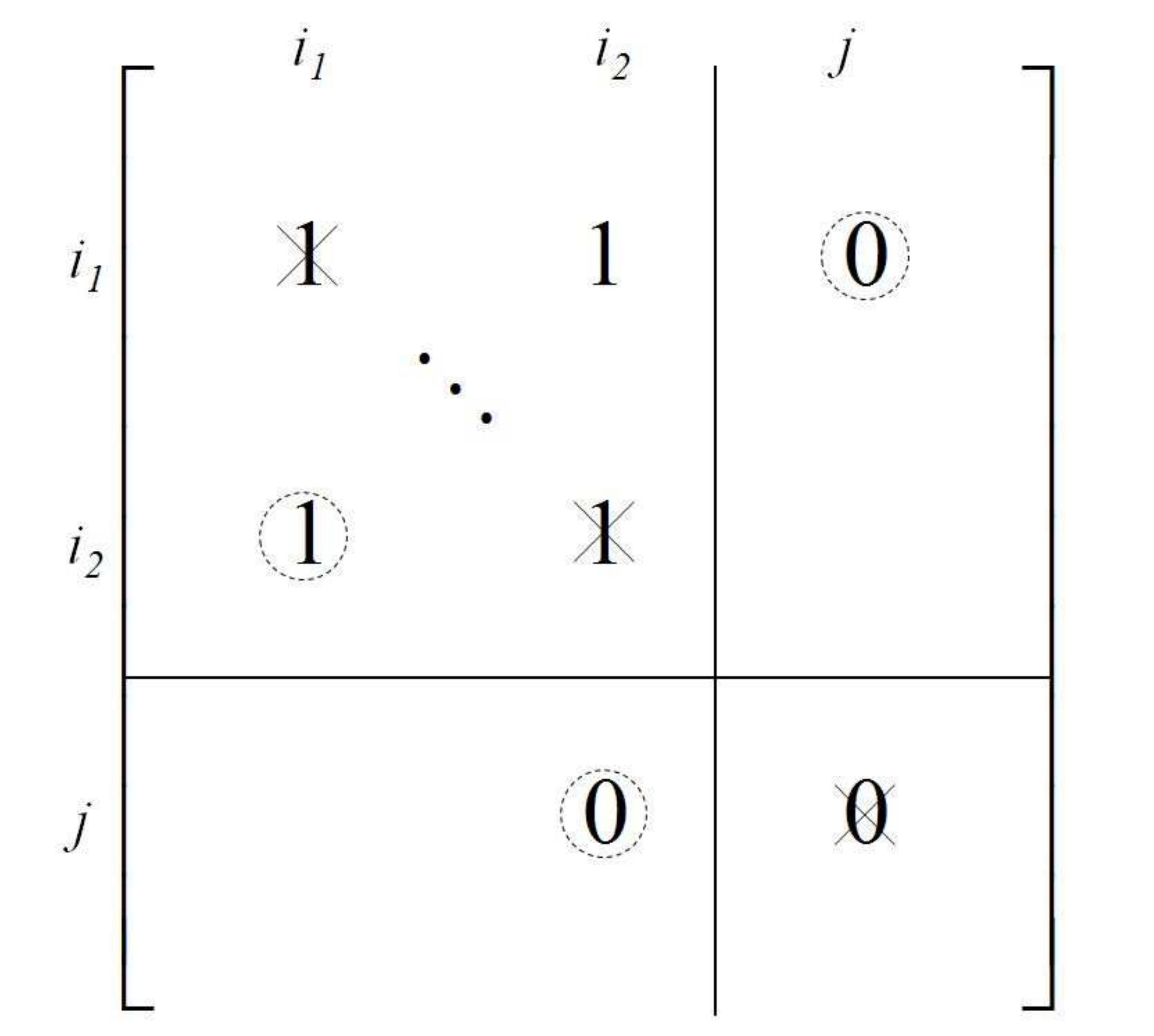}}
  \subfigure[]{\label{fig6c}\includegraphics[scale=0.22]{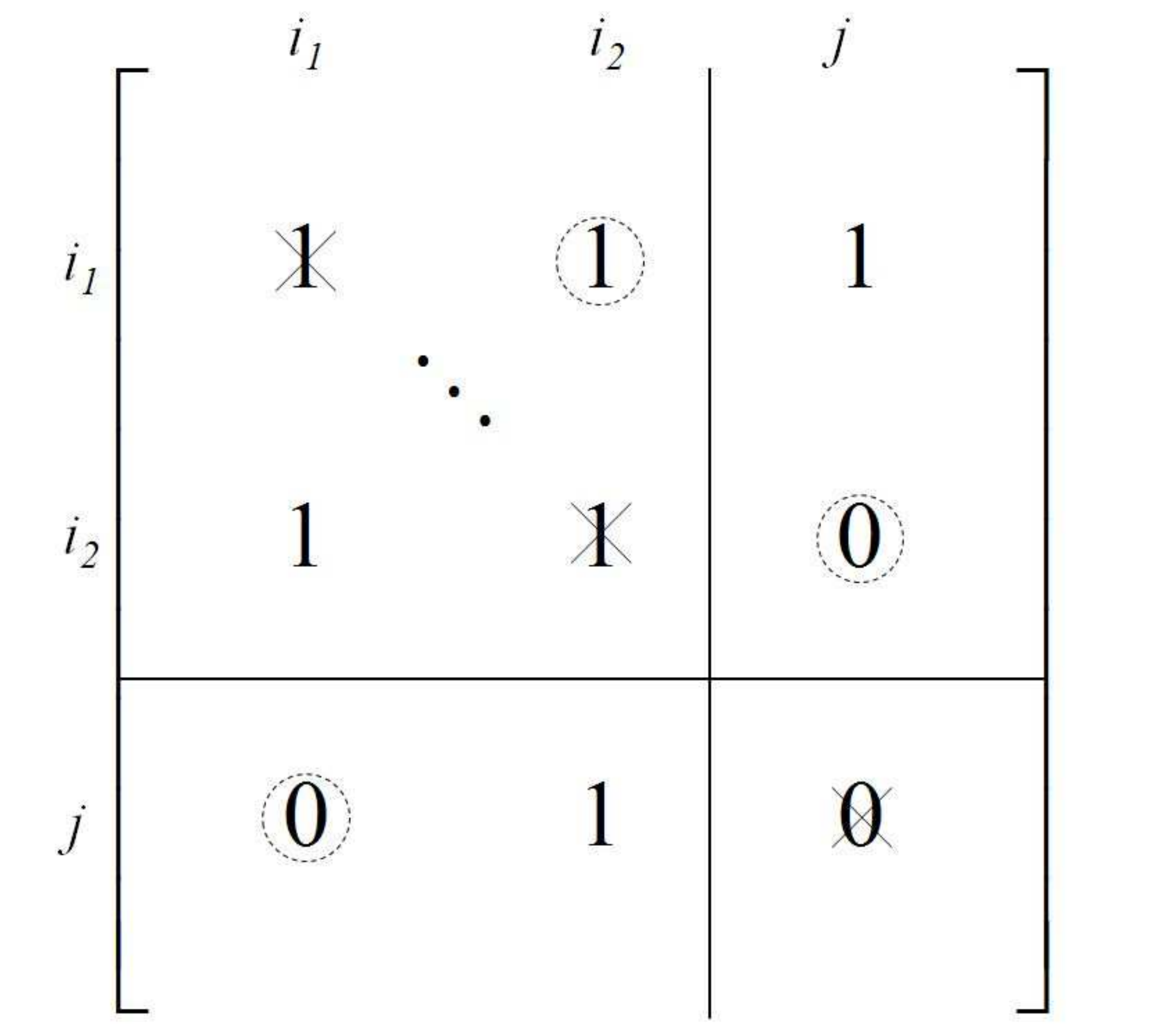}}
  \caption{}
  \label{fig6}
\end{figure}

\end{proof} 

We say that two (0,1)-vectors $u$ and $v$ of the same length are \emph{complementary} (denoted $u \bowtie v$) if their sum is the vector $(1,1,\ldots,1)$.  By Claim \ref{claim:3}, for every $i_1,i_2 \in I$, if  for some $j\in J$, it is true that $m_{i_1,j}=m_{i_2,j}$ then the two rows $A_{(i_1)},A_{(i_2)}$ are  identical, and if $m_{i_1,j}\ne m_{i_2,j}$ then these two rows are complementary. Furthermore - the rows $M_{(i_1)},M_{(i_2)}$ are identical or complementary. We summarize this in:

\begin{claim}\label{claim:4}
Any two rows in $M[I \mid [n]]$ are either identical or complementary.
\end{claim}

Next we show that the property in Claim~\ref{claim:4} holds for any two rows in $M$.

For $x,y\in \{0,1\}$ we define the operation $x\circ y=x+y+1 \pmod 2$ (Figure~\ref{fig7}).

\begin{claim}\label{claim:5}
The submatrix $D$ is the $\circ$-table between the column $C^{(i)}$ and the row $B_{(i)}$, for any $i\in I$.
\end{claim}

\begin{figure}[h!]
\begin{center}
\includegraphics[scale=0.25]{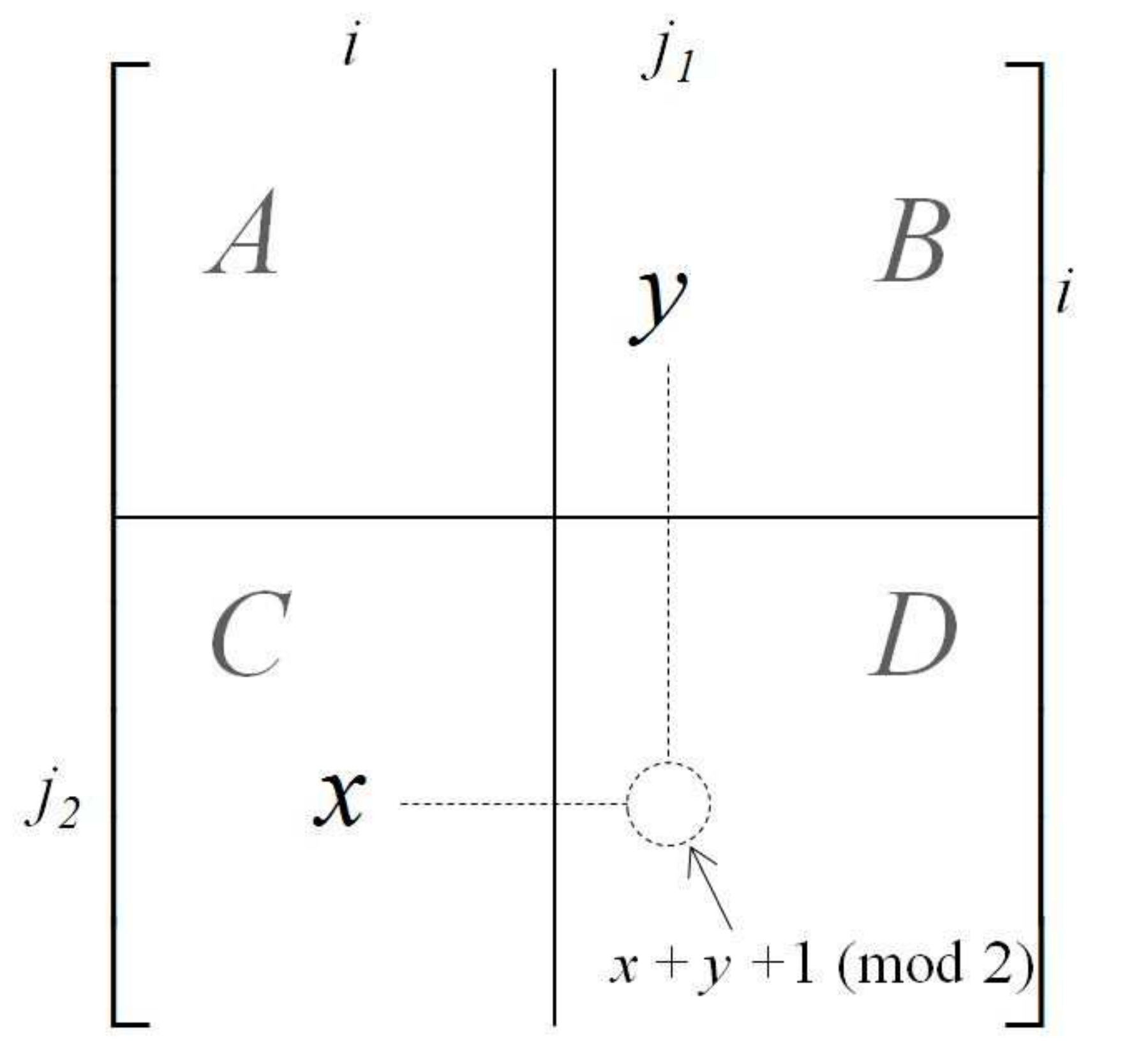}
\end{center}
\caption{}
\label{fig7}
\end{figure}

\begin{proof}[Proof of Claim~\ref{claim:5}]
\renewcommand{\qedsymbol}{}
We first consider $i$ such that $1\le i\le c-1$ (we assumed $c>1$). Let $j_1,j_2\in J$. We may assume that $j_1\ne j_2$ since the case $j_1=j_2$ follows from Claim~\ref{claim:2} and the fact that $D$ has 0's in the diagonal. Let $x=m_{j_2,i}$ and $y=m_{i,j_1}$. We consider three cases: (i) $x=y=0$, (ii) $x=y=1$, and (iii) $x\ne y$.

(i) Assume, for contradiction, that $m_{j_2,j_1}=0$. By Claim 1, $m_{j_1,j_2}=0$, and we can replace $(i,i),(j_1,j_1)$ and $(j_2,j_2)$ in $T$ by $(i,j_1),(j_1,j_2)$ and $(j_2,i)$ and obtain a $c$-GD (Figure~\ref{fig8a}). (ii) By Claim 2, $m_{j_1,i}=m_{i,j_2}=0$, and we can replace the same entries as in Case 1 by $(i,j_2)$, $(j_1,i)$ and $(j_2,j_1)$ and obtain a $c$-GD (Figure~\ref{fig8b}). (iii) Here is where we need the assumption $i\le c-1$. We perform the same replacement as in Case 1, but this time on the GD $T'$, and obtain a $c$-GD (Figure~\ref{fig8c}. Recall that $T'$ is a $(c-1)$-GD).

\begin{figure}[h!]
  \centering
  \subfigure[]{\label{fig8a}\includegraphics[scale=0.22]{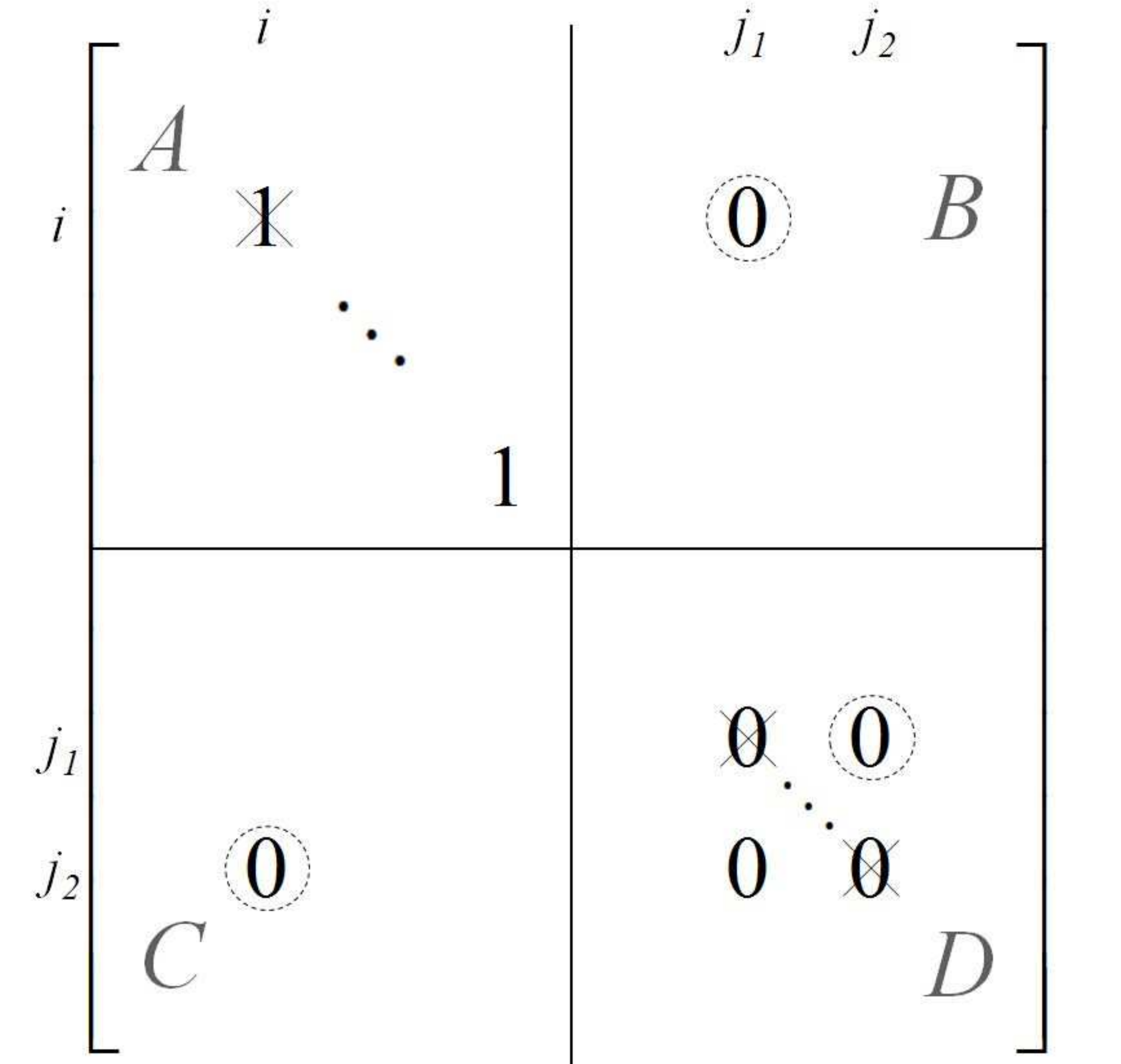}}
  \subfigure[]{\label{fig8b}\includegraphics[scale=0.22]{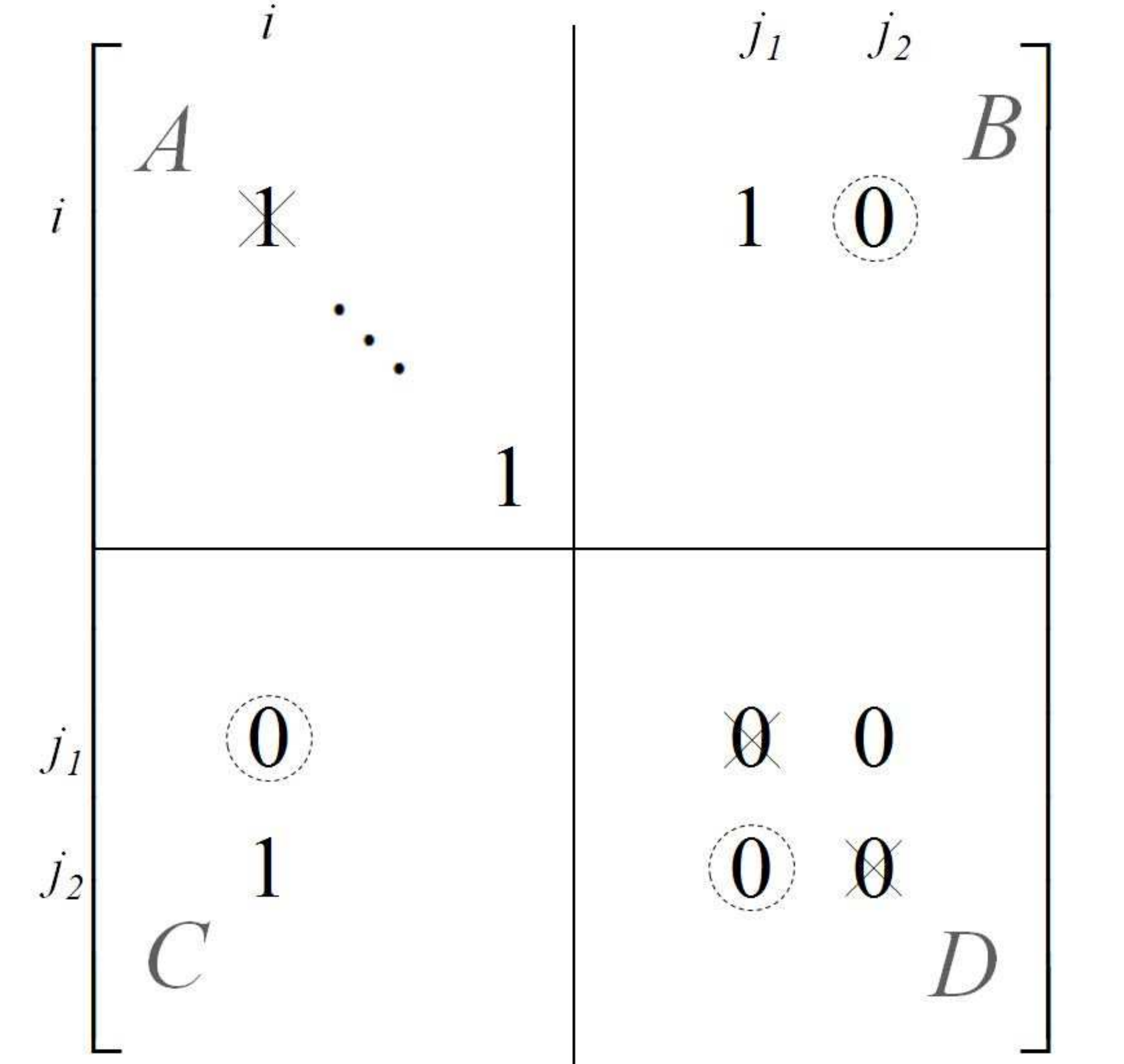}}
  \subfigure[]{\label{fig8c}\includegraphics[scale=0.22]{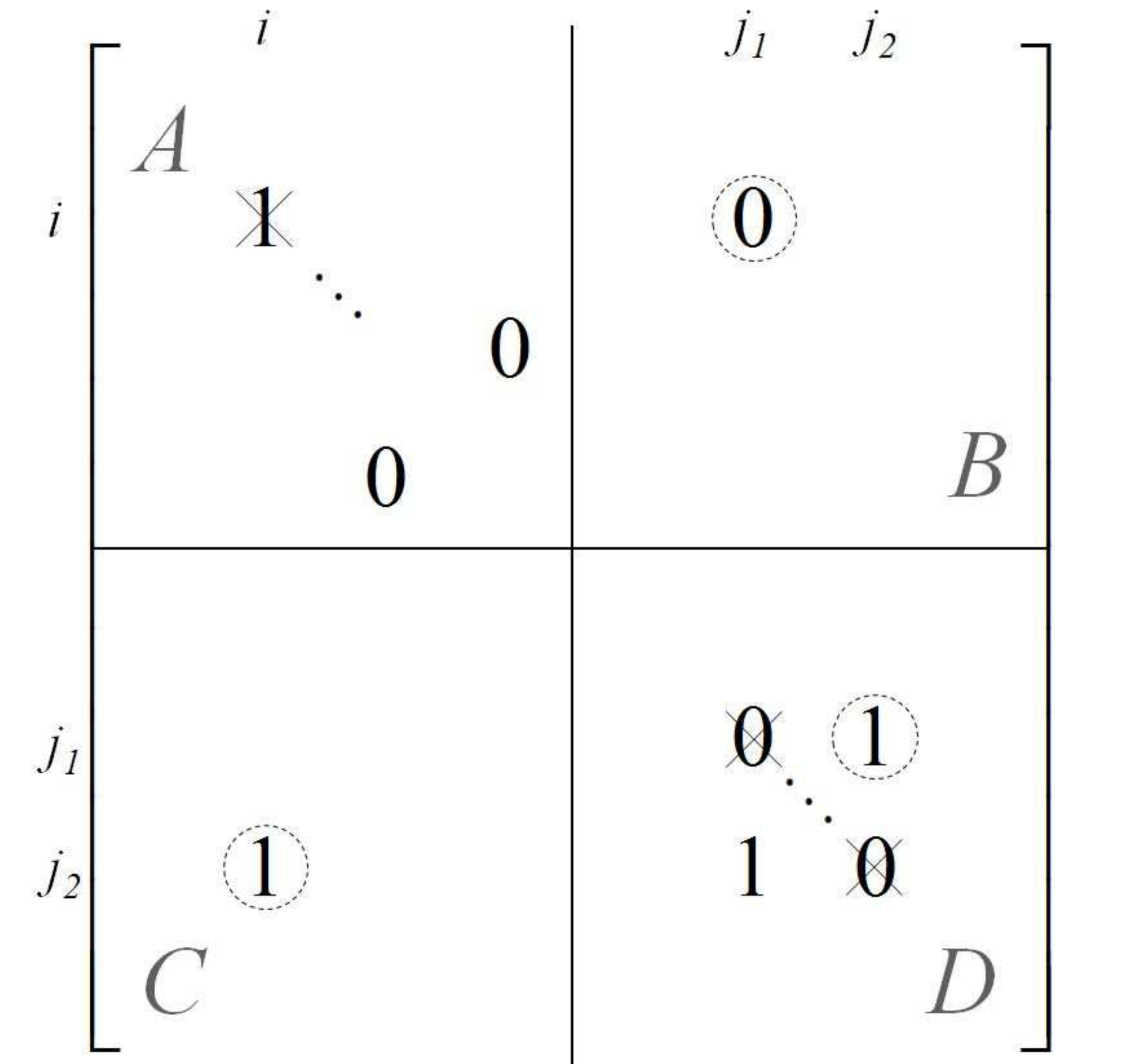}}
  \caption{}
  \label{fig8}
\end{figure}

It remains to prove the claim for $i=c,c+1$. It follows from Claim \ref{claim:4} that any two rows of $B$ are either identical or complementary. Thus, by Claim~\ref{claim:2}, any two columns of $C$ are either identical or complementary. If there exists $j<c$ such that $B_{(c)} = B_{(j)}$, then $C^{(c)} = C^{(j)}$. Since $D$ is the $\circ$-table between $C^{(j)}$ and $B_{(j)}$, it is also the $\circ$-table between $C^{(c)}$ and $B_{(c)}$. If all $j<c$ satisfy $B_{(c)} \bowtie B_{(j)}$, then for any such $j$, we have $C^{(c)} = B_{(j)}^T$ and $C^{(j)} =B_{(c)}^T$ by Claim~\ref{claim:2}. Since $\circ$ is commutative we again have that $D$ is the $\circ$-table between $C^{(c)}$ and $B_{(c)}$. A similar argument holds for $i=c+1$.
\end{proof} 

\begin{claim}\label{claim:6}
Any two rows of $M$ are either identical or complementary.
\end{claim}

\begin{proof}[Proof of Claim~\ref{claim:6}]
\renewcommand{\qedsymbol}{}
The fact that any two rows in $M[J|[n]]$ are either identical or complementary follows in the same manner as Claim~\ref{claim:4}.
Now, assume $i\in I, j\in J$. We want to show that $M_{(i)}$ is either identical or complementary to $M_{(j)}$.
From Claim~\ref{claim:3} we know that $A_{(i)}$ is either identical or complementary to $C_{(j)}$ and from Claim~\ref{claim:5} we have that $B_{(i)}$ is either identical or complementary to $D_{(j)}$.
We need to show that $A_{(i)}$ is identical to $C_{(j)}$ if and only if $B_{(i)}$ is identical to $D_{(j)}$. Note that $m_{ii}=1$, $m_{jj}=0$ and $m_{ij}\ne m_{ji}$. So, if $m_{ji}=1$ we have identity in both cases and if $m_{ji}=0$ we have complementarity in both cases.
\end{proof} 

Suppose all the rows of $M$ are identical. Then, the first $c+1$ columns are all-1 and the rest of the columns are all-0. So, any GD has exactly $c+1$ 1s. So, $a=b=c+1$, which is obviously not the case. Thus, by Claim~\ref{claim:6}, we can permute the rows and columns to obtain a matrix $M'$ consisting of four submatrices $M_1,M_2,M_3$ and $M_4$ of positive dimensions, where $M_1$ and $M_4$ are all-1, and $M_2$ and $M_3$ are all-0 (Figure~\ref{fig9}).

\begin{figure}[h!]
\begin{center}
\includegraphics[scale=0.25]{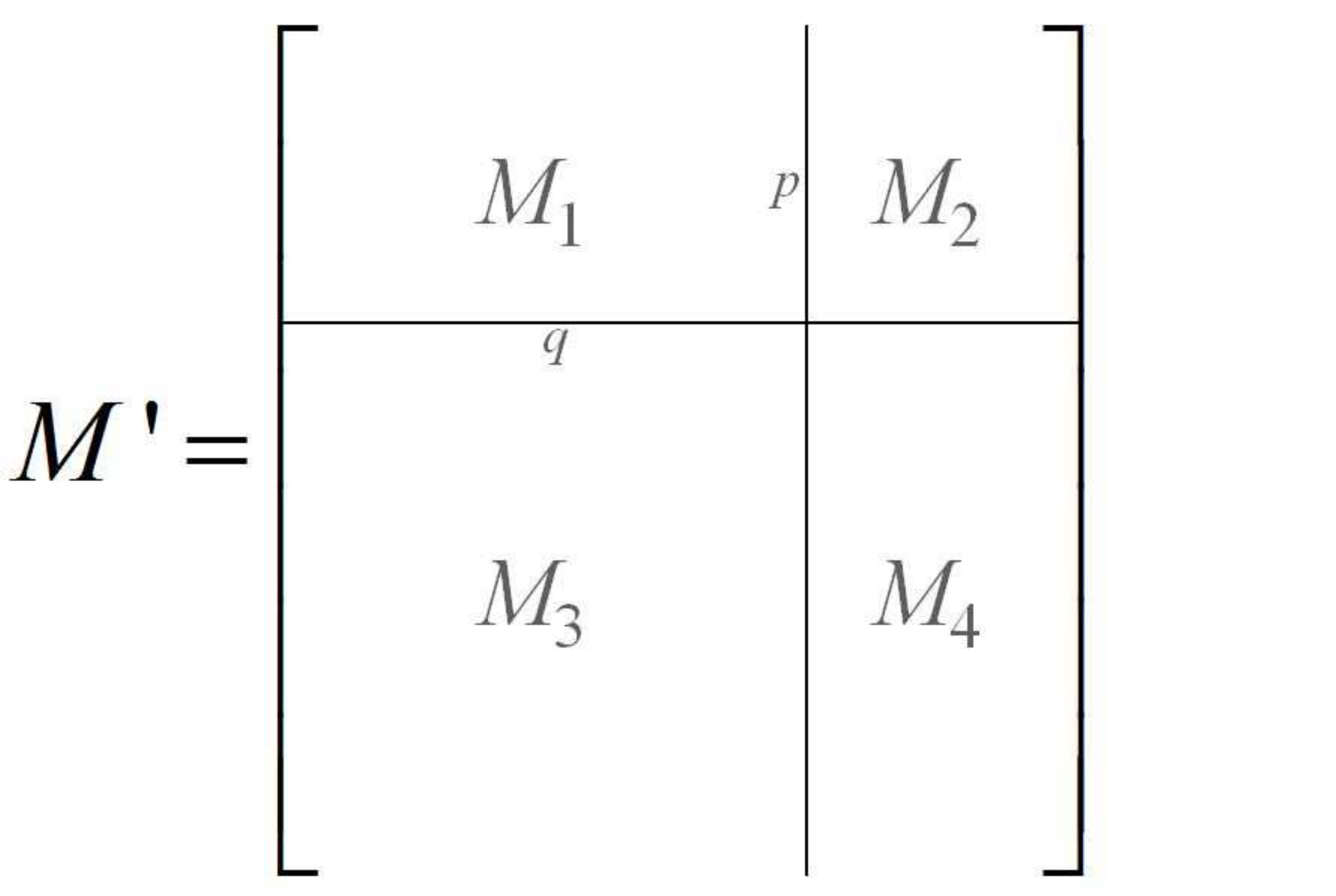}
\end{center}
\caption{}
\label{fig9}
\end{figure}

Thus, $F$ is rigid (Definition~\ref{def:rigidity}), contrary to the hypothesis. We conclude that there must be a $c$-GD in $M$.
\end{proof} 

In the case that the partition $E(G)=F\cup (E(G)\setminus F)$ is rigid, if there exists a partition $P_{c+1}$ such that $|P_{c+1}\cap F|=c+1$, then clearly there is no partition $P_c$ such that $|P_{c}\cap F|=c$. The proof of Theorem~\ref{theorem:d} shows that in this case, for any $c$ between $a$ and $b$ there is a partition $P_{c'}$ such that $0\le|P_{c'}\cap F|-c\le 1$.

\begin{corollary}\label{corollary:d}
Let $G=K_{n,n}$ and assume the partition $E(G)=F\cup (E(G)\setminus F)$ is not rigid. Then, there exist perfect matchings $P_1$ and $P_2$ such that $|P_1\cap F|= \floor*{\frac{|F|}{n}}$ and $|P_2\cap F|= \ceil* {\frac{|F|}{n}}$.
\end{corollary}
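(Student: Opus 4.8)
\textbf{Proof plan for Corollary \ref{corollary:d}.}
The plan is to deduce this from Theorem \ref{theorem:d} together with the two extreme facts stated just before Corollary \ref{theorem:m=2nonrigidcase}, namely that when $F$ is not rigid there is a perfect matching $N$ with $|N \cap F| = \nu(F)$ and one with $|N \cap F| = n - \nu(E(G) \setminus F)$, and then to observe that these two extremes sandwich both $\floor*{|F|/n}$ and $\ceil*{|F|/n}$. First I would recall that decomposing $E(K_{n,n})$ into $n$ perfect matchings (a proper edge-colouring, i.e. the rows of a Latin square) produces matchings $M_1, \ldots, M_n$ with $\sum_{t=1}^n |M_t \cap F| = |F|$, so by averaging some $M_s$ has $|M_s \cap F| \ge |F|/n$, hence $|M_s \cap F| \ge \ceil*{|F|/n}$, and some $M_{s'}$ has $|M_{s'} \cap F| \le |F|/n$, hence $|M_{s'} \cap F| \le \floor*{|F|/n}$. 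Combined with the two exact-value matchings above, we certainly have perfect matchings attaining some value $\le \floor*{|F|/n}$ and some value $\ge \ceil*{|F|/n}$.

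Next I would invoke Theorem \ref{theorem:d} in the form noted right after its proof: since $F$ is not rigid, for every integer $c$ with $n - \nu(E(G)\setminus F) \le c \le \nu(F)$ there is a perfect matching $N$ with $|N \cap F| = c$ (this is the ``It follows from Theorem~\ref{theorem:d}\ldots'' sentence, obtained by repeatedly applying the theorem to fill in the gap between any two attained values). It therefore suffices to check that both $\floor*{|F|/n}$ and $\ceil*{|F|/n}$ lie in the interval $[\,n - \nu(E(G)\setminus F),\ \nu(F)\,]$. The upper inequalities $\ceil*{|F|/n} \le \nu(F)$ and $\floor*{|F|/n} \le \nu(F)$ follow because, as shown above, a perfect matching with $|N\cap F| \ge \ceil*{|F|/n}$ exists, and $\nu(F)$ is by definition the maximum of $|N \cap F|$ over perfect matchings $N$. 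Symmetrically, applying the same averaging argument to $E(G) \setminus F$ gives a perfect matching with $|N \cap (E(G)\setminus F)| \ge \ceil*{|E(G)\setminus F|/n}$, i.e. $|N \cap F| \le n - \ceil*{|E(G)\setminus F|/n}$; one then checks the elementary identity $n - \ceil*{(n^2 - |F|)/n} = \floor*{|F|/n}$, so $\floor*{|F|/n} \ge n - \nu(E(G)\setminus F)$, and a fortiori $\ceil*{|F|/n} \ge n - \nu(E(G) \setminus F)$.

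With both target values shown to lie in the admissible interval, Theorem \ref{theorem:d} (in its iterated ``every intermediate value is attained'' form) yields perfect matchings $P_1$ with $|P_1 \cap F| = \floor*{|F|/n}$ and $P_2$ with $|P_2 \cap F| = \ceil*{|F|/n}$, which is exactly the assertion. I do not expect a genuine obstacle here: the only thing to be careful about is the bookkeeping when $n \mid |F|$, in which case $\floor*{|F|/n} = \ceil*{|F|/n}$ and $P_1 = P_2$ may be taken, and to make sure the averaging argument is applied on \emph{both} sides ($F$ and its complement) so that the interval $[\,n-\nu(E(G)\setminus F),\ \nu(F)\,]$ is seen to contain $|F|/n$ and hence both of its roundings. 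The mild subtlety — really the only place any thought is needed — is the arithmetic identity relating $\nu(E(G)\setminus F)$ to a lower bound for $|P_1 \cap F|$, i.e. that ``many edges of the complement in some matching'' translates into ``few edges of $F$, but still at least $\floor*{|F|/n}$''; everything else is immediate from definitions and from Theorem \ref{theorem:d}.
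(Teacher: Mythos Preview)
Your proposal is correct and follows exactly the route the paper implicitly intends: the corollary is stated without proof, but it is clearly meant to follow from the ``every intermediate value is attained'' consequence of Theorem~\ref{theorem:d} (the sentence preceding Corollary~\ref{theorem:m=2nonrigidcase}) together with the averaging argument over a decomposition of $E(K_{n,n})$ into $n$ perfect matchings. Two minor remarks: the ``every $c$ in $[\,n-\nu(E(G)\setminus F),\,\nu(F)\,]$ is attained'' fact appears \emph{before} the proof of Theorem~\ref{theorem:d}, not after; and $\nu(F)=\max_N |N\cap F|$ is not the definition of $\nu$ but a (true and easy) consequence of the fact that any matching in $F$ extends to a perfect matching of $K_{n,n}$.
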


\section{Fair representation by perfect matchings in $K_{n,n}$, the case of three parts}
In this section we prove Conjecture \ref{equirep00} for $m=3$, namely:

\begin{theorem}\label{equirep=3}
Suppose that  the edges of $K_{n,n}$ are partitioned into
 sets $E_1, E_2, E_3$. Then,
there exists a perfect matching
$F$ in $K_{n,n}$ satisfying $\ceil*{ \frac{|E_i|}{n}}+1 \ge |F \cap E_i| \ge \floor*{ \frac{|E_i|}{n}
 } -1$ for every $i= 1, 2, 3$.
\end{theorem}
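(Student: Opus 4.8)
\medskip
\noindent\textbf{Plan of proof.}
For a perfect matching $F$ of $K_{n,n}$ write $v(F)=(|F\cap E_1|,|F\cap E_2|,|F\cap E_3|)$. Since $|F|=n$, every vector $v(F)$, as well as the target point $c:=\tfrac1n(|E_1|,|E_2|,|E_3|)$, lies in the $2$-dimensional affine plane $\Pi=\{x\in\mathbb{R}^3:x_1+x_2+x_3=n\}$. The assertion of the theorem is exactly that some $v(F)$ lies in the axis-parallel box $B(c):=\prod_i[\lfloor c_i\rfloor-1,\ \lceil c_i\rceil+1]$, i.e.\ at $\ell_\infty$-distance less than $2$ from $c$, the precise endpoints arising from rounding $c$ to integer vectors. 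The approach is a two-dimensional intermediate-value argument in the style of Sperner's lemma; the numerical fuel is that a single \emph{swap} --- replacing two edges of a perfect matching by the two other edges on the same four vertices --- changes each coordinate of $v$ by at most $2$, which matches the width of $B(c)$ in each direction.

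\smallskip
\noindent\emph{Step 1: $c$ is enclosed by the matching vectors.} Decompose $E(K_{n,n})$ into $n$ perfect matchings $D_1,\dots,D_n$ (for instance the $n$ cyclic shifts). Then $\sum_{k=1}^n v(D_k)=(|E_1|,|E_2|,|E_3|)=nc$, so $c=\tfrac1n\sum_k v(D_k)$ is a convex combination of the $v(D_k)$; in particular $c$ lies in the convex polygon $Q:=\mathrm{conv}\{v(F):F\ \text{a perfect matching}\}\subseteq\Pi$. After disposing of the degenerate cases (for instance $c\notin\mathrm{int}\,Q$, which forces strong structure such as some $E_i=\emptyset$) we may assume $c\in\mathrm{int}\,Q$. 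By Carath\'eodory in the plane $\Pi$ one may pick three perfect matchings $F_1,F_2,F_3$ with $c$ in the interior of the triangle spanned by $v(F_1),v(F_2),v(F_3)$.

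\smallskip
\noindent\emph{Step 2: the topological core.} Using connectedness of the swap graph on perfect matchings (transpositions generate $S_n$), join $F_1\to F_2\to F_3\to F_1$ by swap-paths, routed so that the resulting closed walk $W$ in the swap graph has $v\circ W$ winding once around $c$; then build a triangulated $2$-disk $\mathcal{K}$ together with a map $\phi$ from $V(\mathcal{K})$ to perfect matchings such that (i) matchings assigned to adjacent vertices of $\mathcal{K}$ are at swap-distance at most $1$, hence their $v$-vectors differ by at most $2$ in each coordinate; (ii) $\phi$ restricted to $\partial\mathcal{K}$ traces $W$; and (iii) the interior of $\mathcal{K}$ is a ``swap filling'' of $W$ --- a van~Kampen-type disk in the $2$-complex of commuting (disjoint) swaps of $K_{n,n}$, or an explicit such disk built from the cyclic structure of the $D_k$. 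Assume for contradiction that no $F$ satisfies the theorem, i.e.\ $v(F)\notin B(c)$ for all $F$; then each $F$ has a witness $(i,\varepsilon)\in\{1,2,3\}\times\{+1,-1\}$ with $\varepsilon\,v(F)_i>\varepsilon\,c_i+1$, and by the swap bound no swap-edge joins a $(+i)$-witnessed matching to a $(-i)$-witnessed one. Transporting witnesses through $\phi$ yields a labeling of $\mathcal{K}$ with no such complementary edge, compatible on $\partial\mathcal{K}$ with the winding of $v\circ W$ about $c$; a Sperner--Tucker--Ky~Fan-type lemma on the disk then produces either a complementary edge (impossible) or a vertex of $\mathcal{K}$ whose matching has $v$ inside $B(c)$, contradicting the assumption. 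Hence the required matching exists.

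\smallskip
\noindent\emph{Main obstacle.} The crux, and the bulk of the argument, is Step 2: producing the filled triangulated disk $\mathcal{K}$ with the prescribed behaviour on $\partial\mathcal{K}$. Concretely one must (a) route the swap-paths between $F_1,F_2,F_3$ so the closed walk genuinely winds around $c$, and (b) fill the enclosed region by a tame $2$-complex of swaps; this is where the structure of $K_{n,n}$ has to be used --- one needs enough commuting-swap $2$-cells to carry out the filling, with enough control to run the labeling argument. A secondary, purely arithmetic point is to check that the slack in $B(c)$ absorbs both the $\pm1$ from rounding $c$ and the step size $2$ of a swap, which is exactly why the theorem carries the $\lfloor\cdot\rfloor-1$ and $\lceil\cdot\rceil+1$ rather than the tight bounds of Conjecture~\ref{equirep00}.
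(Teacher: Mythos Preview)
Your overall strategy---encode perfect matchings as lattice points in the plane $x_1+x_2+x_3=n$, surround the target $c$ by a closed walk of matchings, fill it by a $2$-disc of ``nearby'' matchings, and apply a Sperner/Tucker-type parity argument---is precisely the shape of the paper's proof. The gap is exactly where you flag it, and it is a genuine one, not just a matter of writing out details.

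With your choice of adjacency (a single swap, i.e.\ Hamming distance $2$) there are \emph{no} nondegenerate triangles: three distinct permutations cannot be pairwise at Hamming distance $2$ (if $\sigma_1=e$ and $\sigma_2=(12)$, any transposition $\sigma_3\ne(12)$ is at Hamming distance $3$ or $4$ from $\sigma_2$). So a literal ``triangulated disc'' with swap-edges is impossible; at best you get a square complex. Moreover, the $2$-complex on $S_n$ with transposition edges and only commuting-swap squares is \emph{not} simply connected: a presentation of $S_n$ by all transpositions also requires the braid relations $(\tau_{ij}\tau_{jk})^3=1$, contributing hexagonal $2$-cells that do not decompose into commuting squares (and for $n=3$ there are no disjoint transpositions at all). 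So the proposed filling cannot be carried out as stated; and if you throw in the braid hexagons, you have introduced cells across which a coordinate of $v$ can jump by $3$, which destroys your ``no $(+i)/(-i)$ edge'' invariant and with it the labeling argument. Your Step~1 dismissal of $c\in\partial Q$ as forcing some $E_i=\emptyset$ is also too quick.

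The paper resolves this by relaxing the adjacency to $\sigma\sim\tau\iff d(\sigma,\tau)\le 3$, so that genuine $2$-simplices exist, and then proving that the resulting clique complex $\cc$ is simply connected via an explicit null-homotopy built from the maps $shift_i$ (sending $i\mapsto i$ and patching the preimage); this replaces your unproven van~Kampen disc and is the technical heart of the argument. The boundary is set up not by Carath\'eodory but as a \emph{hexagon}, with corners produced by a K\"onig-type lemma giving matchings with an $i^{(+)}$ and a $j^{(-)}$ property simultaneously, and sides filled using a separate connectedness lemma for the $\sim$-graph on $\{\sigma:\sum a_{i\sigma(i)}\ge k\}$; a hexagonal Sperner lemma then yields the rainbow triangle. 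Finally, because a $\sim$-edge allows each coordinate to move by $3$, the rainbow triangle does not immediately land inside your box $B(c)$: the paper closes with a nontrivial case analysis on the $3\times3$ deviation matrix $b_{ij}=d_i(\sigma_j)$ to manufacture the desired matching. That endgame is exactly the price paid for weakening ``swap'' to ``distance $\le 3$''.
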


It clearly suffices to prove the theorem for  partitions of $E(K_{n, n})$ into sets $E_1, E_2, E_3$ such that
$|E_i| = k_i n$,  for $k_i$  integers $(i=1,2,3)$.
Assuming negation of Theorem \ref{equirep=3}
 there is no perfect matching
with exactly $k_i$ edges from each $E_i$.
As already mentioned, the theorem is patently true if one of the sets $E_i$ is empty,
so we may assume $k_1, k_2, k_3 \in \{1, \ldots, n-2\}$.

We  identify perfect matchings in $K_{n,n}$ with permutations in $S_n$.
For  $\sigma, \tau \in S_n$, the {\em Hamming distance} (or plainly {\em distance}) $d(\sigma, \tau)$ between $\sigma$ and $\tau$ is
 $|\{i \mid \sigma(i) \neq \tau(i)\}|$.
We write $\sigma \sim \tau$ if $d(\sigma, \tau) \le 3$. Let $\cc$ be the simplicial complex of the cliques of this relation.
So, the vertices of $\cc$ are the permutations in $S_n$ and the simplexes are the sets of permutations each two of which have distance at most 3 between them.
The core of the proof of the theorem will be in showing that $\cc$ is simply connected, which will enable us to use Sperner's lemma.

Here is a short outline of the proof of the theorem.
Clearly, for each $i \le 3$ there exits a matching $F_i$  representing $E_i$ fairly, namely $|F_i \cap E_i| \ge \floor*{ \frac{|E_i|}{n} }$.
We shall  connect every pair $F_i,F_j~~(1\le i <j\le 3)$ by a path consisting of perfect matchings representing fairly $E_i \cup E_j$, in such a way that every two adjacent matchings are $\sim$-related. This  generates a triangle $D$ that is not necessarily simple (namely it may have repeating vertices), together with a triangulation $T$ of its circumference, and an assignment $A$ of matchings to its vertices. We shall then show that there exists a triangulation $T'$ extending $T$ and contained in $\cc$ (meaning that there is an assignment $A'$ extending $A$ of perfect matchings to the vertices of $T'$), such that the perfect matchings assigned to adjacent vertices are $\sim$-related. We color a vertex $v$ of $T'$ by color  $i$ if $A'(v)$ represents fairly the set $E_i$. By our construction, this coloring satisfies the conditions of the $2$-dimensional version of Sperner's lemma, and applying
 the lemma we obtain a multicolored triangle. We shall then show that at least one of the matchings assigned to the vertices of this triangle satisfies the condition required in the theorem. \\

\subsection{Topological considerations}

 Let us recall the $2$-dimensional version of Sperner's lemma:

\begin{lemma}
\label{originalsperner}
Let $T$ be a triangulation of a triangle $ABC$ and suppose that the vertices of $T$ are colored $ 1,2,3$.
Assume that
\begin{itemize}
\item The vertex A has color 1.
\item The vertex B has color 2.
\item The vertex C has color 3.
\item Every vertex in the subdivision of the edge AB has either color 1 or color 2.
\item Every vertex in the subdivision of the edge BC has either color 2 or color 3.
\item Every vertex in the subdivision of the edge CA has either color 3 or color 1.
\end{itemize}
Then $T$ contains a region triangle with three vertices colored 1, 2 and 3.
\end{lemma}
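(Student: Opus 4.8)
The plan is to prove Lemma~\ref{originalsperner} by the classical mod-$2$ double counting of ``doors''. Call an edge of the triangulation $T$ a \emph{door} if its two endpoints receive the colors $1$ and $2$, and call a region triangle of $T$ a \emph{rainbow} triangle if its three vertices receive the three distinct colors $1,2,3$. The single observation driving the proof is that, for any region triangle $\Delta$, the number of doors among the three edges of $\Delta$ is odd if and only if $\Delta$ is rainbow: if the multiset of colors on $\Delta$ is $\{1,2,3\}$ this number is exactly $1$; if it is $\{1,1,2\}$ or $\{1,2,2\}$ it equals $2$; and in all remaining cases it is $0$.

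Next I would count, modulo $2$, the number $N$ of incident pairs $(\Delta,e)$ where $\Delta$ is a region triangle of $T$ and $e$ is a door on the boundary of $\Delta$. Summing over region triangles and using the observation above, $N \equiv r \pmod 2$, where $r$ is the number of rainbow triangles. Summing instead over doors: an interior door lies on the boundary of exactly two region triangles and contributes $2$ to $N$, while a door lying on the boundary of the big triangle $ABC$ lies on the boundary of exactly one region triangle and contributes $1$. Hence $N$ is congruent mod $2$ to the number of doors on the boundary of $ABC$.

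It remains to show that the number of boundary doors is odd. By hypothesis every vertex on the side $BC$ has color $2$ or $3$, and every vertex on $CA$ has color $3$ or $1$, so no door can lie on $BC$ or on $CA$; every boundary door lies on $AB$. Along $AB$, reading from $A$ to $B$, the vertices use only the colors $1$ and $2$, starting with color $1$ at $A$ and ending with color $2$ at $B$; therefore the number of consecutive pairs along $AB$ whose two colors differ --- which is precisely the number of doors on $AB$ --- is odd. Combining the three congruences gives $r \equiv N \equiv (\text{number of boundary doors}) \equiv 1 \pmod 2$, so $r$ is odd, and in particular $r \ge 1$, which produces the required rainbow region triangle.

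The argument has no genuine obstacle; the only points needing a little care are the short case analysis in the first observation and the bookkeeping fact that, in a triangulation of a $2$-disc, each interior edge borders exactly two region triangles while each boundary edge borders exactly one. (Alternatively one may run the equivalent path-following argument: form a graph whose nodes are the region triangles of $T$ together with one extra ``outer'' node, joining two nodes whenever they share a door; every node then has degree $0$, $1$ or $2$, the outer node has odd degree by the count on $AB$, and the handshake lemma yields a second odd-degree node, which is necessarily a rainbow triangle.)
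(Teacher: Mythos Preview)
Your proof is correct: this is the standard mod-$2$ double-counting argument for the two-dimensional Sperner lemma, and each step is sound. The paper itself does not give a proof of this lemma --- it is simply stated as a classical result being recalled --- so there is nothing to compare against; you have supplied the canonical argument where the paper chose to cite rather than prove.
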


We shall need a ``hexagonal'' version of the lemma:

\begin{lemma}
\label{hexagonsperner}
Let  $T$ be a triangulation of a hexagon, whose outer cycle is the union of six paths $p_1, \ldots, p_6$
(which are, in a cyclic order, subdivisions of the six edges of the hexagon). Suppose that the vertices of $T$ are colored $ 1,2,3$, in such a way that
\begin{itemize}
\item No vertex in $p_1$ has color 1.
\item No edge in $p_2$ is between two vertices of colors 1 and 2.
\item No vertex in $p_3$ has color 2.
\item No edge in $p_4$ is between two vertices of colors 2 and 3.
\item No vertex in $p_5$ has color 3.
\item No edge in $p_6$ is between two vertices of colors 3 and 1.
\end{itemize}
Then $T$ contains a region triangle with three vertices colored 1, 2 and 3.
\end{lemma}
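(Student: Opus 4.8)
The natural approach is to reduce Lemma~\ref{hexagonsperner} to the triangular Sperner lemma (Lemma~\ref{originalsperner}) by a ``folding'' or ``collapsing'' argument: I want to glue together the pairs of edges that carry a common color restriction so that the hexagon becomes a triangle and the six boundary conditions become the three boundary conditions of Lemma~\ref{originalsperner}. Concretely, observe that the hexagon boundary, read cyclically, alternates between a ``vertex'' constraint ($p_1, p_3, p_5$ forbid a single color) and an ``edge'' constraint ($p_2, p_4, p_6$ forbid a particular bichromatic edge). The first thing I would do is set up the correspondence: the three paths $p_1, p_3, p_5$ should become the three edges $AB$, $BC$, $CA$ of a triangle, while each of $p_2, p_4, p_6$ is contracted to a single vertex ($A$, $B$, $C$ respectively).

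**Main steps.** First I would dispose of the edge constraints. The condition ``no edge in $p_2$ is between colors $1$ and $2$'' means that the colors appearing along $p_2$, restricted to $\{1,2\}$, never switch: so either every vertex of $p_2$ that is colored $1$ or $2$ is colored $1$, or every such vertex is colored $2$ (color $3$ may appear freely, but that does not break the argument). Actually the cleanest route is to build a new triangulated triangle $\Delta$ together with a simplicial map $\varphi$ from $\Delta$ onto (a subdivision of) the hexagon $T$, collapsing each $p_i$ with $i$ even to a point. For this to produce a genuine triangulation I would first barycentrically subdivide near the three ``short'' sides if necessary so that each $p_{2}, p_4, p_6$ has a consistent coloring pattern; then contract each even path to a vertex, re-coloring that vertex by a color it is allowed to carry (for $p_2$: color $1$ if $p_2$ avoids color $2$ on its $\{1,2\}$-part, else color $2$). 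One checks that contracting a path all of whose $\{1,2\}$-colored vertices share one color, and whose boundary neighbors on $p_1$ avoid $1$ and on $p_3$ avoid $2$, yields a vertex that can legitimately be called $A$ with color $1$ (or symmetrically color $2$), and the triangulation structure is preserved because contracting a boundary path of a disk triangulation to a point gives a disk triangulation. Then the inherited coloring on $\Delta$ satisfies exactly the hypotheses of Lemma~\ref{originalsperner}: $A,B,C$ get colors $1,2,3$; $p_1$ becomes the $AB$-edge with no vertex colored... wait, we need $AB$ to be colored only $1$ or $2$, whereas $p_1$ forbids $1$ — here is where I must be careful about the matching of labels, and I would simply permute color names ($1\mapsto 2, 2\mapsto 3, 3\mapsto 1$ or similar) so that ``$p_1$ has no vertex of color $1$'' becomes ``edge opposite to the vertex of color $1$ uses only the other two colors,'' which is precisely the classical hypothesis. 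Applying Lemma~\ref{originalsperner} to $\Delta$ produces a region triangle of $\Delta$ with vertices colored $1,2,3$; pushing it forward under $\varphi$ gives a region triangle (or a smaller simplex, but a trichromatic simplex must be a genuine triangle) in $T$ with vertices colored $1,2,3$, which is the conclusion.

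**Main obstacle.** The delicate point is the contraction step: collapsing the even-indexed boundary paths must yield an honest triangulation of a triangle rather than a degenerate complex, and the coloring on the collapsed complex must still be well-defined (all vertices identified to $A$ must be assignable a single color). This forces the preliminary observation that along $p_2$ the $\{1,2\}$-coloring is monochromatic, and an analogous statement after possibly refining the triangulation so that the three corner paths can be consistently collapsed simultaneously without interfering with one another at the six hexagon corners. An alternative, perhaps cleaner, route that avoids topological collapsing is a direct parity/counting argument in the spirit of the standard proof of Sperner: count, modulo $2$, the boundary edges of $T$ colored $\{1,2\}$; the six hexagon conditions are engineered precisely so that this count is odd (each $p_{2i-1}$ contributes in a controlled way and each $p_{2i}$ contributes nothing to the relevant parity), which by the usual double-counting of $\{1,2\}$-edges across triangles forces an odd number of trichromatic triangles, hence at least one. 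I would present whichever of these two is shorter; the parity argument is likely the safer choice since it sidesteps the need to verify that the quotient of a triangulated disk is again a triangulated disk.
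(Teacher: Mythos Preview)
Your contraction approach contains a genuine error. You claim that the condition on $p_2$ (``no edge between colors $1$ and $2$'') forces all $\{1,2\}$-colored vertices of $p_2$ to share a single color, but this is false: the color sequence $1,3,2$ along $p_2$ has no $1$--$2$ edge yet contains both a vertex of color $1$ and a vertex of color $2$. Hence there is no consistent way to collapse $p_2$ to a single vertex with a well-defined color, and the quotient argument as sketched does not go through. Your parity sketch has a similar slip: you assert that each $p_{2i}$ contributes nothing to the count of boundary $\{1,2\}$-edges, and while this is true for $p_2$, the constraints on $p_4$ and $p_6$ only forbid $2$--$3$ and $3$--$1$ edges respectively and say nothing about $1$--$2$ edges, so those paths can contribute.

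The paper's proof realizes the idea you were reaching for, but with the dual operation: instead of \emph{contracting} each even-indexed path to a point, it \emph{cones} over it. One adds a new vertex $A$ of color $1$ joined to every vertex of $p_4$, a vertex $B$ of color $2$ joined to every vertex of $p_6$, and a vertex $C$ of color $3$ joined to every vertex of $p_2$. The hexagon becomes a triangulated triangle with corners $A,B,C$, and the odd-indexed paths $p_5,p_1,p_3$ become the sides $AB,BC,CA$, carrying exactly the color restrictions of Lemma~\ref{originalsperner}. The cone triangles are never trichromatic: a cone triangle at $C$, for instance, has apex of color $3$ and base an edge of $p_2$, which by hypothesis is not a $1$--$2$ edge. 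So the trichromatic triangle produced by Sperner's lemma already lies in the original triangulation $T$. Coning sidesteps every difficulty you encountered: no vertices are identified, no color needs to be ``chosen'' for a collapsed path, and the result is manifestly a triangulation. (Your parity route can in fact be completed---one checks that the $\{1,2\}$-edges on $p_4$, $p_5$, $p_6$ always total an odd number---but this needs a short case analysis rather than the blanket statement you made, and the coning argument is cleaner.)
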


\begin{proof}
 Add three vertices to $T$ outside the circumference of the hexagon in the following way.
Add a vertex $A$ of color 1 adjacent to all vertices in $p_4$,
a vertex $B$ of color 2 adjacent to all vertices in $p_6$
and a vertex of color 3 adjacent to all vertices in $p_2$.
Using Sperner's Lemma on this augmented triangulation yields the lemma.
\end{proof}

Our strategy for the proof of Theorem \ref{equirep=3} is the following. First we form a triangulation of a hexagon and
assign a permutation in $S_n$ to each vertex of the triangulatin, where adjacent permutations are $\sim$ related. Afterwards we
color each permutation $\sigma$ with some color $i$, where $E_i$ is fairly represented in $\sigma$.
We then apply Lemma \ref{hexagonsperner} to get three permutations $\sigma_1, \sigma_2, \sigma_3$ which are pairwise $\sim$ related,
and fairly represent $E_1, E_2, E_3$ respectively. We then show that how to use this to construct a permutation
  almost fairly representing all three sets $E_1, E_2, E_3$, simultaneously.

For $i \in [n]$ let
$shift_{i} : S_n \to S_n$ be a function defined as follows.
For every $\sigma \in S_n$, if $\sigma(i) = j$ then

$$shift_i(\sigma)(k) = \left\{
\begin{array} {rcl}
i & \mbox{if }  k=i \\
j & \mbox{if } \sigma(k) = i \\
\sigma(k) & \mbox{otherwise}
\end{array}
\right.
$$


\begin{remark}
Note that if $\sigma(i)=i$ then $shift_i(\sigma)=\sigma$.
\end{remark}


\begin{lemma}
If  $\sigma \sim \tau$ then $shift_i(\sigma) \sim shift_i(\tau)$.
\end{lemma}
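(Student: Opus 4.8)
The plan is to prove the slightly stronger statement that $shift_i$ never increases the Hamming distance, namely $d(shift_i(\sigma),shift_i(\tau))\le d(\sigma,\tau)$ for all $\sigma,\tau\in S_n$; this gives the lemma at once, since $\sigma\sim\tau$ means $d(\sigma,\tau)\le 3$. A first remark is that the crude bound $d(shift_i(\sigma),shift_i(\tau))\le d(\sigma,\tau)+d\big((i\,\sigma(i)),(i\,\tau(i))\big)$ coming from the triangle inequality (after noting $shift_i(\rho)=(i\,\rho(i))\circ\rho$ and that left and right multiplication are isometries for $d$) only yields $6$, so a direct argument on the \emph{disagreement sets} is needed.

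First I would record exactly how $shift_i$ alters a permutation: straight from the definition, $shift_i(\sigma)$ agrees with $\sigma$ at every index except possibly $i$ and $a:=\sigma^{-1}(i)$ (and if $\sigma(i)=i$, i.e.\ $a=i$, it equals $\sigma$ outright), while $shift_i(\sigma)(i)=i$ in every case. Writing $b:=\tau^{-1}(i)$, $D:=\{k:\sigma(k)\ne\tau(k)\}$ and $D':=\{k:shift_i(\sigma)(k)\ne shift_i(\tau)(k)\}$, it follows that $i\notin D'$ (both shifted maps fix $i$) and that every index outside $\{i,a,b\}$ lies in $D'$ if and only if it lies in $D$. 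Hence $D'\subseteq (D\setminus\{i,a,b\})\cup(\{a,b\}\setminus\{i\})$, and the whole problem is reduced to controlling the two indices $a$ and $b$.

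The pivotal observation is the equivalence $a=b\iff a\notin D\iff b\notin D$: since $\sigma(a)=i$ by definition, $a\notin D$ means $\tau(a)=i$, i.e.\ $a=\tau^{-1}(i)=b$, and the converse and the symmetric statement are the same one‑line computation. Now I would split into two cases. If $a\ne b$, then both $a$ and $b$ lie in $D$ and are distinct, so $D\setminus\{i,a,b\}$ has at most $|D|-2$ elements while $\{a,b\}\setminus\{i\}$ has at most $2$, giving $|D'|\le|D|$. If $a=b$, then $a\notin D$, so $D'\subseteq(D\setminus\{i\})\cup(\{a\}\setminus\{i\})$; here one computes directly that $shift_i(\tau)(a)=\tau(i)$ (because $a=b=\tau^{-1}(i)$) while $shift_i(\sigma)(a)=\sigma(i)$, so $a$ can belong to $D'$ only when $\sigma(i)\ne\tau(i)$, i.e.\ only when $i\in D$ — hence the potential extra index $a$ is always paid for by the index $i$ leaving $D$, and again $|D'|\le|D|$.

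The only genuinely delicate point is the bookkeeping in the case $a=b$, where one must avoid double counting: it is precisely the identity $shift_i(\tau)(a)=\tau(i)$ that forces ``$a\in D'$'' to imply ``$i\in D$'', and one should also note that $a=b$ together with $i\in D$ forces $a\ne i$, so that $i$ and $a$ are genuinely distinct indices being exchanged out of and into the disagreement set. Everything else is a direct substitution into the definition of $shift_i$, so no further case analysis on the relative positions of $i,a,b$ or on whether $\sigma(i)$ or $\tau(i)$ equals $i$ is required.
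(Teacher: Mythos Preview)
Your proof is correct, and in fact proves the strictly stronger statement that $shift_i$ is $1$-Lipschitz for the Hamming metric, whereas the paper only proves the weaker implication $d(\sigma,\tau)\le 3\Rightarrow d(shift_i(\sigma),shift_i(\tau))\le 3$. The paper's argument normalizes by symmetry (taking $i=1$ and $\tau=I$ or $\tau=(1\,2)$) and then splits into cases according to whether each of $shift_1(\sigma),shift_1(\tau)$ is trivial; in the nontrivial--nontrivial case it compares $d(shift_1(\sigma),I)$ with $d(\sigma,I)$, reduces to the boundary situation $d(\sigma,I)=5$, and finishes with an ad hoc check that $\sigma(1)=2$ and that the disagreement sets coincide. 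Your approach replaces this with a uniform bookkeeping on the disagreement set $D$: the only indices that can change status under the shift are $i$, $a=\sigma^{-1}(i)$, and $b=\tau^{-1}(i)$, and the equivalence $a=b\Leftrightarrow a\notin D$ lets you balance the gains and losses in either case without ever invoking $d\le 3$. What you gain is generality (the Lipschitz statement) and the elimination of ``without loss of generality'' normalizations and boundary cases; what the paper's version gains is concreteness, since one literally sees the extremal configurations. Either argument suffices for the subsequent applications in the paper (Corollary on simplicial maps and simple connectivity of~$\cc$).
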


\begin{proof}
 Without loss of generality let $i= 1$. If $shift_1(\sigma) = \sigma$ and $shift_1(\tau) = \tau$ then we are done.

{\bf Case I:} $shift_1(\tau) = \tau$ and
$shift_1(\sigma) \neq \sigma$. Without loss of generality $\tau = I$, the identity permutation.
For every $k \in [n]$, if $\sigma(k)=k$ then also $shift_1(\sigma)(k)=k$ and thus
the distance between $shift_1(\sigma)$ and $I$ is at most the distance between $\sigma$ and $I$,
yielding $shift_1(\sigma) \sim I = shift_1(\tau)$.

{\bf Case II:} $shift_1(\sigma) \neq \sigma$ and $shift_1(\tau) \neq \tau$.
Without loss of generality
$\tau = (1 2)$ and hence $shift_1(\tau) = I$.
As in the previous case, for every $k \in [n]$ if $\sigma(k)=k$ then also $shift_1(\sigma)(k)=k$.
We also note that $shift_1(\sigma)(1) = 1$ but $\sigma(1) \ne 1$ (since $shift_1(\sigma) \neq \sigma$).
Therefore $d(shift_1(\sigma),I)< d(\sigma,I)$.
If $d(\sigma,I)\le 4$
 then $shift_1(\sigma) \sim I = shift_1(\tau)$ and we are done.
Since $\sigma \sim \tau$, we have $d(\sigma,I)\le 5$  so we may assume that $d(\sigma,I)=5$. Note that if $\sigma(1)=j\ne 2$, then $\sigma$ and $\tau$ differ on 1,2 and $j$, and thus $\sigma(k)=k$ for all $k\not\in\{1,2,j\}$, so $d(\sigma,I) \le 3$, contrary to the assumption that this distance is $5$. Thus, we must have that $\sigma(1)=2$.
It follows that   $A:= \{i \in [n] : \sigma(i) \neq \tau(i)\}$
is a set of size $3$ disjoint from $\{1,2\}$. But then also  $\{i \in [n] : shift_1(\sigma(i) )\neq shift_1(\tau(i))\} = A$,
yielding $shift_1(\sigma) \sim shift_1(\tau)$.
\end{proof}

At this point we need a connectivity result. This is best formulated in  matrix language.

\begin{lemma}\label{largepermutationmatrices}
Let $A = (a_{ij})$ be an $n \times n$ 0-1 matrix and let $k \in [n-1]$. Let $G$ be the graph whose
vertices are the permutations $\sigma \in S_n$ satisfying $\sum_{i=1}^n a_{i \sigma(i)} \geq k$ and whose edges correspond to the $\sim$ relation. If there exists $\rho \in S_n$ with $\sum_{i=1}^n a_{i \rho(i)} > k$, then $G$ is connected.
\end{lemma}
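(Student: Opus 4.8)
The plan is to prove connectivity by exhibiting, for every $\sigma\in V(G)$, a path in $G$ from $\sigma$ to the fixed ``rich'' permutation $\rho$ (the one with $\sum_i a_{i\rho(i)}>k$). Since $\sim$-adjacency in $G$ only allows moving between permutations that agree on all but at most three coordinates, and since every permutation is a product of transpositions, the natural idea is to walk from $\sigma$ to $\rho$ by a sequence of transpositions; the issue is that an arbitrary transposition step may leave the vertex set (the weight $\sum_i a_{i\tau(i)}$ could drop below $k$ along the way). So the first step is to reduce to the case $\sum_i a_{i\sigma(i)}=k$ exactly: if $\sigma$ already has weight $>k$, I claim any single transposition keeps the weight $\ge k-1$, hmm — that's not quite enough, so instead I would argue that the set of permutations of weight $\ge k+1$ is itself ``transposition-connected without leaving weight $\ge k$'': moving between two weight-$(\ge k+1)$ permutations along a minimal sequence of transpositions, each intermediate permutation differs from the start in few enough coordinates. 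Actually the cleanest reduction: it suffices to show every $\sigma$ with weight exactly $k$ is connected to \emph{some} permutation of weight $k+1$, and that all permutations of weight $\ge k+1$ lie in one component. The latter follows because from weight $\ge k+1$ one can perform swaps that never drop below $k$.

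Second, the heart of the matter: given $\sigma$ with $\sum_i a_{i\sigma(i)}=k$, I must find a $\sim$-path from $\sigma$ to a permutation of strictly larger weight. Call position $i$ \emph{good} for $\tau$ if $a_{i\tau(i)}=1$. The $shift_i$ operators, and more generally composing $\sigma$ with a transposition $(i\,j)$, change the weight by $a_{i\sigma(j)}+a_{j\sigma(i)}-a_{i\sigma(i)}-a_{j\sigma(j)}\in\{-2,-1,0,1,2\}$. Since $\rho$ has weight $k+1>k$, there is at least one position where $\rho$ ``beats'' $\sigma$; I want to transfer one such gain to $\sigma$. Concretely: pick a position $i$ with $a_{i\rho(i)}=1$ but $a_{i\sigma(i)}=0$. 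Let $j=\sigma^{-1}(\rho(i))$. Apply the transposition $(i\,j)$ to $\sigma$. This sets the $i$-entry to $\rho(i)$, gaining $1$ at position $i$, at a cost of at most $1$ (the old value $a_{j\sigma(j)}$), so the new weight is $\ge k$, i.e. still in $G$, and the step is $\sim$-legal since it changes only two coordinates. If the new weight is $k+1$ we are done; if it is still $k$ (the swap lost $a_{j\sigma(j)}=1$ while gaining at $i$), repeat with the updated permutation, always choosing a position where it disagrees with $\rho$ in $a$-value. The key invariant to maintain is that the number of positions on which the current permutation agrees with $\rho$ strictly increases at each such step, so this terminates; and when no bad position remains, the permutation equals $\rho$ on all good-for-$\rho$ positions, hence has weight $\ge k+1$. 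One must be slightly careful that the ``repair'' step never \emph{decreases} the weight below $k$ — but it changes weight by $+1$ at $i$ minus at most $1$ elsewhere, so weight stays $\ge k$ throughout.

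Third, once every weight-$k$ vertex is joined to a weight-$(\ge k+1)$ vertex, it remains to connect all weight-$(\ge k+1)$ vertices to each other (and in particular to $\rho$). For two such permutations $\pi,\pi'$, write $\pi'=\pi\cdot t_1\cdots t_r$ with $t_\ell$ transpositions, $r$ minimal; the partial products $\pi,\pi t_1,\pi t_1 t_2,\dots$ need not all have weight $\ge k$, but one can instead route through $\rho$: it suffices to show each of $\pi,\pi'$ connects to $\rho$. For that, use the same repair argument as in step two but now starting from weight $k+1$: each transposition that moves the current permutation closer to $\rho$ (in the agreement-count sense) changes the weight by $+1$ minus at most $1$, so the weight stays $\ge k$, and we reach $\rho$. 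I expect the main obstacle to be exactly this bookkeeping — guaranteeing that the sequence of transpositions that ``aligns'' a permutation with $\rho$ can always be chosen so that no intermediate weight dips below $k$, and that each step genuinely changes only $\le 3$ coordinates (a transposition changes exactly $2$, so that part is automatic, which is why transpositions, rather than arbitrary $3$-cycles, are the right moves). The hypothesis $\rho$ having weight strictly greater than $k$ is what gives the one unit of slack that makes the whole argument go through; without it the weight-$k$ level set can indeed be disconnected.
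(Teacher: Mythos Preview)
Your overall strategy---walk from any $\sigma\in V(G)$ toward $\rho$ by transpositions that align one more coordinate with $\rho$ at each step, checking that the weight never dips below $k$---is exactly the paper's approach. (With $\rho=I$, your move ``pick $i$, let $j=\sigma^{-1}(\rho(i))$, transpose $(i\,j)$'' is literally the paper's $shift_i$.) The paper packages this as a single induction on the Hamming distance $d(\sigma,\rho)$, with a three-way case split on the current weight $\ell=\sum_i a_{i\sigma(i)}$: if $\ell\ge k+2$ any such transposition works (loss $\le 2$); if $\ell=k+1$ one picks $i$ with $a_{i\rho(i)}\ge a_{i\sigma(i)}$ (exists since $\sum a_{i\rho(i)}\ge k+1=\ell$), so the loss is at most $1$; if $\ell=k$ one picks $i$ with $a_{i\rho(i)}>a_{i\sigma(i)}$ (exists since $\sum a_{i\rho(i)}>k=\ell$), so the net change is $\ge 0$.

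Your part (a) is exactly the paper's $\ell=k$ case, and is fine. The gap is in your part (b): the assertion that ``each transposition that moves the current permutation closer to $\rho$ changes the weight by $+1$ minus at most $1$'' is not correct as stated. Your chosen $i$ must satisfy $a_{i\rho(i)}=1$ and $a_{i\sigma(i)}=0$ to guarantee the $+1$ gain, but once $\mathrm{weight}(\sigma)\ge \mathrm{weight}(\rho)$ such an $i$ need not exist (take $\sigma\ne\rho$ with $a_{i\sigma(i)}=a_{i\rho(i)}$ for all $i$). In that situation the change at position $i$ is $0$ and the change at $j$ can be $-1$, so the weight can drop. This is harmless if $\ell\ge k+2$, but when $\ell=k+1$ you need the paper's refinement: choose $i$ with $\sigma(i)\ne\rho(i)$ and $a_{i\rho(i)}\ge a_{i\sigma(i)}$, which exists by the counting argument above and keeps the weight $\ge k$. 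With that one-line fix your argument goes through and is the same proof as the paper's.
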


\begin{proof}
Without loss of generality $\rho=I$, meaning that $\sum_{i=1}^n a_{ii} > k$.
 We shall show that there is a path in $G$ from $\rho$ to  $\sigma$ for any $\sigma \in V(G) \setminus \{\rho\}$.
We prove this claim by induction on  $d(\sigma,\rho)$.
Write $\ell = \sum_{i=1}^n a_{i \sigma(i)} $.
Our aim  is to find distinct $j \in [n]$ for which $\sigma(j) \neq j$ and
$\sigma' = shift_{j}(\sigma) \in V(G)$. Then the induction hypothesis can be applied since $\sigma \sim \sigma'$ and $\sigma'$ is closer to $\rho$ than $\sigma$.\\

If $\ell \geq k+2$  choose any $j \in [n]$ with $\sigma(j) \neq j$. Then we have
$\sum_{i=1}^n a_{i \sigma'(i)} \geq \sum_{i=1}^n a_{i \sigma(i)} - 2 \geq k$, so $\sigma' \in V(G)$.

Suppose next that $\ell = k+1$. By the assumption that $\sum_{i=1}^n a_{ii} > k$ we have $\sum_{i=1}^n a_{i \sigma(i)} \leq \sum_{i=1}^n a_{i i}$ and since $\sigma \neq \rho$
there must be some $j \in [n]$ for which $\sigma(j) \neq j$ and $a_{j j} \geq a_{j \sigma(j)}$.
Taking $\sigma' = shift_{j}(\sigma) \in V(G)$ yields
$\sum_{i=1}^n a_{i \sigma'(i)} \geq \sum_{i=1}^n a_{i \sigma(i)} - 1 = k$, so $\sigma' \in V(G)$.

Finally, if $\ell = k$ then $\sum_{i=1}^n a_{i \sigma(i)} < \sum_{i=1}^n a_{i i}$ and hence
there must be some $j \in [n]$ for which $a_{j j} > a_{j \sigma(j)}$.
Taking $\sigma' = shift_{j}(\sigma) \in V(G)$ we get
$\sum_{i=1}^n a_{i \sigma'(i)} \geq \sum_{i=1}^n a_{i \sigma(i)} + 1 - 1= k $, so $\sigma' \in V(G)$.
\end{proof}

\begin{corollary}
\label{path}
Let $A = (a_{ij})$ be an $n \times n$ 0-1 matrix and let $k \in [n]$. Let $G$ be the graph whose
vertices are the permutations $\sigma \in S_n$ with $\sum_{i=1}^n a_{i \sigma(i)} \geq k$ and whose edges correspond to the $\sim$ relation. If $\sum_{i, j \le n}a_{ij} \geq kn$ then $G$ is connected.
\end{corollary}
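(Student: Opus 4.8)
The plan is to reduce the statement to Lemma \ref{largepermutationmatrices} by a case split according to whether the matrix $A$ has any ``slack''. First I would consider the case in which there is a permutation $\rho \in S_n$ with $\sum_{i=1}^n a_{i\rho(i)} > k$. Since the diagonal sum of any permutation is at most $n$, such a $\rho$ can only exist when $k \le n-1$, so the hypotheses of Lemma \ref{largepermutationmatrices} are met verbatim, and that lemma immediately gives that $G$ is connected. Note that in this case the hypothesis $\sum_{i,j \le n} a_{ij} \ge kn$ is not even needed.

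The complementary case is that $\sum_{i=1}^n a_{i\sigma(i)} \le k$ for \emph{every} $\sigma \in S_n$. Here I would run an averaging argument over all permutations: each entry $a_{ij}$ lies on the generalized diagonal of exactly $(n-1)!$ permutations, so
\[
\sum_{\sigma \in S_n}\ \sum_{i=1}^n a_{i\sigma(i)} \;=\; (n-1)!\sum_{i,j \le n} a_{ij} \;\ge\; (n-1)!\,kn \;=\; k\cdot n!.
\]
Together with the case assumption $\sum_{i=1}^n a_{i\sigma(i)} \le k$ for all $\sigma$, this forces equality throughout, so in fact $\sum_{i=1}^n a_{i\sigma(i)} = k$ for every $\sigma \in S_n$. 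In particular every permutation is a vertex of $G$, i.e. $V(G) = S_n$.

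It then remains to check that $G$ is connected as soon as $V(G) = S_n$. Given $\sigma, \tau \in S_n$, write $\sigma^{-1}\tau$ as a product of transpositions, which produces a chain $\sigma = \pi_0, \pi_1, \dots, \pi_t = \tau$ in which each $\pi_{r+1}$ is obtained from $\pi_r$ by composing with a transposition; then $d(\pi_r,\pi_{r+1}) = 2 \le 3$, so $\pi_r \sim \pi_{r+1}$, and every $\pi_r$ lies in $V(G) = S_n$. Thus this chain is a path in $G$, proving connectivity and finishing the proof.

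I do not expect a genuine obstacle in this argument; the only points that require a moment of care are observing that the ``slack'' permutation $\rho$ forces $k \le n-1$ (so Lemma \ref{largepermutationmatrices} is applicable, and in particular the degenerate value $k=n$ automatically falls into the second case), and noting that in the second case the counting inequality is forced to be tight, which is precisely what delivers $V(G) = S_n$.
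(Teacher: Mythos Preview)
Your proof is correct and follows the same case split as the paper's: reduce to Lemma~\ref{largepermutationmatrices} when some permutation achieves a diagonal sum strictly above $k$, and handle the complementary ``tight'' case separately. Your observation that the existence of such a $\rho$ forces $k\le n-1$, so that Lemma~\ref{largepermutationmatrices} genuinely applies, is a nice point that the paper leaves implicit.

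The genuine difference is in how the tight case is handled. The paper invokes K\"onig's theorem: if no permutation exceeds $k$, then the $1$'s of $A$ can be covered by $k$ lines, and combining this with $\sum a_{ij}\ge kn$ pins down the structure of $A$ exactly (either $k$ all-one rows and the rest all-zero, or the transposed picture), from which $V(G)=S_n$ follows. Your double-counting argument over all of $S_n$ bypasses K\"onig entirely and reaches $V(G)=S_n$ directly, without ever describing $A$. This is more elementary and shorter for the purpose at hand; the paper's route, on the other hand, yields the extra structural information about $A$ in the tight case, which is not needed for the corollary but is of some independent interest.
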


\begin{proof}
If there exists a permutation $\rho$ with $\sum_{i=1}^n a_{i \rho(i)} > k$ then we are done by Lemma \ref{largepermutationmatrices}. If not, by K\"onig's theorem there exist sets
$A, B \subseteq [n]$ with $|A|+|B|\le k$ such that $a_{ij}=0$ for  $i \not \in A$ and $j \not \in B$. This is compatible with the condition $\sum_{i, j \le n}a_{ij} \geq kn$  only if $|A|=0$ and $|B|=k$ or $|B|=0$ and $|A|=k$, and
$a_{ij}=1$ for all $(i,j) \in A\times [n] \cup [n]\times B$. In both cases  $V(G)=S_n$, implying that the relation $\sim$ is path connected since every permutation is reachable from every other permutation by a sequence of transpositions.
\end{proof}

 In the next two lemmas let $i\in[n]$ and $\sigma,~\tau\in S_n$. We write $shift$ for $shift_i$.

\begin{lemma}
If $d(\sigma, \tau)=2$, then the 4-cycle $\sigma-\tau-shift(\tau)-shift(\sigma)-\sigma$ is null-homotopic in ${\mathcal C}$
(i.e., it can be triangulated.)
\end{lemma}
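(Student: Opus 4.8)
The statement to prove is that when $d(\sigma,\tau)=2$, the $4$-cycle
$\sigma-\tau-shift(\tau)-shift(\sigma)-\sigma$ is null-homotopic in $\mathcal C$, i.e. it bounds a triangulated disk all of whose edges are $\sim$-edges. The natural plan is to show it can be triangulated by a single diagonal: it suffices to exhibit one of the two diagonals whose endpoints are $\sim$-related, since then the $4$-cycle splits into two triangles of $\mathcal C$. So the goal reduces to proving that at least one of $d(\sigma, shift(\tau)) \le 3$ or $d(\tau, shift(\sigma)) \le 3$ holds (and of course the four edges of the cycle are $\sim$-edges, which for three of them is immediate — $\sigma\sim\tau$ since $d=2$, and $shift(\sigma)\sim shift(\tau)$ by the previous lemma — while $\sigma\sim shift(\sigma)$ and $\tau\sim shift(\tau)$ follow because $shift_i$ changes a permutation in at most $2$ coordinates).

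\textbf{Key steps.}
First I would recall the basic effect of $shift = shift_i$ on the Hamming metric: if $\sigma(i)\ne i$, then $shift(\sigma)$ agrees with $\sigma$ everywhere except at the two positions $i$ and $\sigma^{-1}(i)$, so $d(\sigma, shift(\sigma)) \le 2$ always, and $shift(\sigma)(i) = i$. If $\sigma(i) = i$ then $shift(\sigma) = \sigma$. Next I would split into the trivial and the substantive cases. If $shift(\sigma) = \sigma$ or $shift(\tau) = \tau$, the $4$-cycle degenerates to a triangle or an edge and is vacuously triangulable. So assume $\sigma(i)\ne i$ and $\tau(i)\ne i$. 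Write $D = \{j : \sigma(j)\ne\tau(j)\}$, so $|D| = 2$; let $D = \{p,q\}$. Now I would case on how the support of the $shift$ operation (the positions $i$ and $\sigma^{-1}(i)$, $\tau^{-1}(i)$) interacts with $D$. The clean case is when $i \notin D$ and $\sigma^{-1}(i) = \tau^{-1}(i)$ (which happens precisely when the coordinate carrying the value $i$ is outside $D$): then $shift$ modifies $\sigma$ and $\tau$ in the same two coordinates $\{i, \sigma^{-1}(i)\}$ and in the same way (both get value $i$ at position $i$ and value $\sigma(i)=\tau(i)$ at the old preimage of $i$), so $\{j : shift(\sigma)(j) \ne shift(\tau)(j)\} = D$ still, whence $d(shift(\sigma), shift(\tau)) = 2$ and in fact $d(\sigma, shift(\tau)) \le d(\sigma, shift(\sigma)) + d(shift(\sigma), shift(\tau)) \le 2 + 2$ — this crude bound is not quite enough, so one must instead compute $d(\sigma, shift(\tau))$ directly: $\sigma$ and $shift(\tau)$ differ only at positions in $D \cup \{i, \tau^{-1}(i)\}$, and a direct check of these at most $3$ positions (using $\sigma = \tau$ off $D$) gives $\le 3$. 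The remaining cases are when $i \in D$, or when $\sigma^{-1}(i) \ne \tau^{-1}(i)$ (forcing at least one of $p,q$ to be $\sigma^{-1}(i)$ or $\tau^{-1}(i)$); in each, the relevant coordinate sets $\{i,\sigma^{-1}(i)\}$, $\{i,\tau^{-1}(i)\}$ and $D$ together involve only a bounded number (at most $3$ or $4$) of positions, and I would simply enumerate which permutation among $\sigma, \tau$ picks up value $i$ at position $i$ to see that one of $d(\sigma, shift(\tau))$, $d(\tau, shift(\sigma))$ drops to $\le 3$. Throughout, the key leverage is that $\sigma$ and $\tau$ coincide outside a $2$-set, so any quantity only ever ``sees'' a handful of coordinates.

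\textbf{Main obstacle.}
The real work — and the only place where something could go wrong — is the bookkeeping in the case where the value $i$ sits at one of the two differing coordinates, i.e. $i\in D$ or $\sigma^{-1}(i)\in D$ or $\tau^{-1}(i)\in D$. There $shift_i$ acts on $\sigma$ and on $\tau$ through \emph{different} coordinate pairs, and one has to verify carefully that the chosen diagonal genuinely has Hamming length $\le 3$ rather than $4$; the identity $d = 2$ gives exactly enough slack, but only if one picks the correct diagonal, and which one is correct depends on the subcase. I expect this to require a short but slightly fiddly case split (three or four subcases), each resolved by writing out the at-most-four relevant coordinates explicitly; no deep idea is needed, just care. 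Once a valid diagonal is identified in every case, the $4$-cycle is split into two $\mathcal C$-triangles and hence is null-homotopic in $\mathcal C$, completing the proof.
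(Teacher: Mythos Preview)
Your plan has a genuine gap: it is \emph{not} always possible to triangulate the $4$-cycle with a single diagonal. Your ``clean case'' (where $i\notin D$ and $\sigma^{-1}(i)=\tau^{-1}(i)$) is in fact the bad one. Take $i=1$, $\sigma=(1\,2)$, $\tau=(1\,2)(3\,4)$; then $D=\{3,4\}$, $\sigma^{-1}(1)=\tau^{-1}(1)=2$, so this is exactly your clean case. But $shift(\sigma)=I$ and $shift(\tau)=(3\,4)$, and
\[
d(\sigma,\,shift(\tau))=d((1\,2),(3\,4))=4,\qquad d(\tau,\,shift(\sigma))=d((1\,2)(3\,4),I)=4,
\]
so neither diagonal is a $\sim$-edge. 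Your counting error is in the sentence ``these at most $3$ positions'': the set $D\cup\{i,\tau^{-1}(i)\}$ has four elements here, not three, since $D$ and $\{i,\tau^{-1}(i)\}$ are disjoint in the clean case.

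The paper's proof proceeds exactly as you do up to this point: if either diagonal is a $\sim$-edge, done. But in the remaining case it reduces (without loss of generality) to precisely the configuration above and then triangulates the square $(1\,2)-(1\,2)(3\,4)-(3\,4)-I$ by inserting extra interior vertices (their Figure~10). So the missing idea in your plan is that one genuinely needs an auxiliary vertex (or two) in $\mathcal C$ adjacent to three of the four corners; a single diagonal does not suffice.
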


\begin{proof}
If either $\sigma \sim shift(\tau)$ or $\tau \sim shift(\sigma)$ then we are done.
So, we may assume this does not happen and in particular $\sigma \neq shift(\sigma)$
and $\tau \neq shift(\tau)$. We may assume, without loss of generality, that $i=1$,
$\sigma = (1 2)$, $\tau = (1 2)(3 4)$, $shift(\sigma) = I$ and $shift(\tau) = (3 4)$.
We can now fill the cycle as in Figure~\ref{fig10}.
\end{proof}

\begin{figure}[h!]
\begin{center}
\label{simpleC4}
\includegraphics[scale=0.3]{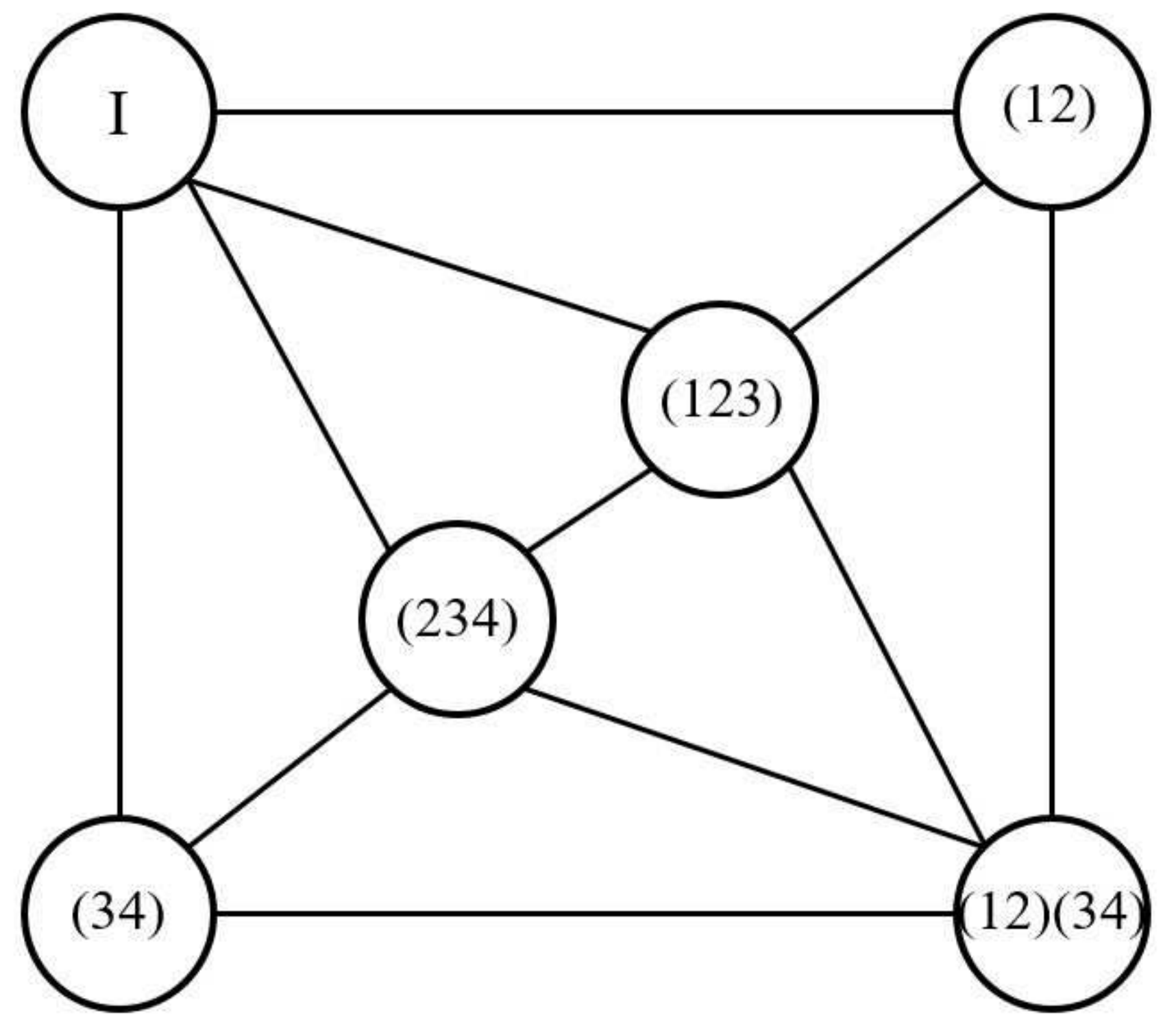}
\end{center}
\caption{}
\label{fig10}
\end{figure}

\begin{lemma}
If  $d(\sigma, \tau) = 3$
 then the 4-cycle $\sigma-\tau-shift(\tau)-shift(\sigma)-\sigma$ is a null cycle in ${\mathcal C}$.
\end{lemma}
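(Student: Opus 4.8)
The plan is to follow the strategy of the preceding lemma (the $d(\sigma,\tau)=2$ case), reducing to a normal form by conjugation and then exhibiting an explicit triangulation. First I would dispose of the easy cases: if either $shift(\sigma)=\sigma$ or $shift(\tau)=\tau$, the four-cycle degenerates to a triangle or an edge and is trivially null-homotopic; and if either diagonal $\sigma\sim shift(\tau)$ or $\tau\sim shift(\sigma)$ holds, the square splits into two triangles in $\cc$ and we are done. So I would assume $\sigma\neq shift(\sigma)$, $\tau\neq shift(\tau)$, $\sigma\not\sim shift(\tau)$ and $\tau\not\sim shift(\tau)$. Since conjugating all four permutations by a fixed element of $S_n$ fixing $i$ preserves both the $\sim$-relation and commutes with $shift_i$, I may take $i=1$ and put $\sigma$ in a convenient form.

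Next I would analyze what $shift_1$ does. The hypothesis $\sigma\neq shift_1(\sigma)$ forces $\sigma(1)\neq 1$; write $\sigma(1)=a$ and $\sigma^{-1}(1)=a'$. If $a=a'$ (i.e. $\sigma$ has the transposition $(1\,a)$ as a cycle), then $shift_1(\sigma)$ just deletes that transposition and $d(\sigma,shift_1(\sigma))=2$; if $a\neq a'$ then $\sigma$ has a cycle $\cdots\to a'\to 1\to a\to\cdots$ and $shift_1$ replaces it by the shorter cycle $\cdots\to a'\to a\to\cdots$, again changing exactly the two coordinates $1$ and $a'$, so $d(\sigma,shift_1(\sigma))=2$ always (this matches the Remark and the earlier computations). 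The same holds for $\tau$. Now $d(\sigma,\tau)=3$, and the failure of both diagonals is a strong constraint: $d(\sigma,shift(\tau))\ge 4$ and $d(\tau,shift(\sigma))\ge 4$, while $d(\sigma,shift(\sigma))=d(\tau,shift(\tau))=2$ and $d(\sigma,\tau)=3$ together with the triangle inequality give $d(shift(\sigma),shift(\tau))\le 7$ but also $\ge 3-2-2$, so the real content is locating where the supports of the various differences sit. I expect that after conjugation one can assume $shift(\sigma)=I$, so $\sigma=(1\,2)$ (the $a=a'$ case) or $\sigma$ is a $3$-cycle $(1\,2\,3)$-type element whose square-free part is controlled; then $\tau$ differs from $\sigma$ in exactly $3$ places, and the constraint $\tau\not\sim shift(\sigma)=I$ means $d(\tau,I)\ge 4$, which pins down $\tau$ up to finitely many normal forms (essentially $\tau$ must move $1$ together with two or three further points disjoint from $\{2\}$, giving a concrete small permutation like $(1\,4)(3\,5)$ or $(1\,3\,4)(5)$ adjusted so its distance to $I$ is $4$ or $5$). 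In each such normal form I would then write down the eight-or-so auxiliary vertices and edges filling the square, exactly as in Figure~\ref{fig10}, checking that every new edge has Hamming distance $\le 3$.

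The main obstacle will be the bookkeeping in this last step: with $d(\sigma,\tau)=3$ (rather than $2$) the combined support of $\sigma$, $\tau$, $shift(\sigma)$, $shift(\tau)$ can be as large as six or seven points, so the "hole" to be triangulated is genuinely bigger than in the previous lemma and the naive single-hexagon fill need not consist of $\sim$-edges only. I would handle this by inserting an intermediate permutation $\mu$ with $d(\sigma,\mu)\le 3$, $d(\tau,\mu)\le 3$ that also behaves well under $shift$ — for instance a permutation agreeing with $\sigma$ on the coordinates where $shift$ acts and with $\tau$ elsewhere — so that the original square is subdivided into two squares each of which is a $d(\cdot,\cdot)=2$ instance, and the previous lemma applies to each. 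Verifying that such a $\mu$ exists and that $shift(\mu)$ is $\sim$-related to both $shift(\sigma)$ and $shift(\tau)$ is the crux; once that is done, gluing the two triangulated squares along the common edge $\mu-shift(\mu)$ gives the desired null-homotopy, completing the proof.
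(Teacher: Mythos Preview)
Your final paragraph is exactly the paper's proof: pick an intermediate permutation $\rho$ with $d(\sigma,\rho)=d(\tau,\rho)=2$, apply the $d=2$ lemma to each of the two squares $\sigma-\rho-shift(\rho)-shift(\sigma)$ and $\rho-\tau-shift(\tau)-shift(\rho)$, and glue along $\rho-shift(\rho)$; the remaining triangles $\sigma\rho\tau$ and $shift(\sigma)\,shift(\rho)\,shift(\tau)$ are simplices of $\cc$ since all pairwise distances are at most $3$.

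Everything preceding that paragraph is unnecessary, and in places misleading. You do not need to dispose of degenerate cases, normalize by conjugation, or analyze the support of $shift$. The existence of $\rho$ is immediate and does not depend on $i$ or on $shift$ at all: since $d(\sigma,\tau)=3$, the two permutations agree outside a $3$-set $\{a,b,c\}$ and restrict to two permutations of that $3$-set (onto the same image) differing in all three places; any transposition applied to $\sigma$ inside $\{a,b,c\}$ gives a $\rho$ with $d(\sigma,\rho)=2$, and a quick check in $S_3$ shows each such $\rho$ also satisfies $d(\tau,\rho)=2$. Your suggested construction of $\mu$ (``agreeing with $\sigma$ where $shift$ acts and with $\tau$ elsewhere'') is neither needed nor obviously well-defined. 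Finally, the ``crux'' you identify --- that $shift(\rho)\sim shift(\sigma)$ and $shift(\rho)\sim shift(\tau)$ --- is not a crux: it is the content of the earlier lemma that $shift_i$ preserves $\sim$, applied to the pairs $(\sigma,\rho)$ and $(\rho,\tau)$.
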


\begin{proof}
Let $\rho \in S_n$ have distance 2 from both $\sigma$ and $\tau$. Denote $\sigma'=shift(\sigma)$, $\tau'=shift(\tau)$ and $\rho'=shift(\rho)$. We use the previous lemma to fill the cycle as in Figure~\ref{fig11}.
\end{proof}

\begin{figure}[h!]
\begin{center}
\label{dist3C4}
\includegraphics[scale=0.3]{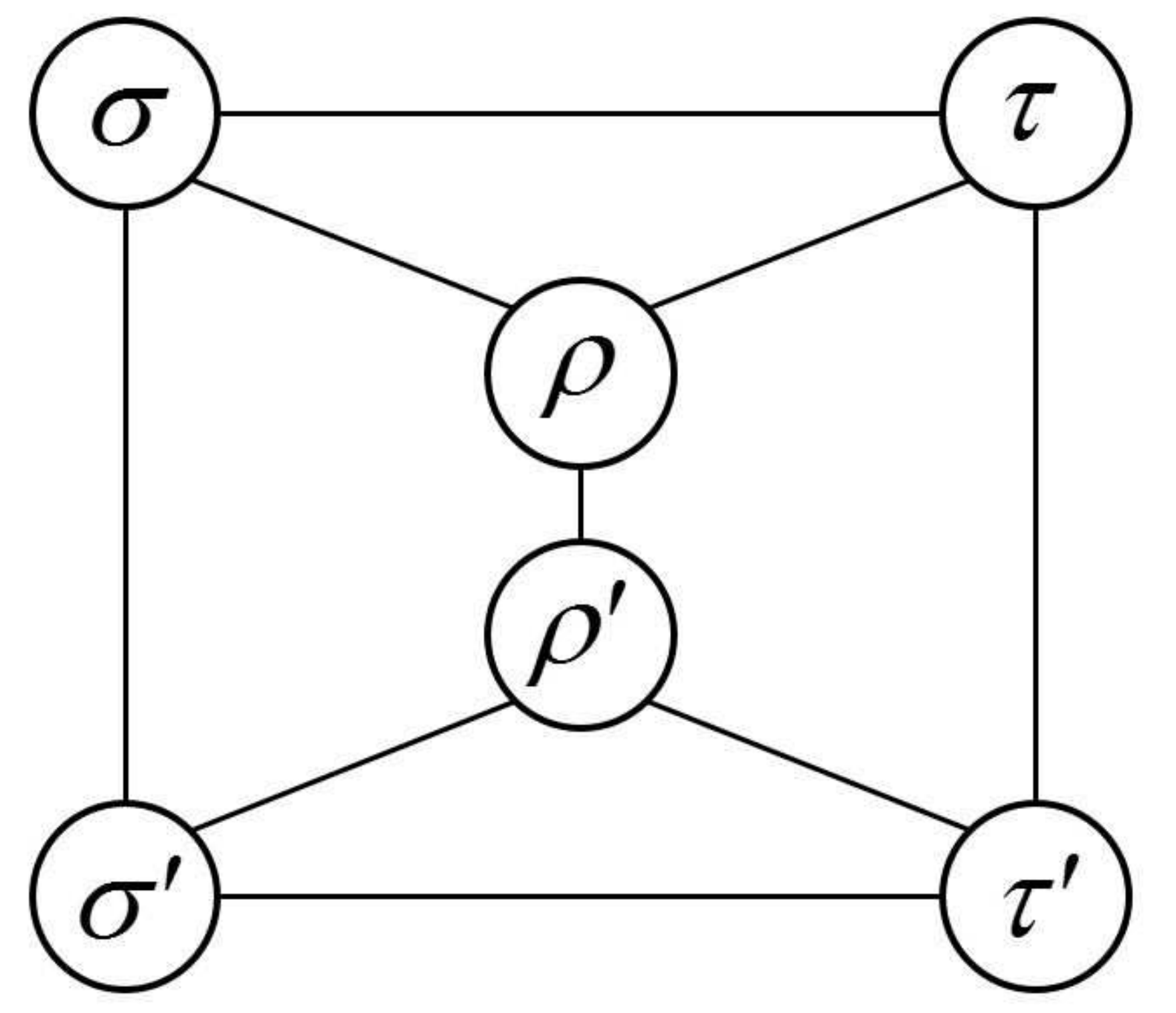}
\end{center}
\caption{}
\label{fig11}
\end{figure}

As a corollary from the above two lemmas we get

\begin{corollary}\label{cor:simplicial:map}
Let $C$ be a cycle  and let $f : C \to \cc$ be a simplicial map, i.e., mapping each edge to an edge or a vertex.
Let $\bar{f}: C \to \cc$ be defined by $\bar{f}(v) = shift_i(f(v))$ for every $v \in V(C)$.
Then $\bar{f}$ is also simplicial and is homotopic to $f$.
\end{corollary}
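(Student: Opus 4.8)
The statement to prove is Corollary~\ref{cor:simplicial:map}: given a simplicial map $f\colon C \to \cc$ from a cycle $C$, the map $\bar f(v) = shift_i(f(v))$ is simplicial and homotopic to $f$.

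\medskip

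The plan is to reduce everything to the two preceding lemmas, which handle the ``elementary'' case where $C$ is a $4$-cycle and $f$ is an embedding realizing an edge $\sigma\tau$ together with its shift $shift_i(\sigma)\,shift_i(\tau)$. First I would check that $\bar f$ is simplicial: if $vw$ is an edge of $C$, then $f(v) \sim f(w)$ (or they coincide), and by the Lemma stating ``if $\sigma \sim \tau$ then $shift_i(\sigma) \sim shift_i(\tau)$'' we get $\bar f(v) = shift_i(f(v)) \sim shift_i(f(w)) = \bar f(w)$ (with the degenerate case $f(v)=f(w)$ giving $\bar f(v)=\bar f(w)$ immediately). So $\bar f$ maps edges to edges or vertices, hence is simplicial.

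\medskip

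For the homotopy, I would build an explicit simplicial homotopy by working edge by edge along $C$. Think of the homotopy as a triangulated annulus (or cylinder) $C \times [0,1]$ whose inner boundary carries $f$ and whose outer boundary carries $\bar f$. It suffices to triangulate each ``quadrilateral'' $\{v,w\}\times[0,1]$ — with corners $f(v), f(w), \bar f(w), \bar f(v)$ — consistently with the triangulations on neighbouring quadrilaterals (which only share the vertical edges $\{v\}\times[0,1]$, carrying the path $f(v)$–$\bar f(v)$, i.e.\ the single edge $f(v)\,shift_i(f(v))$ when they differ, or a point when $shift_i(f(v))=f(v)$). For each edge $vw$ of $C$ there are three cases according to whether $f(v)=f(w)$, $d(f(v),f(w))=$ well, $d\le 1$ is impossible for distinct permutations in $S_n$ unless... actually $d(f(v),f(w))\in\{0,2,3\}$ since two distinct permutations differ in at least $2$ positions. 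The case $d=0$ is trivial (the quadrilateral degenerates to the single vertical edge). The cases $d(f(v),f(w))=2$ and $d(f(v),f(w))=3$ are exactly the content of the two lemmas immediately preceding the corollary: each provides a triangulation of the $4$-cycle $f(v)\,f(w)\,shift_i(f(w))\,shift_i(f(v))\,f(v)$ inside $\cc$. I would glue these triangulated quadrilaterals together around $C$; since adjacent ones agree on their shared vertical edge (both are the edge, or segment, from $f(v)$ to $shift_i(f(v))$, which the lemmas leave undivided), the result is a well-defined simplicial subdivision of the annulus mapping into $\cc$, exhibiting $f \simeq \bar f$.

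\medskip

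The main obstacle is purely bookkeeping: ensuring the triangulations from the two lemmas can be glued without conflict along the vertical edges, i.e.\ that neither lemma subdivides the edges $f(v)\,shift_i(f(v))$ on the sides of its $4$-cycle. Inspecting Figures~\ref{fig10} and~\ref{fig11}, the fillings introduce new vertices and edges only in the interior, leaving the boundary $4$-cycle's edges intact, so the gluing is automatic. One should also note that when $f(v)=f(w)$ but $f$ is non-degenerate on a neighbouring edge, the ``quadrilateral'' collapses to the vertical edge, which is consistent with any triangulation glued on the other side. With these checks, the corollary follows.
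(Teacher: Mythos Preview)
Your proposal is correct and follows exactly the approach the paper intends: the paper's entire proof is a reference to Figure~\ref{fig12}, which depicts precisely the annulus you describe, with each quadrilateral $f(v)\,f(w)\,shift_i(f(w))\,shift_i(f(v))$ filled in using the two preceding lemmas. Your write-up is in fact more careful than the paper's, since you explicitly verify that $\bar f$ is simplicial and that the boundary edges of the $4$-cycles are not subdivided, so the gluing is consistent.
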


See Figure~\ref{fig12}. As above, $f(\sigma)$ is denoted by $\sigma'$.

\begin{lemma}
\label{simplyconnected}
The  simplicial complex ${\mathcal C}$  is simply connected.
\end{lemma}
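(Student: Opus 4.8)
The plan is to show that any loop in $\cc$ bounds a disk, i.e., any simplicial map $f\colon C\to\cc$ from a cycle $C$ extends to a simplicial map from a triangulated disk. Since $\cc$ is connected (the $\sim$-graph on $S_n$ is connected, as every permutation is reachable from any other by transpositions, and $d(\sigma,\sigma\circ(ij))\le 2$), it suffices to kill the fundamental group. I would proceed by a ``straightening'' argument: given a loop $f\colon C\to\cc$, I want to homotope it, using the $shift_i$ operations supplied by Corollary~\ref{cor:simplicial:map}, to a loop that lands entirely inside a contractible (in fact, star-shaped) subcomplex of $\cc$, and hence is null-homotopic.

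The key mechanism is the following. For a permutation $\sigma$, define a potential, e.g. the number of non-fixed points, or more usefully $\mathrm{supp}(\sigma)=\{i:\sigma(i)\ne i\}$. Applying $shift_i$ with $i$ a non-fixed point of $\sigma$ strictly shrinks $\mathrm{supp}(\sigma)$ (by the very definition of $shift_i$, since $i$ becomes a fixed point and no new non-fixed points are created — this is exactly the observation used in Case~I of the proof that $shift_i$ respects $\sim$, and in Lemma~\ref{largepermutationmatrices}). So, given the loop $f$, pick any vertex value $\sigma=f(v)$ that is not the identity, pick $i\in\mathrm{supp}(\sigma)$, and replace $f$ by $\bar f(w)=shift_i(f(w))$ for all $w$; by Corollary~\ref{cor:simplicial:map} this $\bar f$ is simplicial and homotopic to $f$. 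The total potential $\sum_{w\in V(C)}|\mathrm{supp}(f(w))|$ does not increase (every individual value either shrinks or stays the same under $shift_i$, by Case~I reasoning applied pointwise: if $\sigma(i)=i$ then $shift_i$ fixes it, otherwise it shrinks its support), and it strictly decreases at the vertex $v$ we chose. Iterating, after finitely many steps we reach a loop homotopic to $f$ all of whose vertices are mapped to the identity permutation $I$ — a constant map, which is null-homotopic. Hence $\pi_1(\cc)=1$, and together with connectedness this gives simple connectedness.

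The main obstacle — and the step that needs care — is checking that applying $shift_i$ simultaneously at all vertices of the loop does not \emph{increase} the total potential; i.e. that $shift_i$ never enlarges $\mathrm{supp}$. This is true but must be verified: writing $\sigma(i)=j$, the map $shift_i$ sets position $i$ to $i$ (removing $i$ from the support if $j\ne i$) and moves the preimage of $i$ to $j$; one checks that a position $k\notin\{i,\sigma^{-1}(i)\}$ with $\sigma(k)=k$ stays fixed, and the only position whose fixed/non-fixed status can flip the ``wrong'' way is $k=\sigma^{-1}(i)$, which is already non-fixed when $j\ne i$, so no new non-fixed point is created. A second point requiring attention is that at each stage we must be able to \emph{choose} an index $i$ that actually decreases the potential of at least one still-nontrivial vertex while not increasing any other; the argument above shows any $i\in\mathrm{supp}(\sigma)$ for a non-identity value $\sigma=f(v)$ works. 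One should also note the homotopies produced are within $\cc$ (not merely in some larger complex), which is precisely the content of Corollary~\ref{cor:simplicial:map} and the two preceding lemmas on filling $4$-cycles $\sigma-\tau-shift(\tau)-shift(\sigma)-\sigma$. Assembling these, the loop is contracted to a point inside $\cc$, proving Lemma~\ref{simplyconnected}.
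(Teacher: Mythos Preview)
Your proposal is correct and follows essentially the same approach as the paper: contract an arbitrary loop to the constant map at $I$ by repeatedly applying Corollary~\ref{cor:simplicial:map}. The paper's execution is slightly cleaner --- instead of an adaptive potential argument, it simply applies $shift_1, shift_2, \ldots, shift_n$ in order and observes (by the same support-shrinking reasoning you give) that the composite sends every permutation to $I$, so $n$ applications suffice.
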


\begin{proof}

Let $C$ be a cycle and let $f_0 : C \to \cc$ be a simplicial map.
We need to show that $f_0$ is null-homotopic.
For each $i \in [n]$, we define $f_i : C \to \cc$ by
$f_i(v) = shift_i(f_{i-1}(v))$ for every $v \in V(C)$.
Then by  Corollary~\ref{cor:simplicial:map} $f_0, \ldots, f_n$ are all homotopic to each other.
But $f_n(v) = I$ for every $v \in V(C)$.
This means that  $f_0, \ldots, f_n$ are all null-homotopic.


\begin{figure}[h!]
\begin{center}
\label{shell}
\includegraphics[scale=.3]{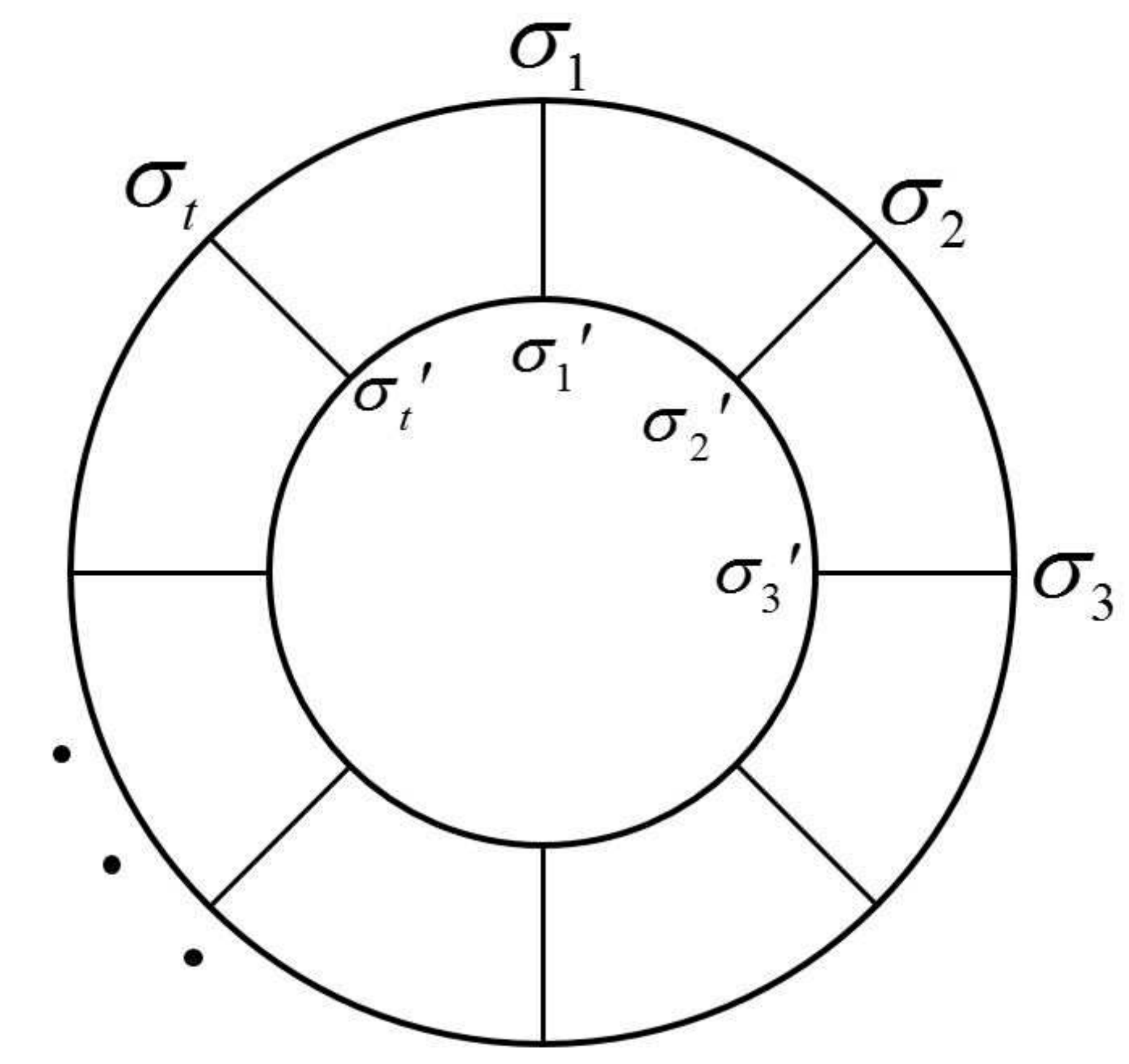}
\end{center}
\caption{}
\label{fig12}
\end{figure}

\end{proof}

\subsection{Associating a complex with the graph}

\begin{lemma}\label{lopsided3}
Let the set $E$ of edges of $K_{n, n}$ be partitioned to three sets
$E = E_1 \dot{\cup} E_2 \dot{\cup} E_3$.
Then there exists a perfect matching $M$ with at least $\ceil*{ \frac{|E_1|}{n} }$
edges of $E_1$ and at most $\ceil*{ \frac{|E_3|}{n} }$ edges of $E_3$.
\end{lemma}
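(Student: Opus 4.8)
The plan is to reduce the statement to the connectivity corollary already proved (Corollary \ref{path}), by a clever choice of the weight matrix $A$. First I would clear the trivial cases: if $E_1 = \emptyset$ take any perfect matching, since the empty matching already has $\ge \ceil{|E_1|/n}=0$ edges in $E_1$, and a generic matching can be arranged to have few $E_3$-edges — actually this needs care, so better to handle it as part of the main argument. Assume then $|E_i|=k_i n$ after the same normalization used just before Theorem \ref{equirep=3} (split each $E_i$ into equal-size classes by contracting / relabelling, exactly as in the paragraph following the statement of that theorem), so that $\ceil{|E_i|/n}=k_i$.

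The key step: let $A$ be the $0$-$1$ matrix with $a_{ij}=1$ iff edge $(i,j)\in E_1\cup E_2$, i.e. the complement (within $K_{n,n}$) of the indicator of $E_3$. Then for a permutation $\sigma$, $\sum_i a_{i\sigma(i)} = n - |M_\sigma \cap E_3|$ where $M_\sigma$ is the matching corresponding to $\sigma$. Set $k = n - k_3$. We have $\sum_{i,j} a_{ij} = |E_1|+|E_2| = (k_1+k_2)n = (n-k_3)n = kn$, so Corollary \ref{path} applies and the graph $G$ on permutations with $|M_\sigma\cap E_3|\le k_3$, edges given by $\sim$, is connected (and nonempty — it contains a fair representative of $E_3$, or one sees directly $V(G)\ne\emptyset$). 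Symmetrically, there exists a perfect matching $M_1$ with $|M_1\cap E_1|\ge k_1$ (a fair representative of $E_1$), and such a matching lies in $V(G)$ only if it also satisfies $|M_1\cap E_3|\le k_3$ — which need not hold, so I cannot just use $M_1$ directly.

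The real argument is a mean-value / discrete intermediate-value step along a path in $G$. Start from $M_1$, a perfect matching with $|M_1\cap E_1|\ge k_1$; it has $|M_1\cap E_3|\le n-k_1$, and I would first push it inside $V(G)$. More cleanly: apply Corollary \ref{path} twice. Let $G_3$ be the $\sim$-graph on matchings $M$ with $|M\cap E_3|\le k_3$; by the above it is connected. Pick in $G_3$ the matching $M^\ast$ maximizing $|M^\ast\cap E_1|$. If $|M^\ast\cap E_1|\ge k_1$ we are done. Otherwise I claim we reach a contradiction: take a fair representative $M_1$ of $E_1$ (so $|M_1\cap E_1|\ge k_1 > |M^\ast\cap E_1|$), build a path $M^\ast = N_0 \sim N_1 \sim \cdots \sim N_t = M_1$ in the full $\sim$-graph on $S_n$ (path-connected via transpositions), and walk along it. Each step changes $|N\cap E_3|$ by at most $3$ and $|N\cap E_1|$ by at most $3$; I would instead use single-transposition steps so each quantity changes by at most $2$, and refine further. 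The point I must extract: as $|N\cap E_1|$ climbs from below $k_1$ to $\ge k_1$, consider the first index where $|N\cap E_1|\ge k_1$; if at that moment $|N\cap E_3|\le k_3$ we contradict maximality of $M^\ast$ after noting connectivity lets us also keep the $E_3$ bound. The honest obstacle — and what I'd expect to be the main difficulty — is that monotone control of both $|N\cap E_1|$ and $|N\cap E_3|$ along one path is not automatic; the correct device is exactly the connectivity of $G_3$: since $G_3$ is connected, $M^\ast$ and the $E_3$-fair matching $F_3$ are joined \emph{inside} $G_3$, and one runs the "$shift$"-type exchange moves used in Lemma \ref{largepermutationmatrices}, which decrease $|N\cap E_3|$ or keep it $\le k_3$ while being free to increase $|N\cap E_1|$ — replaying that lemma's induction with $A$ as above but tracking the $E_1$-weight as the quantity being driven up. So the proof is: normalize; invoke Corollary \ref{path} with $A = \mathbf{1}_{E_1\cup E_2}$ and $k=n-k_3$ to get connectivity of $G_3$; inside $G_3$, run the exchange argument of Lemma \ref{largepermutationmatrices} starting from an $E_1$-fair matching, at each step using a $shift_j$ that does not break the "$\le k_3$ in $E_3$" constraint, to arrive at a vertex of $G_3$ with $\ge k_1$ edges of $E_1$.
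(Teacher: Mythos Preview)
Your approach has a genuine gap. You correctly observe that Corollary~\ref{path}, applied to the indicator matrix of $E_1\cup E_2$ with $k=n-k_3$, shows that the set $G_3$ of perfect matchings with $|M\cap E_3|\le k_3$ is $\sim$-connected. But connectivity of $G_3$ is not what the lemma asks for: you need the \emph{existence} of a single matching in $G_3$ that also has $|M\cap E_1|\ge k_1$, and connectivity says nothing about that. Your attempt to ``run the exchange argument of Lemma~\ref{largepermutationmatrices}'' while maintaining both constraints is not justified: that lemma guarantees, at each step, a $shift_j$ preserving \emph{one} linear lower bound $\sum_i a_{i\sigma(i)}\ge k$; it gives no control over a second quantity. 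There is no reason the $j$ it produces (chosen to keep $|M\cap E_3|\le k_3$) should refrain from decreasing $|M\cap E_1|$, nor vice versa. The intermediate-value sketch has the same problem: along a transposition path from $M^\ast$ to $M_1$, both $|N\cap E_1|$ and $|N\cap E_3|$ move, and there is no step at which you can certify both inequalities hold. (The normalization $|E_i|=k_in$ is also not a free reduction for this lemma; the ceilings do not behave that way.)

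The paper's proof is entirely different and much shorter, with no topology or connectivity: let $H$ be the bipartite graph on edge set $E_1\cup E_2$. By K\"onig's edge-coloring theorem together with a standard balancing via alternating paths, $H$ has a proper $n$-edge-coloring in which every color class has size $\lfloor |E(H)|/n\rfloor$ or $\lceil |E(H)|/n\rceil$. By pigeonhole some color class $C$ contains at least $\lceil |E_1|/n\rceil$ edges of $E_1$. Since $|E(H)|=n^2-|E_3|$, this $C$ is a matching of size at least $n-\lceil |E_3|/n\rceil$, entirely inside $E_1\cup E_2$; completing $C$ arbitrarily to a perfect matching $M$ of $K_{n,n}$ adds at most $\lceil |E_3|/n\rceil$ edges, so $|M\cap E_3|\le \lceil |E_3|/n\rceil$, while $|M\cap E_1|\ge |C\cap E_1|\ge \lceil |E_1|/n\rceil$.
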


\begin{proof}
Let $H$ be the graph with the edge set $E_1 \cup E_2$.
K\"onig's edge coloring theorem, combined with an easy alternating paths argument,
yields that  $H$ can be edge colored with $n$ colors in a way that each color class is of size
either $\left\lfloor \frac{|E(H)|}{n} \right\rfloor$ or $\left\lceil \frac{|E(H)|}{n} \right\rceil$. Clearly, at least one of these classes
contains at least $\frac{|E_1|}{n}$ edges from $E_1$. A matching with the desired property can be obtained by completing this color class in any way we please to a perfect matching of $K_{n, n}$.
\end{proof}

In fact,  a stronger property may hold:

\begin{conjecture}
\label{lopsided}
Let  $G=(V,E)$ be a bipartite graph with maximal degree $\Delta$ and let $f : E \to \{1,2,3,\ldots, k\}$ for some positive integer $k$.
Then there exists a matching $M$ in $G$ such that every number $j \in \{1,2,3,\ldots, k\}$ satisfies

$$ |\{e \in M \mid f(e) \leq j\}| \geq \left\lfloor \frac{|\{e \in E : f(e) \leq j\}|}{\Delta} \right\rfloor $$
\end{conjecture}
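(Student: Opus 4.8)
Since $G$ is bipartite, K\"onig's edge-colouring theorem gives that $E(G)$ is the union of $\Delta$ matchings and of no fewer; hence the matching complex of $G$ has $\beta=\Delta$, and a proper edge-colouring of $G$ with $\Delta$ colours is exactly a partition $E(G)=N_1\,\dot\cup\,\cdots\,\dot\cup\,N_\Delta$ into matchings. Write $E_{\le j}=\{e:f(e)\le j\}$ and $a_j=|E_{\le j}|$, so $E_{\le 1}\subseteq\cdots\subseteq E_{\le k}=E$ and the conjecture asks for a matching $M$ with $|M\cap E_{\le j}|\ge\lfloor a_j/\Delta\rfloor$ for every $j$. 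It therefore suffices to exhibit, in some $\Delta$-edge-colouring of $G$, a single colour class $N_t$ that is \emph{prefix-fair}: $|N_t\cap E_{\le j}|\ge\lfloor a_j/\Delta\rfloor$ for all $j$ simultaneously. Note that for a fixed $j$ some class is automatically fair, by averaging $\sum_t|N_t\cap E_{\le j}|=a_j$; the whole content of the conjecture is that one class can be made good for all prefixes at once.

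\textbf{Main line of attack: a prefix-balanced edge-colouring.} The plan is to start from an arbitrary $\Delta$-edge-colouring and rebalance it, generalising the argument behind Lemma~\ref{lopsided3} (where a balanced colouring of $E_1\cup E_2$ is obtained and the class richest in $E_1$ is then taken for free). Here I would iterate the following move: while some prefix $E_{\le j}$ is unbalanced, i.e.\ there are colours $t,s$ with $|N_t\cap E_{\le j}|\ge|N_s\cap E_{\le j}|+2$, examine the union $N_t\cup N_s$ (a disjoint union of paths and even cycles) and look for an alternating substructure whose recolouring transfers one edge of $E_{\le j}$ from $N_t$ to $N_s$ while keeping the colouring proper. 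The goal state is a colouring in which $|N_t\cap E_{\le j}|\in\{\lfloor a_j/\Delta\rfloor,\lceil a_j/\Delta\rceil\}$ for \emph{every} colour $t$ and \emph{every} $j$; in such a colouring every class is prefix-fair and we are done. One must check that the moves can be ordered so that improving one prefix does not undo the balance of another (a suitable potential, e.g.\ $\sum_j\sum_t\big(|N_t\cap E_{\le j}|-a_j/\Delta\big)^2$, is the natural candidate to decrease).

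\textbf{Two fallback routes.} (i) Induction on $k$: delete the top class, set $G^-=(V,E_{\le k-1})$ with $\Delta^-=\Delta(G^-)\le\Delta$, and apply the statement to $G^-$ to get a matching $M^-$ with $|M^-\cap E_{\le j}|\ge\lfloor a_j/\Delta^-\rfloor\ge\lfloor a_j/\Delta\rfloor$ for $j\le k-1$; it then suffices to enlarge $M^-$ to a matching $M$ of $G$ with $M\setminus M^-\subseteq f^{-1}(k)$ and $|M|\ge\lfloor |E|/\Delta\rfloor$, since then $M\cap E_{\le j}=M^-\cap E_{\le j}$ for $j<k$. (ii) Matroid intersection: matchings of $G$ are the common independent sets of the two partition matroids on $E$ given by the left- and right-endpoint partitions with all capacities $1$, and $\Delta=\beta$ is the covering number of their intersection; the conjecture becomes an instance of ``a common independent set of two matroids hitting each set of a chain in at least $\lfloor\,\cdot\,/\beta\rfloor$ elements,'' which is the ``intersection of matroids'' candidate class from the introduction, and one would try a deficiency/exchange argument in the spirit of the proof of Theorem~\ref{2delta}, now carrying the chain lower bounds along.

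\textbf{The main obstacle.} In every route the hard point is the same: simultaneity over all prefixes. Averaging produces a good class per prefix but no common class; the clean rebalancing move (flipping an entire alternating component of $N_t\cup N_s$) changes the total size and the various $E_{\le j}$-counts in a coupled way that is awkward to control, so one must find sub-path moves that touch only edges of $E_{\le j}$ without breaking properness at the cut ends; and in the inductive route an arbitrary $M^-$ may occupy exactly the vertices one needs free in $f^{-1}(k)$, so the induction hypothesis has to be strengthened to deliver an $M^-$ that is both prefix-fair for $G^-$ and extendable, and formulating a workable such strengthening is itself the crux. I expect a correct proof to hinge on one decisive construction — either the existence of a consistently balanced edge-colouring (Plan A), or a potential-decreasing exchange in the matroid-intersection picture — rather than on routine bookkeeping.
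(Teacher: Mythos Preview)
The statement you are trying to prove is labelled \emph{Conjecture} in the paper, and the paper does not prove it; immediately after stating it the authors only remark that the condition need only be checked for $j<k$ and that the case $G=K_{6,6}$ has been verified elsewhere. So there is no ``paper's own proof'' to compare against: the result is open, and what you have written is not a proof either but an outline of three possible strategies with the difficulties honestly flagged at the end.

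One concrete issue with your Plan~A is worth recording. Your intended \emph{goal state}---a proper $\Delta$-edge-colouring in which $|N_t\cap E_{\le j}|\in\{\lfloor a_j/\Delta\rfloor,\lceil a_j/\Delta\rceil\}$ for every colour $t$ and every $j$---need not exist, so it cannot serve as the target of a terminating rebalancing procedure. Take $G=K_{2,2}$, so $\Delta=2$, and let the two edges of one perfect matching have $f$-value $1$ and the two edges of the other perfect matching have $f$-value $2$. The only proper $2$-edge-colourings of $K_{2,2}$ are the two perfect matchings themselves, and for $j=1$ one class meets $E_{\le 1}$ in $2$ edges and the other in $0$, while $\lfloor a_1/\Delta\rfloor=\lceil a_1/\Delta\rceil=1$. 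Thus no balanced colouring exists, although the conjecture itself is trivially satisfied by taking $M$ to be the perfect matching contained in $E_{\le 1}$. Your potential-function idea may still be salvageable, but the stopping criterion must be weaker than full balance across all prefixes.

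For your inductive route~(i), the obstruction you name is real and not cosmetic: the matching $M^-$ produced for $G^-$ can saturate precisely the vertices needed to pick up edges of $f^{-1}(k)$, and there is no evident way to strengthen the induction hypothesis without essentially re-asserting the conjecture. In short, none of the three plans closes, which is consistent with the paper leaving the statement as a conjecture.
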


Clearly, we only need to see to it that the condition holds for  $j<k$.

In \cite{BLS} this conjecture was proved for $G=K_{6,6}$.\\

We shall say that a perfect matching $F$ has property $i^{(+)}$  if $|F \cap E_i| \ge k_i$,
 property $i^{(++)}$  if $|F \cap E_i| > k_i$,
and  property $i^{(-)}$  if $|F \cap E_i| \le k_i$.

\begin{figure}[h!]
\begin{center}
\label{triangle}
\includegraphics[scale=0.3]{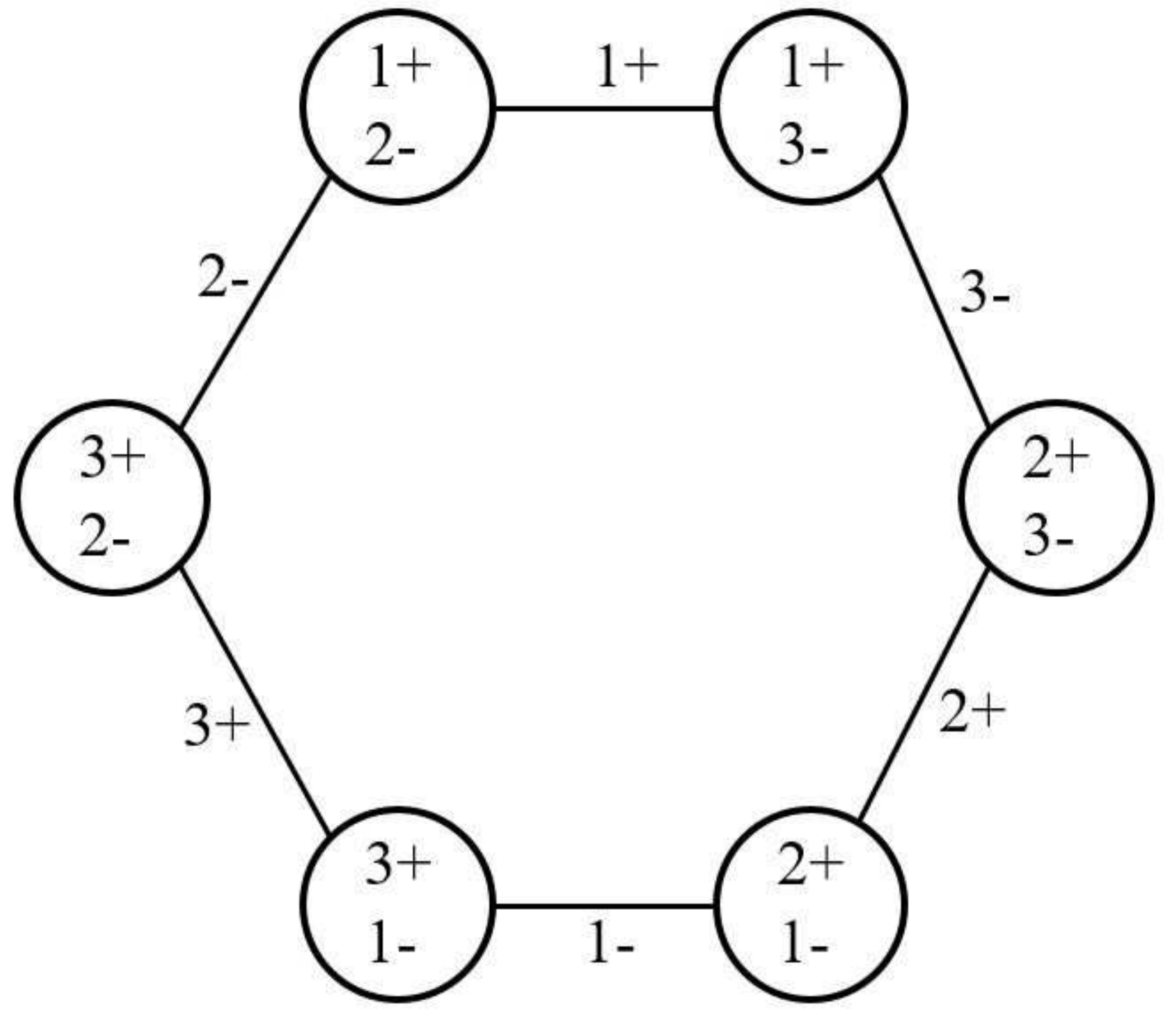}
\end{center}
\caption{}
\label{fig13}
\end{figure}

\begin{lemma}
\label{hexagonboundary}
There exists a triangulation of the boundary of a hexagon,
and an assignment of a perfect matching $M_v$ and a color $i_v \in \{1,2,3\}$ to each vertex $v$ of the triangulation, such that
$M_v$ has property $i_v^{(++)}$ and the coloring satisfies the conditions of Lemma \ref{hexagonsperner}.
\end{lemma}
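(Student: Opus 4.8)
The plan is to realize the six sides of the hexagon as $\sim$-paths lying inside three connected ``under-representation'' sets and to read every colour off the representation defect; the requirement that $M_v$ have property $i_v^{(++)}$ will then be automatic.

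By the reductions following the statement of Theorem \ref{equirep=3} we may assume $|E_i|=k_in$ with $1\le k_i\le n-2$ and that no perfect matching $F$ has $|F\cap E_i|=k_i$ for all $i$. For a perfect matching $F$ write $\delta_i(F)=|F\cap E_i|-k_i$, so $\sum_i\delta_i(F)=0$ and, by the negation hypothesis, $(\delta_1(F),\delta_2(F),\delta_3(F))\ne(0,0,0)$ for every $F$. Put $\cg_i=\{F:\ |F\cap E_i|\le k_i\}=\{F:\ |F\cap(E(K_{n,n})\setminus E_i)|\ge n-k_i\}$. Applying Corollary \ref{path} with $A$ the indicator matrix of $E(K_{n,n})\setminus E_i$ and $k=n-k_i$ (so $\sum_{p,q}a_{pq}=(n-k_i)n=kn$ and $1\le k\le n-1$) shows that the subgraph of $\cc$ spanned by $\cg_i$ is connected; it is non-empty since $|F\cap E_i|$ has average $k_i$ over a $1$-factorization. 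Two observations drive the colouring: \textbf{(a)} if $F\in\cg_i$ then $\delta_j(F)+\delta_\ell(F)=-\delta_i(F)\ge0$ for the other two indices, and since the triple is non-zero, $\delta_j(F)>0$ or $\delta_\ell(F)>0$, so $F$ can be assigned a colour in $\{1,2,3\}\setminus\{i\}$ for which it has property $(++)$; \textbf{(b)} if $F\in\cg_j\cap\cg_\ell$ then $\delta_j(F),\delta_\ell(F)\le0$, hence $\delta_i(F)>0$, so $F$ must be coloured $i$ and has property $i^{(++)}$.

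Assume for the moment that $\cg_i\cap\cg_j\ne\emptyset$ for every pair, and fix pivots $X_{12}\in\cg_1\cap\cg_2$, $X_{23}\in\cg_2\cap\cg_3$, $X_{31}\in\cg_3\cap\cg_1$. By connectedness choose a $\sim$-path $q_1$ in $\cg_1$ from $X_{31}$ to $X_{12}$, a $\sim$-path $q_2$ in $\cg_2$ from $X_{12}$ to $X_{23}$, and a $\sim$-path $q_3$ in $\cg_3$ from $X_{23}$ to $X_{31}$. Let the hexagon degenerate to the triangle on $X_{12},X_{23},X_{31}$, with sides, in cyclic order $p_1,\dots,p_6$, equal to $p_1=q_1$, $p_2=\{X_{12}\}$, $p_3=q_2$, $p_4=\{X_{23}\}$, $p_5=q_3$, $p_6=\{X_{31}\}$; consecutive matchings on each side are $\sim$-related, so this is a legitimate triangulation of the hexagon boundary inside $\cc$. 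Colour every matching on a side contained in $\cg_i$ by ``the colour $j\ne i$ with $\delta_j>0$ (smaller index if both are positive)''; by \textbf{(a)} this is well defined with property $(++)$, and by \textbf{(b)} it gives $X_{12}$ colour $3$, $X_{23}$ colour $1$, $X_{31}$ colour $2$, consistently with whichever incident side one uses to compute it. Then $p_1=q_1\subseteq\cg_1$ has no vertex of colour $1$, $p_3\subseteq\cg_2$ none of colour $2$, $p_5\subseteq\cg_3$ none of colour $3$, while the one-vertex sides $p_2,p_4,p_6$ contain no edge at all; so all six conditions of Lemma \ref{hexagonsperner} hold, and Lemma \ref{hexagonboundary} follows.

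It remains to produce the pivots, and this is the \textbf{main obstacle}: one must show $\cg_i\cap\cg_j\ne\emptyset$, i.e. that some perfect matching $F$ satisfies $|F\cap E_i|\le k_i$ \emph{and} $|F\cap E_j|\le k_j$ (such an $F$ then has $|F\cap E_\ell|>k_\ell$). Averaging gives each constraint in isolation --- a matching minimising $|F\cap E_i|$ has at most $k_i$ edges in $E_i$ --- but the $E_i$-minimiser can be badly lopsided toward $E_j$, and the naive intermediate-value argument along a $\sim$-path does not close, since one transposition can shift a defect by $2$. I would argue from a maximum matching $M_0$ of the subgraph $E_\ell$: since $|F\cap E_\ell|$ averages $k_\ell$ one has $\nu(E_\ell)\ge k_\ell$, and one may assume $\nu(E_\ell)>k_\ell$ for every $\ell$ (if $\nu(E_\ell)=k_\ell$ then every matching has exactly $k_\ell$ edges in $E_\ell$, and a transposition/mean-value argument produces a matching with $|F\cap E_i|\in\{k_i-1,k_i,k_i+1\}$ for all $i$, settling Theorem \ref{equirep=3} directly). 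Then $M_0$ misses $r:=n-\nu(E_\ell)<k_i+k_j$ rows and $r$ columns, there is no $E_\ell$-edge between a missed row and a missed column (else $M_0$ is not maximum), so any completion of $M_0$ uses only $E_i\cup E_j$-edges between the missed vertices --- a perfect matching of a two-coloured $K_{r,r}$ with $r<k_i+k_j$ --- and one needs such a matching using at most $k_i$ red and at most $k_j$ blue edges. Securing this balanced completion (perhaps by choosing $M_0$ among the maximum matchings of $E_\ell$ so as to make the two-coloured graph on the missed vertices as balanced as possible, or by invoking Lemma \ref{lopsided3}) is the delicate point. Once the pivots are available the construction above finishes the lemma; the hexagon boundary it produces is later filled to a triangulated disk using the simple connectivity of $\cc$ (Lemma \ref{simplyconnected}), after which Lemma \ref{hexagonsperner} applies.
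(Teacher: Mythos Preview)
Your reduction to a triangle is elegant, but the gap you yourself flag --- the existence of the pivots $X_{ij}\in\cg_i\cap\cg_j$ --- is the whole difficulty, and your sketch does not close it. A matching in $\cg_i\cap\cg_j$ is one with \emph{two} simultaneous $(-)$ properties, equivalently with $\delta_\ell>0$ for the third index; neither averaging nor Lemma~\ref{lopsided3} yields this. Lemma~\ref{lopsided3} only produces a matching with one $(+)$ and one $(-)$ property (say $\ell^{(+)}$ and $i^{(-)}$), and nothing prevents the remaining coordinate $\delta_j$ from being positive. Your attempt via a maximum matching of $E_\ell$ runs into the same wall: once $r=n-\nu(E_\ell)$ vertices are left uncovered, the two-coloured $K_{r,r}$ on them can be arbitrarily lopsided (all of $E_i$, say), so a completion respecting both caps $k_i,k_j$ need not exist. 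The intermediate-value idea along a $\sim$-path in $\{\delta_3\ge0\}$ between a $(\delta_1<0,\delta_2>0)$ point and a $(\delta_1>0,\delta_2<0)$ point also fails: a single $3$-cycle can take $(\delta_1,\delta_2,\delta_3)$ from $(-1,1,0)$ straight to $(1,-1,0)$, skipping the region $\{\delta_1\le0,\ \delta_2\le0\}$ entirely.

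The paper sidesteps this obstacle by \emph{not} collapsing the hexagon. Its six corners are exactly the six matchings supplied by Lemma~\ref{lopsided3}, each carrying one $(+)$ and one $(-)$ property (e.g.\ $1^{(+)},3^{(-)}$); no double-$(-)$ pivot is ever needed. The six sides then alternate: three sides $p_1,p_3,p_5$ lie in the under-representation sets $\cg_1,\cg_2,\cg_3$ (your construction, giving the ``no colour $i$'' conditions), while the other three sides $p_2,p_4,p_6$ lie in the over-representation sets $\{\delta_i\ge0\}$, which are connected by the same Corollary~\ref{path}. On a $(+)$ side the hexagonal Sperner condition is only an \emph{edge} condition, not a vertex condition, and this slack is exactly what absorbs the difficulty you could not discharge at your triangle's corners. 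So the missing idea is structural: keep all six sides, let the $(+)$ paths bridge between consecutive $(-)$ paths, and use Lemma~\ref{lopsided3} to manufacture corners that belong to one $(+)$ set and one $(-)$ set rather than to two $(-)$ sets.
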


\begin{proof}
By Lemma \ref{lopsided3} there exists a perfect matching $M$ with properties $1^{(+)}$ and $3^{(-)}$.
We assign it to one vertex of the hexagon.
By permuting the roles of $E_1, E_2, E_3$ we can find six such perfect matchings and assign them to the six vertices of the hexagon as in Figure~\ref{fig13}.

By Corollary \ref{path}, we can fill the path between the two permutations with property $i^{(-)}$
in a way that all perfect matchings in the path have property $i^{(-)}$. Similarly, we can fill the path between the two permutations with property $i^{(+)}$.
For each vertex $v$ we assign a color $i_v$ such that $M_v$ has property $i_v^{(++)}$.
If Lemma \ref{hexagonsperner} does not hold, then without loss of generality
we have two perfect matchings $M_1 \sim M_2$, where $M_1$ has properties $3^{(+)}$ and $1^{(++)}$ and $M_2$ has properties
$3^{(+)}$ and $2^{(++)}$. This yields Lemma \ref{hexagonboundary}.
\end{proof}



Since $\cc$ is simply connected, we can extend the mapping we got in Lemma \ref{hexagonboundary} to a triangulation of the hexagon.
Applying Lemma \ref{hexagonsperner} we obtain a triangle in the triangulation whose vertices are colored 1, 2 and 3.
This means that there exist $\sigma_1, \sigma_2, \sigma_3 \in S_n$, pairwise $\sim$ related and fairly representing
$E_1, E_2, E_3$ respectively.

\subsection{Proof of Theorem \ref{equirep=3}}

We form a matrix $A = (a_{i j })_{i,j \le n}$, where $a_{ij}=p~~(p=1,2,3)$ if the edge $ij$ belongs to $E_p$. \\


For each $\ell \in \{1,2,3\}$
and $\sigma \in S_n$ we write $d_\ell(\sigma) = |\{i : a_{i \sigma(i)} = \ell\}| - k_\ell$.\\

\begin{lemma}
Suppose that the triple $\{\sigma_1, \sigma_2, \sigma_3\}$ is in $\cc$,
and that $d_\ell(\sigma_\ell) > 0$ for each $\ell \in \{1,2,3\}$.
Then there exists $\sigma \in S_n$ with $|d_\ell(\sigma)| \leq 1$ for each $\ell \in \{1,2,3\}$.
\end{lemma}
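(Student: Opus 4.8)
The three permutations $\sigma_1,\sigma_2,\sigma_3$ are pairwise within Hamming distance $3$, and each $\sigma_\ell$ over-represents $E_\ell$ (i.e.\ $d_\ell(\sigma_\ell)>0$). The plan is to walk from $\sigma_1$ to $\sigma_2$ to $\sigma_3$ (and perhaps back) by single transpositions, track how $d_1,d_2,d_3$ change along the way, and locate a permutation where all three are small. Each transposition changes each $d_\ell$ by at most $2$; but since $\sum_\ell (d_\ell+k_\ell) = n$ is constant, the three quantities $d_1,d_2,d_3$ sum to the constant $n-k_1-k_2-k_3$, so they cannot all be large or all small simultaneously in an uncontrolled way — this conservation law is what we will exploit.

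The first step is to make the $\sim$-relation concrete: if $\tau$ is obtained from $\sigma$ by one transposition then $d(\sigma,\tau)=2\le 3$, so $\sigma\sim\tau$; and any two of the $\sigma_\ell$ differing in a distance-$\le 3$ way can be joined by at most two transpositions, each step keeping us inside a single simplex of $\cc$. So I would fix a short transposition-path $\sigma_1 = \pi_0,\pi_1,\dots,\pi_t = \sigma_2$ (of length $t\le 2$), noting $|d_\ell(\pi_{s+1})-d_\ell(\pi_s)|\le 2$ for each $\ell$ and each $s$. The second step is a discrete intermediate-value argument: we start with $d_1(\sigma_1)>0$ and want to reach a point where $d_1\le 1$. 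If $d_1$ ever drops below $1$ along one of our paths we stop; otherwise, combine the fact that $d_2(\sigma_2)>0$, $d_3(\sigma_3)>0$ with the conservation law $d_1+d_2+d_3 = n-k_1-k_2-k_3$ to force some $\pi_s$ on the triangle's boundary where two of the three $d_\ell$ are $\le 1$, and then a single further transposition (or a short detour through $\sigma_3$, which is $\sim$-related to the current vertex) fixes the third. Because each step changes a $d_\ell$ by at most $2$ and by parity (a transposition either fixes or flips whether a given entry lies in $E_\ell$ for each of its two moved rows) one can, with care, arrange that when one $d_\ell$ passes from positive to non-positive it lands exactly at $0$ or $-1$, and symmetrically for over-shooting — keeping everything in the band $|d_\ell|\le 1$.

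The main obstacle, I expect, is the bookkeeping in the last paragraph: controlling \emph{all three} residuals simultaneously rather than one at a time. A single transposition swaps two values $a_{i\sigma(i)}, a_{j\sigma(j)}$ among the three labels $\{1,2,3\}$, and in the worst case this can push two of the $d_\ell$'s in bad directions at once; so the naive one-coordinate intermediate-value argument needs to be replaced by an argument on the lattice-point path of the vector $(d_1,d_2,d_3)$ constrained to the plane $d_1+d_2+d_3 = \text{const}$. The key will be to show that whenever we are at a vertex with, say, $d_1\ge 2$, there is a choice of transposition (using that we have slack coming from one of $\sigma_2,\sigma_3$ and that labels come from a $3$-element set) that decreases $d_1$ by $1$ or $2$ without driving $d_2$ or $d_3$ outside $[-1,1]$ — i.e.\ a careful case analysis on which pairs of labels are available to swap, analogous to (but simpler than) the case analysis carried out for the two-part theorem. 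Once that local move is established, a straightforward descent on $\max_\ell|d_\ell|$ terminates at the desired $\sigma$.
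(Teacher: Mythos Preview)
Your proposal is a plan, not a proof: the step you yourself flag as ``the main obstacle'' is the whole difficulty, and you do not resolve it. The local-move claim---that from any $\sigma$ with $d_1(\sigma)\ge 2$ one can find a transposition lowering $d_1$ while keeping $d_2,d_3\in[-1,1]$---is not established, and can fail at an arbitrary $\sigma$: if $(d_1,d_2,d_3)=(2,-1,-1)$ and the only transpositions that lower $d_1$ happen to replace a label multiset $\{1,3\}$ by $\{2,2\}$, you land at $(1,1,-2)$. Nothing in your sketch explains how the presence of $\sigma_2,\sigma_3$ rules out such dead ends, so the descent on $\max_\ell|d_\ell|$ does not go through as written. (Incidentally, since $k_1+k_2+k_3=n$, your conservation law is simply $d_1+d_2+d_3=0$.)

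The paper's argument takes a different route and avoids walks and intermediate-value reasoning altogether. It records the data in the $3\times 3$ matrix $B=(b_{ij})$ with $b_{ij}=d_i(\sigma_j)$ and exploits three constraints simultaneously: each column sums to $0$; the diagonal entries are positive; any two entries in the same row differ by at most $3$ (from the Hamming bound). These already force every entry of $B$ into $\{-2,\dots,4\}$. Assuming for contradiction that none of $\sigma_1,\sigma_2,\sigma_3$ works (so every column has an entry outside $\{-1,0,1\}$), a short case split pins $B$ down to essentially two explicit patterns. In each, the $\sigma_j$ differ by a specific $3$-cycle on three coordinates and the corresponding $3\times 3$ block of $A$ is completely determined; one then either exhibits a suitable $\sigma$ directly (a single transposition applied to $\sigma_1$) or finishes with a counting contradiction on the number of label-$3$ entries in $A$. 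The moral is that the hypotheses constrain the situation so tightly that no path argument is needed---the answer is read off from $B$ after a finite case analysis.
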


Since the existence of such $\sigma_1, \sigma_2, \sigma_3$ follows from
 Lemmas \ref{hexagonsperner}, \ref{hexagonboundary} and \ref{simplyconnected}, this will finish the proof of Theorem
\ref{equirep=3}.

\begin{proof}
Define a   $3 \times 3$ matrix $B = (b_{i j})$ by $b_{i j} = d_i(\sigma_j)$.
We know that the diagonal entries in $B$ are positive, the sum in each column is zero,
and any two entries in the same row differ by at most 3.
This means that the minimal possible entry in $B$ is -2.
We may assume each column has some entry not in $\{-1, 0, 1\}$.

Let us start with the case that all of the diagonal entries of $B$ are at least 2.
This implies that all off-diagonal entries are at least -1.
Since each column must sum up to zero, we must have

$$B = \left(
\begin{array}{rrr}
2 & -1 & -1 \\
-1 & 2 & -1 \\
-1 & -1 & 2
\end{array}
\right)$$

This implie that the distance between any two of $\sigma_1, \sigma_2, \sigma_3$ is exactly 3, and without loss of generality
$\sigma_1 = I$, $\sigma_2 = (1 2 3)$, $\sigma_3 = (1 3 2)$, and the matrix $A$ has the form

$$A = \left(
\begin{array}{cccccc}
1 & 2 & 3 & * & \ldots & *\\
3 & 1 & 2 & * & \ldots & *\\
2 & 3 & 1 & * & \ldots & *\\
* & * & * & * & \ldots & *\\
\vdots & \vdots & \vdots & \vdots & \ddots & \vdots \\
* & * & * & * & \ldots & *
\end{array}
\right)$$

We can now take $\sigma = (1 2)$ and we are done.



We are left with the case that some diagonal entry of $B$ is 1.
Without loss of generality $b_{1 1} = 1$.
We also assume without loss of generality that $b_{2 1} \leq b_{3 1}$.
Since the first column must sum up to zero, we have
$b_{2 1} + b_{3 1} = -1$, and thus
$-0.5 = 0.5(b_{2 1} + b_{3 1}) \leq b_{31}  = -1 - b_{21} \leq 1$.
In other words, either $b_{2 1} = -1$ and $b_{3 1} = 0$
or $b_{2 1} = -2$ and $b_{3 1} = 1$.
In the first case we can just take $\sigma = \sigma_1$ and we are done.
Therefore we assume the second case.

$$B = \left(
\begin{array}{rrr}
1 & * & * \\
-2 & * & * \\
1 & * & *
\end{array}
\right)$$

Since $d_3(\sigma_1)> 0$, we may assume $\sigma_3 = \sigma_1$, and due to the -2 entries in the second row,
we must have $b_{2 2} = 1$.  We now get

$$B = \left(
\begin{array}{rrr}
1 & * & 1 \\
-2 & 1 & -2 \\
1 & * & 1
\end{array}
\right)$$

Without loss of generality $b_{1 2} \leq b_{3 2}$ and by arguments similar to the above we can fill the second column

$$B = \left(
\begin{array}{rrr}
1 & -2 & 1 \\
-2 & 1 & -2 \\
1 & 1 & 1
\end{array}
\right)$$







The distance between $\sigma_1$ and $\sigma_2$
is exactly 3, so
without loss of generality $\sigma_1 = I$ and $\sigma_2 = (1 2 3)$.
In order to achive the values of $b_{1 2} = -2, b_{1 1} = 1, b_{2 1} = -2, b_{2 2} = 1$ we must have
$a_{i i} = 1$ and $a_{i \sigma_2(i)} = 2$ for each $i \in \{1,2,3\}$.

The only case in which none of the choices $\sigma = (1 2)$
or $\sigma = (2 3)$ or $\sigma = (1 3)$ works is if
$a_{1 3} = a_{2 1} = a_{3 2} = 3$, so once again we get

$$A = \left(
\begin{array}{cccccc}
1 & 2 & 3 & * & \ldots & *\\
3 & 1 & 2 & * & \ldots & *\\
2 & 3 & 1 & * & \ldots & *\\
* & * & * & * & \ldots & *\\
\vdots & \vdots & \vdots & \vdots & \ddots & \vdots \\
* & * & * & * & \ldots & *
\end{array}
\right)$$



We have $b_{3 1} = 1$ which means that 3 appears $k_3+1$ times on the diagonal.
Without loss of generality $a_{4 4} = a_{5 5} = \ldots = a_{k_3+4 \, k_3+4} = 3$.
In any of the following cases one can easily find some $\sigma \in S_n$
with $|d_\ell(\sigma)| \leq 1$ for each $\ell \in \{1,2,3\}$:
\begin{itemize}
\item If either $a_{i j} \neq 3$ or $a_{j i} \neq 3$ for some $i \in \{4, \ldots, k_3+4\}$ and $j \in \{1,2,3\}$.
\item If $a_{i j} \neq 3$ for some $i, j \in \{4, \ldots, k_3+4\}$
\item  If both $a_{i j} \neq 3$ and $a_{j i} \neq 3$ for some $i \in \{4, \ldots, k_3+4\}$ and $j \in \{ k_3+5, \ldots, n\}$.
\end{itemize}

If none of the above occurs then
$$k_3 n = |\{(i,j) : a_{i j} = 3\}| \geq 2 \cdot 3 \cdot (1+k_3) + (1+k_3)^2 + \frac{1}{2} \cdot 2 (k_3+1)(n-k_3-4) $$
which is a contradiction.
\end{proof}

\begin{remark}
After the above topological proof of Theorem \ref{equirep=3} was found,
a combinatorial proof was given in \cite{BLS}.
\end{remark}

{\bf Acknowledgement} The authors are grateful to Fr\'ed\'eric Meunier for pointing out an inaccuracy in a previous version of the paper.

\end{document}